\documentclass[a4paper,10pt]{amsart}

\usepackage[margin=1in]{geometry}
\usepackage{microtype}

\usepackage{amsmath,amssymb,amsthm,mathtools}

\usepackage{graphicx}
\usepackage{xcolor}
\usepackage{tikz}
\usetikzlibrary{positioning}
\usepackage{tikz-cd}
\usepackage{subcaption}
\usepackage{booktabs}
\usepackage{multicol}

\usepackage[
    backend=biber,
    style=numeric,
    sorting=nty,
    sortcites=true
]{biblatex}
\usepackage[
    colorlinks=true,
    linkcolor=blue,
    citecolor=red,
    urlcolor=blue
]{hyperref}

\theoremstyle{plain}
\newtheorem{theorem}{Theorem}[section]
\newtheorem{lemma}[theorem]{Lemma}
\newtheorem{proposition}[theorem]{Proposition}
\newtheorem{corollary}[theorem]{Corollary}
\newtheorem{conjecture}[theorem]{Conjecture}
\newtheorem{open}[theorem]{Open Problem}

\theoremstyle{definition}
\newtheorem{definition}[theorem]{Definition}
\newtheorem{remark}[theorem]{Remark}

\DeclareMathOperator{\Aut}{Aut}
\DeclareMathOperator{\Fix}{Fix}
\DeclareMathOperator{\End}{End}
\DeclareMathOperator{\REnd}{REnd}
\DeclareMathOperator{\RAut}{RAut}
\DeclareMathOperator{\Cong}{Cong}
\DeclareMathOperator{\Sym}{Sym}

\newcommand{\FixV}[1]{\Fix_{V}(#1)}

\newcommand{\id}{\mathrm{id}}

\title{On totally synchronizing graphs}

\subjclass[2020]{05C20, 05C25, 68Q70, 60J10, 68Q17}

\keywords{
Synchronizing automata;
Totally synchronizing graphs;
Graph congruences;
Lumpability;
Perron--Frobenius eigenvectors;
Graph automorphisms.
}

\author{Daniele D'Angeli$^{\ast}$} 
\address{$\ast$ Universit\`a Niccolo Cusano, Rome, Italy}
\email{daniele.dangeli@unicusano.it}

\author{Emanuele Rodaro$^{\S}$}
\address{$\S$ Politecnico di Milano, Milan, Italy}
\email{emanuele.rodaro@polimi.it}

\begin{document}

\begin{abstract}
A coloring of a finite $k$-out directed graph $G$ is viewed as a deterministic complete automaton with state set $V(G)$. The graph $G$ is called \emph{totally synchronizing} if every coloring is synchronizing.
We prove that total synchronization imposes strong restrictions on symmetry: if $G$ is strongly connected and totally synchronizing, then $\Aut(G)$ contains no semiregular element; in particular, if $|\Aut(G)|$ is divisible by a prime $p>k$, then $G$ is not totally synchronizing.
We then give general constructions of strongly connected $k$-out graphs with prescribed quotients and prescribed automorphism group that are \emph{not} totally synchronizing.
On the quotient side, we relate graph congruences to strong lumpability of the uniform random walk on $G$ and introduce \emph{totally simple} graphs, characterized by the absence of nontrivial congruences.
In this setting we obtain a Perron--Frobenius sufficient condition for total synchronization: a strongly connected non-lumpable graph whose integer Perron--Frobenius eigenvector admits at most one nontrivial equipartition is totally synchronizing.
Finally, we show that deciding whether a primitive $k$-out graph admits a non-synchronizing coloring is NP-complete, resolving an open problem of Gusev--Szyku{\l}a, and prove NP-completeness of deciding whether a graph admits a nontrivial Eulerian lumping.
\end{abstract}

\maketitle

\section{Introduction}
Let $G$ be a finite directed graph with vertex set $Q$ and edge set $E$, in which every vertex has out-degree $k$; such a graph is called a \emph{$k$-out graph}. A \emph{coloring} (or \emph{labeling}) of $G$ by an alphabet $\Sigma$ with $|\Sigma|=k$ is a map $\ell:E\to\Sigma$ such that, for every vertex $q\in Q$ and every letter $a\in\Sigma$, there is a unique edge leaving $q$ with label $a$. Equivalently, such a coloring defines a deterministic complete automaton $\mathcal A_\ell=(Q,\Sigma,\delta_\ell)$, $\delta_\ell(q,a)=q'
\ \text{iff}\ (q\xrightarrow{a}q')\in E.$
The automaton $\mathcal A_\ell$ is \emph{synchronizing} if there exists a word
$w\in\Sigma^*$ such that $|\delta_\ell(Q,w)|=1$; any such word is called a
\emph{reset} (or \emph{synchronizing}) word.
Synchronizing automata have been extensively studied, largely motivated by the
celebrated \v{C}ern\'y conjecture, which asserts that every synchronizing
automaton with $n$ states admits a reset word of length at most $(n-1)^2$
(see~\cite{Ce64}).
Over the years, this conjecture has stimulated a vast literature on
synchronizing automata, including algorithms for computing short reset words,
proofs of the conjecture for important classes of automata, and connections
with coding theory, symbolic dynamics, and the Road Coloring Problem; see, for
instance,
\cite{AlRo, BeBePe, Dubuc, Epp, Kari, Ro18Adv, SteinbEJC, Steinb, Trah,
trahtman2009, Vo_CIAA07, Don, BehJoh, Perrin_unamb_coded_shifts}.
For a recent survey of the classes of automata for which the \v{C}ern\'y
conjecture is known to hold, see~\cite{survey_volkov_2025}. Besides the \v{C}ern\'y conjecture, synchronization has also been extensively studied from the perspective of permutation groups through the theory of synchronizing groups. This line of research, developed by Ara\'ujo, Cameron, Steinberg and collaborators, has revealed deep connections between automata, permutation groups, graph endomorphisms and algebraic combinatorics; see, for instance,
\cite{AraujoCameronJCTB2014,AraujoEtAlPLMS2016,AraujoCameronSteinberg2017}.

The present paper approaches synchronization from a graph-theoretic viewpoint by regarding an automaton as a coloring of a $k$-out graph. This viewpoint is also natural from symbolic dynamics, since a coloring is precisely a complete right-resolving edge-labeling, and therefore determines a right-resolving presentation of a sofic shift; see, for instance, \cite{LM}. This perspective lies at the origin of the Road Coloring Problem of Adler, Goodwyn, and Weiss~\cite{adler1977}, which asked whether every \emph{primitive} $k$-out graph admits at least one synchronizing coloring. Trahtman’s theorem answers this affirmatively~\cite{trahtman2009}: primitivity (strong connectivity and aperiodicity) is exactly the right unlabeled condition guaranteeing the existence of a synchronizing labeling. See also Volkov’s survey~\cite{volkov2008}. In this paper we study a different aspect of the same setting. Rather than asking for the existence of \emph{some} synchronizing coloring, we ask to what extent synchronization is forced by the unlabeled directed skeleton itself. From the symbolic-dynamical viewpoint, each coloring $\ell$ yields a deterministic presentation $(G,\ell)$, equivalently a $1$--block factor of the edge shift of $G$, and varying $\ell$ amounts to varying the symbolic coding of the same underlying Markov shift. This suggests measuring how \emph{typical} synchronization is among all such codings. Let $\mathcal C(G)$ denote the set of all colorings of $G$. Following~\cite{GusevSzykula15}, we define the \emph{synchronizing ratio}
\[
\mathrm{SynRatio}(G)
:=
\frac{\bigl|\{\ell\in\mathcal C(G):\mathcal A_\ell\text{ is synchronizing}\}\bigr|}{|\mathcal C(G)|}.
\]
The main protagonists of this paper are the \emph{totally synchronizing} graphs introduced in~\cite{GusevSzykula15}: we say that $G$ is totally synchronizing if $\mathrm{SynRatio}(G)=1$, that is, if every coloring of $G$ is synchronizing. This is an extremal robustness property: synchronization is not a consequence of a special choice of labels, but an invariant of the underlying directed graph. The study of synchronizing ratios suggests that total synchronization should be far from exceptional. In~\cite{GusevSzykula15}, Gusev and Szyku{\l}a propose two complementary conjectures. The first predicts a uniform lower bound for primitive $k$-out graphs $\mathrm{SynRatio}(G)\ \ge\ \frac{k-1}{k},$ up to a single explicit small exception. The second predicts that for each fixed $k\ge 2$, a random primitive $k$-out graph on $n$ vertices is totally synchronizing with probability tending to $1$ as $n\to\infty$. In this sense, totally synchronizing graphs should represent the typical behaviour, while graphs supporting many non-synchronizing colorings should be rare. Somewhat paradoxically, despite this apparent abundance, a satisfactory structural characterization of totally synchronizing graphs seems out of reach at present.  Beyond sufficient criteria (for instance those based on Perron--Frobenius data in~\cite{GuPri}), there is currently no general description in purely graph-theoretic terms for the class of totally synchronizing graphs.

In this paper we continue the study of totally synchronizing $k$-out graphs from a graph-theoretic viewpoint.
We focus on two kinds of structure already present in the unlabeled graph: symmetries and quotients.
The symmetry part gives graph-level obstructions to total synchronization.
Our first main result shows that if $G$ is strongly connected and totally synchronizing, then $\Aut(G)$ contains no semiregular element (Theorem~\ref{theo: main}).
As an immediate consequence, if $G$ is strongly connected and $|\Aut(G)|$ is divisible by a prime $p>k$, then $G$ is not totally synchronizing (Corollary~\ref{cor:prime-divisor-greater-than-k}).
Our second main contribution goes in the opposite direction.
We give general constructions of strongly connected $k$-out graphs that are not totally synchronizing while allowing one to prescribe both a quotient graph and substantial symmetry.
More precisely, for any strongly connected base graph and any nontrivial finite group $H$, we construct a strongly connected refinement together with a congruence whose quotient is the base graph. The kernel of the induced action of $\Aut(G)$ on the vertex set consists precisely of the automorphisms that fix every vertex, and the corresponding quotient is isomorphic to $H$. In the parallel-free and loop-free case, one can arrange that $\Aut(G)\cong H$ (Corollary~\ref{cor:one-loop-per-vertex-parallel-free}).

The quotient part is expressed through lumpability.
Every $k$-out graph carries a natural uniform random walk, and graph congruences are precisely the strong lumpings of this Markov chain.
We introduce \emph{totally simple} graphs, meaning graphs whose every coloring is a simple automaton, and prove that total simplicity is equivalent to the absence of nontrivial lumpable partitions (Theorem~\ref{theo: lump=to simple}).
This gives a structural setting in which quotient obstructions have been removed.
As an illustration, we prove that the underlying $2$-out graphs of the \v{C}ern\'y automata are totally simple (Proposition~\ref{prop:cerny-underlying-totally-simple}).
We also prove a slight extension of a Perron--Frobenius criterion of Friedman and Gusev--Pribavkina: if $G$ is strongly connected and non-lumpable, and its integer-normalized Perron--Frobenius eigenvector admits at most one nontrivial equipartition, then $G$ is totally synchronizing (Theorem~\ref{theo: t-s for unique partition}).

Finally, we address algorithmic questions suggested by the preceding structural results.
Our main decision result is that deciding whether a primitive $k$-out graph admits a non-synchronizing coloring is NP-complete (Theorem~\ref{thm:NTS-NPcomplete}), thereby resolving an open problem posed in~\cite{GusevSzykula15}.
We also prove that deciding whether a graph admits a nontrivial Eulerian lumping is NP-complete (Theorem~\ref{thm:eulerian-lumping-npcomplete}).
On the counting side, we establish a reduction from counting labeled $1$-factorizations of $k$-regular bipartite graphs to counting non-synchronizing colorings of $k$-out graphs (Proposition~\ref{prop:encode-1fact-into-nsc}).

\section{Preliminaries}\label{sec: preliminaries}
A \emph{directed multigraph}, or \emph{quiver}, is a tuple $G=(V,E,s,t)$ consisting of two finite sets $V$ and $E$
(sometimes denoted $V(G)$ and $E(G)$) and two maps $s,t:E\to V$.
The elements of $V$ are called \emph{vertices} and the elements of $E$ are called \emph{edges}.
For an edge $e\in E$, the vertex $s(e)$ is its \emph{source} and $t(e)$ its \emph{target}. For subsets $A,B\subseteq V$, we define
\[
\delta_{A}(B)\;:=\;\{\,e\in E:\ s(e)\in A \text{ and } t(e)\in B\,\}.
\]
When $A$ or $B$ is a singleton we omit braces, e.g.\ $\delta_A(v):=\delta_A(\{v\})$ and $\delta_u(B):=\delta_{\{u\}}(B)$. Throughout the paper, we refer to directed multigraphs/quivers simply as \emph{graphs}.
\begin{definition}[homomorphism, epimorphism, monomorphism]
    A \emph{homomorphism} $\varphi: G \to H$ of graphs $G = (V, E, s, t)$ and $H = (W, F, s, t)$ is a pair $\varphi = (\varphi_{v}, \varphi_{e})$ of mappings $\varphi_{v}: V \to W$ and $\varphi_{e}: E \to F$ commuting with $s,t$, that is, $s(\varphi_{e}(e)) = \varphi_{v}(s(e))$ and $t(\varphi_{e}(e)) = \varphi_{v}(t(e))$ hold for every edge $e \in E$. The homomorphism $\varphi$ is called an \emph{epimorphism} if both $\varphi_{v}$ and $\varphi_{e}$ are surjective. It is a \emph{monomorphism} if both $\varphi_{v}$ and $\varphi_{e}$ are injective. If it is both a monomorphism and epimorphism it is called an \emph{isomorphism}. 
    
    Given an equivalence relation $\rho$ on $V$, the quotient digraph $G/\rho$ is defined as follows: its vertex set is $V(G/\rho)=V(G)/\rho$, and for each pair of classes $[u]_\rho,[v]_\rho$ there are exactly
\[
\max_{p\in [u]_\rho}\bigl|\{ e \in E : s(e)=p,\ t(e)\in [v]_\rho \}\bigr|
\]
parallel edges from $[u]_\rho$ to $[v]_\rho$.
\end{definition}
 A graph $G=(V,E,s,t)$ is called \emph{$k$-out} if every vertex has exactly $k$ outgoing edges, that is, $|\delta_u(V)|=k$ for all $u\in V$. These graphs are exactly the underlying graphs of deterministic and complete (semi)automata. We recall that a (complete) deterministic \emph{automaton} is a triple $\mathcal A=(Q,\Sigma,\delta)$ where $Q$ is a finite
set of states, $\Sigma=[k]:=\{1,\ldots, k\}$ a finite alphabet, and $\delta:Q\times\Sigma\to Q$ the transition function.
When $\delta$ is clear we write $p\cdot a:=\delta(p,a)$ for $p\in Q$, $a\in\Sigma$, and extend to words
$u=a_1\cdots a_\ell\in\Sigma^*$ by $p\cdot u:=(((p\cdot a_1)\cdot a_2)\cdots)\cdot a_\ell$ (with
$p\cdot\varepsilon=p$). For $S\subseteq Q$ we set $S\cdot u:=\{p\cdot u: p\in S\}$.
\\
Let $G=(V,E,s,t)$ be a $k$-out graph.
An \emph{automaton coloring} of $G$ is a map $\chi:E\to[k]$ such that for every vertex $v\in V$ the restriction
$\chi\big|_{\delta_v(V)}$ is a bijection onto $[k]$.
Given such a coloring, we obtain an automaton
\[
  \chi(G):=(V,[k],\delta_\chi)
\]
by declaring $\delta_\chi(v,a)=t(e)$ where $e$ is the unique edge with $s(e)=v$ and $\chi(e)=a$.
We denote by $\mathcal C(G)$ the set of all automaton colorings of $G$.
An \emph{endomorphism} of the automaton $\mathcal{A}$ is a map $\varphi : Q \to Q$ such that
\[
\delta(\varphi(q), a) = \varphi(\delta(q,a))
\quad
\text{for all } q \in Q \text{ and } a \in \Sigma.
\]
The set of all endomorphisms of $\mathcal{A}$ is denoted by $\End(\mathcal{A})$.
An \emph{automorphism} of $\mathcal{A}$ is a bijective endomorphism, and the group of automorphisms is denoted by $\Aut(\mathcal{A}) \subseteq \End(\mathcal{A})$.

 Let $\mathcal A=(Q,\Sigma,\delta)$ be a deterministic complete automaton. A subset $S\subseteq Q$ is \emph{synchronizable} if there exists a word $u\in\Sigma^*$ such that $|S\cdot u|=1$. The automaton $\mathcal A$ is called \emph{synchronizing} if $Q$ is synchronizing, i.e.,  there exists a word $w\in\Sigma^*$ such that \(|Q\cdot w|=1.\) Any such word is called a \emph{reset word}. 
 
 An equivalence relation $\rho$ on $Q$ is an \emph{automaton congruence} if for all $p,q\in Q$ and all $a\in\Sigma$,
\[
p\mathrel{\rho}q \Longrightarrow p\cdot a \mathrel{\rho}q\cdot a.
\]
We denote by $\Cong(\mathcal A)$ the set of all automaton congruences of $\mathcal A$. The automaton $\mathcal A$ is called \emph{simple} if its only congruences are the discrete and the universal one.





\subsection{Endomorphisms and automorphisms of $k$-out graphs}
From now on, we will consider $k$-out graphs, which form the main object of study of the present paper. Recall that a graph $G=(V,E,s,t)$ is called \emph{$k$-out} if every vertex has exactly $k$ outgoing edges, that is, $|\delta_u(V)|=k$ for all $u\in V$.
If $G$ and $H$ are $k$-out graphs, a homomorphism of quivers $\varphi:G\to H$ is called a \emph{$k$-out homomorphism}, or equivalently a \emph{local out-isomorphism}, if for every $v\in V(G)$ the induced map
\[
\{e\in E(G): s(e)=v\}\longrightarrow \{f\in E(H): s(f)=\varphi_v(v)\},\qquad e\mapsto \varphi_e(e),
\]
is a bijection.
For a $k$-out graph $G$, we denote by $\End(G)$ the monoid of $k$-out endomorphisms of $G$.
The notion of $k$-out homomorphism defines a natural category whose objects are $k$-out graphs and whose morphisms are local out-isomorphisms. In this category, the isomorphisms are exactly the isomorphisms of the underlying quivers. Equivalently, a homomorphism
\[
\varphi=(\varphi_v,\varphi_e):G\to H
\]
between $k$-out graphs is an isomorphism if and only if it is a local out-isomorphism and both $\varphi_v$ and $\varphi_e$ are bijective.

In the present directed setting, $k$-out homomorphisms are the natural analogue of covering maps: they are locally bijective on outgoing stars. This is weaker than the usual topological notion of graph covering, which is locally bijective on the whole star of each vertex.

Accordingly, for a $k$-out graph $G$ we denote by $\Aut(G)$ both the automorphism group of the underlying graph and, equivalently, the group of invertible elements of $\End(G)$.

In particular, every $\varphi=(\varphi_v,\varphi_e)\in\Aut(G)$ preserves both outgoing and incoming multiplicities. More precisely, for every $x\in V(G)$ and every $A\subseteq V(G)$,
\[
|\delta_x(A)|=|\delta_{\varphi_v(x)}(\varphi_v(A))|
\qquad\text{and}\qquad
|\delta_A(x)|=|\delta_{\varphi_v(A)}(\varphi_v(x))|.
\]

In the category of directed multigraphs, parallel edges give rise to unavoidable endomorphisms: a map may fix every vertex and still permute edges with the same source and target. Since these symmetries are intrinsic to the category and disappear only in the parallel-free setting, we isolate them explicitly. For a $k$-out graph $G$, we write
\[
\End_E(G):=\{\varphi\in\End(G):\varphi_v=\id_{V(G)}\},
\qquad
\Aut_E(G):=\Aut(G)\cap \End_E(G).
\]
Thus $\End_E(G)$ consists of the endomorphisms fixing all vertices, and $\Aut_E(G)$ of the automorphisms fixing all vertices. We call them the monoid of \emph{vertex-trivial endomorphisms} and the group of \emph{vertex-trivial automorphisms} of $G$, respectively. It is normal in $\Aut(G)$ since conjugation preserves the induced action on vertices.

Note that the notion of $k$-out homomorphism is weaker than local injectivity in the usual edge-labelled sense, where one also requires preservation of the local ordering or labels of outgoing edges. Nevertheless, in the strongly connected case no nontrivial collapse can occur.

\begin{proposition}\label{prop:strongly-connected-endo-auto}
Let $G$ be a strongly connected $k$-out graph. Then every $k$-out endomorphism $\varphi:G\to G$ is an epimorphism and hence, by finiteness, an automorphism. In particular, $\End(G)=\Aut(G)$.
\end{proposition}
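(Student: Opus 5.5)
To prove the statement, I would first show that the vertex map $\varphi_v$ is surjective. The edge map is then automatically surjective, and bijectivity follows by finiteness together with the local out-bijectivity.

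\emph{Step 1: the image of $\varphi_v$ is closed under out-edges.} Let $W:=\varphi_v(V)$. I claim $W$ is closed under taking out-neighbours. Take $w=\varphi_v(v)\in W$ and any edge $f$ with $s(f)=w$. By local out-bijectivity at $v$, there is an edge $e$ with $s(e)=v$ and $\varphi_e(e)=f$. Hence
\[
t(f)=t(\varphi_e(e))=\varphi_v(t(e))\in W.
\]
So $W$ is a nonempty forward-closed subset of $V$.

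\emph{Step 2: surjectivity on vertices and edges.} Since $G$ is strongly connected, every vertex is reachable from any vertex of $W$. Forward closure then gives $W=V$, so $\varphi_v$ is surjective. As $V$ is finite, $\varphi_v$ is a bijection. For edges, any $f\in E$ has $s(f)=\varphi_v(v)$ for some $v$, and local bijectivity at $v$ yields a preimage of $f$. Thus $\varphi$ is an epimorphism.

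\emph{Step 3: injectivity on edges.} Suppose $\varphi_e(e)=\varphi_e(e')$. Then $\varphi_v(s(e))=\varphi_v(s(e'))$, and injectivity of $\varphi_v$ gives $s(e)=s(e')$. Local injectivity at that vertex then forces $e=e'$. (Alternatively, a surjection $E\to E$ of finite sets is bijective.)

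\emph{Step 4: the inverse is a $k$-out homomorphism.} The inverse pair $(\varphi_v^{-1},\varphi_e^{-1})$ commutes with $s$ and $t$. It is locally bijective on out-stars because $\varphi$ restricts to a bijection from the out-star of $v$ onto the out-star of $\varphi_v(v)$. Hence $\varphi\in\Aut(G)$, which gives $\End(G)=\Aut(G)$.

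The only substantive point is Step 1, namely that local out-surjectivity makes the image forward-closed. Everything else is finiteness bookkeeping, so I expect no real obstacle.
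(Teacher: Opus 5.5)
Your proposal is correct and follows essentially the same route as the paper: the image of $\varphi_v$ is closed under out-edges by local out-bijectivity, so strong connectivity forces $\varphi_v$ to be surjective, and finiteness together with local bijectivity then gives bijectivity on vertices and edges. Your Steps 3 and 4 (edge injectivity, and the inverse being a $k$-out homomorphism) spell out details the paper leaves implicit, and they are sound.
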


\begin{proof}
Let $G=(V,E,s,t)$ and let $\varphi=(\varphi_v,\varphi_e)\in\End(G)$. If $v\in \varphi_v(V)$ and $f\in E$ starts at $v$, then, since $\varphi$ is a $k$-out homomorphism, the outgoing star of $v$ is entirely in the image of $\varphi_e$. Hence $f=\varphi_e(e)$ for some edge $e\in E$. Assume $V\neq\varnothing$, and fix $v\in \varphi_v(V)$. Let $w\in V$ be arbitrary. Since $G$ is strongly connected, there exists a path
\[
v=v_0\xrightarrow{f_1}v_1\xrightarrow{f_2}\cdots\xrightarrow{f_\ell}v_\ell=w.
\]
Choose $u_0\in V$ such that $\varphi_v(u_0)=v_0$. As observed above, there exists an edge $e_1$ with $\varphi_e(e_1)=f_1$. Then
\[
v_1=t(f_1)=t(\varphi_e(e_1))=\varphi_v(t(e_1)),
\]
so $v_1\in \varphi_v(V)$. Repeating the same argument along the path shows that every $v_i$, and in particular $w$, lies in the image of $\varphi_v$. Thus $\varphi_v$ is surjective. Since $\varphi$ is locally bijective on outgoing stars, surjectivity of $\varphi_v$ implies surjectivity of $\varphi_e$ as well. Hence $\varphi$ is an epimorphism. Since $G$ is finite, $\varphi_v$ and $\varphi_e$ are bijective, so $\varphi\in\Aut(G)$.
\end{proof}

Let $G$ be a $k$-out graph and let $\varphi\in\End(G)$. For a vertex $v\in V(G)$, the set
\[
O_\varphi(v):=\{v,\varphi(v),\varphi^2(v),\dots\}
\]
is the \emph{forward orbit} of $v$. Since $\varphi$ need not be surjective, the graph defined by the iterates of $\varphi$ decomposes into components, each containing a unique cycle, called the \emph{limit cycle} of that component and denoted by $C_v$.

The corresponding \emph{basin} is
\[
B_\varphi(v):=\bigcup_{n\ge 0}\varphi^{-n}(C_v),
\]
that is, the set of all vertices whose forward orbit eventually enters the cycle $C_v$. The basins form a partition of $V(G)$.

\subsection{Lumpability and congruences of $k$-out graphs}

For $k$-out graphs, the natural notion of quotient is not an arbitrary partition of the vertex set, but a partition compatible with the outgoing multiplicities. This is precisely the notion of strong lumpability for the uniform random walk on the graph, and it is the correct analogue of equitable partitions in the present category.

Let $G=(V,E,s,t)$ be a finite $k$-out graph. It defines a Markov chain with transition matrix
\[
P(u,v)=\frac{|\delta_u(v)|}{k},
\qquad u,v\in V,
\]
that is, the random walk choosing uniformly among the outgoing edges. If $G$ is strongly connected, this chain is irreducible and hence admits a unique stationary distribution; see, e.g.,~\cite{KemenySnellFMC}. In this setting, strong lumpability admits a simple graph-theoretic reformulation.

\begin{definition}[Lumpability and congruence]
Let $G=(V,E,s,t)$ be a $k$-out graph, and let $\mathcal C=\{C_1,\dots,C_m\}$ be a partition of $V$. We say that $\mathcal C$ is \emph{lumpable} if for every pair of blocks $C_i,C_j$ and all $u,u'\in C_i$ one has
\[
|\delta_u(C_j)|=|\delta_{u'}(C_j)|.
\]
Equivalently, for each fixed $i$ and $j$, every vertex in $C_i$ has the same number of outgoing edges into $C_j$. 
\\
An equivalence relation $\rho$ on $V(G)$ is called a \emph{congruence} (of the $k$-out graph $G$) if the quotient graph $G/\rho$ is again $k$-out.
\end{definition}

With our definition of quotient, congruences and lumpable partitions are the same notion. Indeed, if $\rho$ is lumpable, then the quotient is well defined and $k$-out. Conversely, if $G/\rho$ is $k$-out and $C=[u]_\rho$ is a class, then for every target class $D$ one has
\[
|\delta_x(D)|\le \max_{p\in C} |\delta_p(D)|
\qquad(x\in C),
\]
and summing over all target classes gives equality on both sides, since both $x$ and $C$ have out-degree $k$. Hence equality holds termwise, so $|\delta_x(D)|$ is independent of the choice of $x\in C$. Thus $\rho$ is lumpable.

Thus, in the category of $k$-out graphs, congruences and lumpable partitions are the same notion. We write $\Cong(G)$ for the set of congruences of $G$. It is a lattice with minimum the identity relation $\Delta_G$ and maximum the universal relation $\nabla_G$. Any congruence different from $\Delta_G$ and $\nabla_G$ is called \emph{nontrivial}.

\begin{remark}\label{rem:quotient-homomorphism-choice}
If $\rho$ is a congruence on a $k$-out graph $G$, then the quotient $G/\rho$ is again $k$-out. The induced map on vertices, $q_v:V(G)\to V(G/\rho)$, $v\mapsto [v]_\rho$, is canonical. The induced map on edges is, in general, not canonical when parallel edges occur in the quotient. However, by lumpability one may always choose a map $q_e:E(G)\to E(G/\rho)$ so that $q=(q_v,q_e):G\to G/\rho$ is a $k$-out homomorphism.

Indeed, for each pair of classes $[u]_\rho,[w]_\rho$, the number $m_{[u],[w]}:=|\delta_x([w]_\rho)|$ is independent of the choice of $x\in [u]_\rho$. Realize the quotient $G/\rho$ with exactly $m_{[u],[w]}$ distinct parallel edges $e^{[u],[w]}_1,\dots,e^{[u],[w]}_{m_{[u],[w]}}$ from $[u]_\rho$ to $[w]_\rho$. Then, for each vertex $x\in [u]_\rho$ and each target class $[w]_\rho$, choose a bijection
\[
\beta_{x,[w]_\rho}:\delta_x([w]_\rho)\longrightarrow \{e^{[u],[w]}_1,\dots,e^{[u],[w]}_{m_{[u],[w]}}\}.
\]
Define $q_e$ by $q_e(e):=\beta_{x,[w]_\rho}(e)$ for $e\in \delta_x([w]_\rho)$. Doing this for every target class $[w]_\rho$ and every vertex $x$ defines $q_e$ on the outgoing star of $x$; since $\sum_{[w]_\rho} m_{[u],[w]}=k$, this yields a bijection from the outgoing star of $x$ onto the outgoing star of $[x]_\rho$. Thus $q$ is a $k$-out homomorphism. In what follows, whenever we speak of the quotient homomorphism $G\to G/\rho$, we mean such a choice.
\end{remark}

In the Markov chain literature, lumpability is typically allowed to include the one-block partition $\{V\}$, equivalently the universal relation $\nabla_G$, which is therefore a valid, albeit degenerate, lumping; see, e.g., \cite[\S6.3]{KemenySnellFMC}. Moreover, computing the coarsest lumping quotient, or more generally the coarsest lumping refining a prescribed initial partition, can be done in polynomial time, for instance in $O(mn)$ time~\cite{Buchholz1994} and even in $O(m\log n)$ time via Paige--Tarjan style refinement~\cite{DerisaviHermannsSanders2003}, where $n$ is the number of states and $m$ the number of nonzero transitions. In Section~\ref{sec: complexity} we shall discuss the complexity of deciding whether a $k$-out graph admits a nontrivial congruence.

\subsection{Partitionability of the Perron--Frobenius eigenvector and synchronization}
Recall that a $k$-out graph $G$ is \emph{totally synchronizing} if every coloring $\chi\in\mathcal C(G)$ yields a synchronizing automaton $\chi(G)$. Since total synchronization is the main topic of the present paper, we now recall a useful criterion based on the Perron--Frobenius eigenvector. Throughout this subsection, we assume that $G$ is strongly connected, with adjacency matrix $A$. This hypothesis implies that $A$ is irreducible. Then $A\mathbf 1 = k\,\mathbf 1$, so the spectral radius of $A$ is $k$. By the Perron--Frobenius theorem for irreducible nonnegative matrices, there exists an entrywise positive left eigenvector $\pi$ such that $\pi^{\top}A = k\,\pi^{\top}$, and it is unique up to multiplication by a positive scalar; see, for instance, \cite{Seneta}. Since $A$ has integer entries, the eigenspace corresponding to the eigenvalue $k$ contains a nonzero rational vector. After scaling by the least common multiple of the denominators, we may therefore assume $\pi\in \mathbb N_{>0}^{V(G)}$. We call the normalization with $\gcd(\pi(v):v\in V(G))=1$ the \emph{integer-normalized} left Perron--Frobenius eigenvector of $G$. In what follows, when we refer to the (left) Perron--Frobenius eigenvector of $G$, we always mean this integer-normalized representative.

\begin{definition}[Partitionable vector]
Let $\mathbf w=(w_1,\dots,w_n)\in\mathbb N^n$. The \emph{$\mathbf w$-weight} of a subset $S\subseteq [n]:=\{1,\dots,n\}$ is
\[
\mathrm{Wg}_{\mathbf w}(S):=\sum_{i\in S} w_i.
\]
We say that $\mathbf w$ is \emph{equipartitionable}, or simply \emph{partitionable}, if there exists a partition
\[
[n]=S_1\sqcup\cdots\sqcup S_m,\qquad m\ge 2,
\]
such that $\mathrm{Wg}_{\mathbf w}(S_i)=\mathrm{Wg}_{\mathbf w}(S_j)$ for all $i,j\in [m]$. In this case we call $\{S_1,\dots,S_m\}$, or equivalently the associated equivalence relation, an \emph{equipartition} of $\mathbf w$. The set $\mathcal P_E(\mathbf w)$ of all equipartitions of $\mathbf w$, ordered by refinement, forms a lattice.
\end{definition}

The previous definition is relevant because partitionability of the Perron--Frobenius eigenvector of a $k$-out graph is closely related to synchronization properties of its automaton colorings. This connection was proved by Friedman~\cite{Fr1990} and slightly extended in \cite{GuPri}; we state here a convenient special case. Recall that for a deterministic automaton $\mathcal A=(Q,\Sigma,\delta)$, a subset $S\subseteq Q$ is called synchronizable if there exist a word $u\in\Sigma^*$ and a state $p\in Q$ such that $S\cdot u=\{p\}$.

\begin{theorem}[\cite{Fr1990,GuPri}]\label{thm:friedman-weight}
Let $G$ be a strongly connected $k$-out graph, and let $\mathbf w\in\mathbb N_{>0}^{V(G)}$ be a left Perron--Frobenius eigenvector. Then, for every coloring $\chi$ of $G$, there exists a partition of $V(G)$ into synchronizing subsets $S_1,\dots,S_m$ with respect to the automaton $\chi(G)$ such that
\[
\mathrm{Wg}_{\mathbf w}(S_1)=\cdots=\mathrm{Wg}_{\mathbf w}(S_m),
\]
and this common value is maximal among the weights of synchronizing subsets. Moreover, the equivalence relation associated with the partition $\{S_1,\dots,S_m\}$ is the kernel of some word.
\end{theorem}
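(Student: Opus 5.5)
We prove Theorem~\ref{thm:friedman-weight} following Friedman's argument. Fix a coloring $\chi$ and write $p\cdot u$ for the action in $\chi(G)$. The basic tool is weight conservation: for any $S\subseteq V$ and any letter $a$, the left-eigenvector identity $\pi^\top A=k\pi^\top$ gives
\[
\sum_{a\in[k]}\mathrm{Wg}_{\mathbf w}(S\cdot^{-1}a)=k\,\mathrm{Wg}_{\mathbf w}(S),
\]
where $S\cdot^{-1}a=\{p: p\cdot a\in S\}$. Indeed, each edge into $S$ is counted exactly once with its source weight, and $\sum_{p}w_p|\delta_p(S)|=\sum_{v\in S}(\mathbf w^\top A)_v=k\,\mathrm{Wg}_{\mathbf w}(S)$. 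Iterating this over all words of length $n$, the preimages $S\cdot^{-1}u$ have average weight $\mathrm{Wg}_{\mathbf w}(S)$.

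\textbf{Step 1 (maximal weight).} Let $M$ be the maximal weight of a synchronizable set. Every synchronizable set $T$ satisfies $T\subseteq \{q\}\cdot^{-1}u$ for some word $u$ with $T\cdot u=\{q\}$, and the full preimage $\{q\}\cdot^{-1}u$ is itself synchronizable. So $M$ is attained by a preimage of a singleton. Fix a state $q$ and a word $u$ with $\mathrm{Wg}(\{q\}\cdot^{-1}u)=M$. By the averaging identity, every one-letter extension satisfies $\mathrm{Wg}(\{q\}\cdot^{-1}(au))\le M$, while the average over $a$ equals $\mathrm{Wg}(\{q\}\cdot^{-1}u)=M$. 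Hence all of them have weight exactly $M$, and by induction $\mathrm{Wg}(\{q\}\cdot^{-1}(vu))=M$ for every word $v$.

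\textbf{Step 2 (partition).} I expect this step to be the main obstacle: we must cover $V$ by disjoint maximal sets arising as classes of the kernel of a single word. Choose a word $w$ such that the image $R=V\cdot w$ has minimal cardinality among all images of words that also contain $q\cdot$-type maximal preimages. More concretely, take $w$ with $|V\cdot w|$ minimal. For $r\in R$ and any $v$, the sets $\{r\}\cdot^{-1}(w v)$ with $r\in R$ partition $V$. I claim each class of $\ker(w)$, namely $\{r\}\cdot^{-1}w$ for $r\in R$, has weight $M$.

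First we show that no two elements of $R$ are synchronizable; otherwise we could shrink the image, contradicting minimality of $|R|$. Now consider the maximal set $T=\{q\}\cdot^{-1}u$. Its image $T\cdot w$ is... here it is more convenient to take $w=u'$ ending appropriately. Instead, use strong connectivity: pick a word $v$ with $r\cdot v=q$ for a given $r\in R$, and consider $w'=wvu$. Then $\ker(w')\supseteq\ker(w)$, and since $|V\cdot w'|\ge|R|$ is minimal, the kernels coincide. The class of $\ker(w')$ containing $\{r\}\cdot^{-1}w$ is a synchronizable set contained in $\{q\}\cdot^{-1}(wvu)$, and that preimage has weight $M$ by Step 1. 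Any other class mapped to $q$ by $wvu$ would merge two image points, contradicting minimality. So the class equals the full preimage and has weight $M$.

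Since every $r\in R$ can be treated this way, all classes of $\ker(w)$ have weight $M$. They are synchronizable, they partition $V$, and they form the kernel of the word $w$. This proves the theorem.
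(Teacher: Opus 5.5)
The paper gives no proof of this theorem; it is quoted from Friedman and Gusev--Pribavkina. Your outline is the standard averaging argument: weight conservation under one-letter preimages, followed by a word of minimal rank. The averaging identity and Step~1 are correct. However, Step~2 contains a step that fails as written.

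You choose $v$ with $r\cdot v=q$ and then assert that the fibre $\{r\}\cdot^{-1}w$ lies in $\{q\}\cdot^{-1}(wvu)$. But $wvu$ sends that fibre to $r\cdot vu=q\cdot u$, which in general is not $q$. So the fibre sits in a different fibre of $wvu$. Step~1 says nothing about the weight of that fibre, since it only controls preimages of $q$ under words ending in $u$.

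The repair is to aim at $T=\{q\}\cdot^{-1}u$ instead of at $q$. Since $T\neq\varnothing$ (it has weight $M>0$) and $\chi(G)$ is strongly connected, you can choose $v$ with $r\cdot v\in T$, i.e.\ $r\cdot vu=q$. With $w'=wvu$ your argument that $\ker(w')=\ker(w)$, via minimality of $|V\cdot w|$, is correct. The classes of $\ker(w')$ are exactly the fibres of $w'$, so the class $\{r\}\cdot^{-1}w$ equals $\{q\}\cdot^{-1}\bigl((wv)u\bigr)$, which has weight $M$ by Step~1. Nothing else needs to change.

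You should also delete the draft-like sentences at the start of Step~2:
\begin{itemize}
\item The claim that the sets $\{r\}\cdot^{-1}(wv)$, $r\in R$, partition $V$ is false in general, because $V\cdot wv$ need not equal $R$.
\item The unfinished sentence about $T\cdot w$ should go.
\item The observation that no two elements of $R$ are synchronizable is never used.
\end{itemize}
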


As an immediate consequence, if the integer-normalized left Perron--Frobenius eigenvector of a strongly connected $k$-out graph $G$ is not equipartitionable, then $G$ is totally synchronizing.

Another useful consequence is the following. We recall that a Eulerian $k$-out graph is a $k$-out graph with $|\delta_V(u)|=|\delta_u(V)|=k$ for all $u\in V=V(G)$. 

\begin{proposition}\label{lem:prime-eulerian-dichotomy}
Let $G$ be a strongly connected Eulerian $k$-out graph with $|V(G)|$ prime. Then, for every coloring $\chi$, the automaton $\chi(G)$ is not synchronizing if and only if it is permutative, i.e.\ every letter acts as a permutation of $V(G)$.
\end{proposition}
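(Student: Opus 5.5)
The plan is to apply Theorem~\ref{thm:friedman-weight} with the Perron--Frobenius eigenvector of an Eulerian graph, and then use that $|V(G)|$ is prime.

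\textbf{Step 1: the eigenvector.} Since $G$ is Eulerian, $\mathbf 1^{\top}A=k\,\mathbf 1^{\top}$, because column sums are in-degrees, equal to $k$. By uniqueness up to scaling, the integer-normalized left Perron--Frobenius eigenvector is $\mathbf w=\mathbf 1$. So the weight of a subset is its cardinality.

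\textbf{Step 2: the easy direction.} If every letter acts as a permutation, then $|V\cdot w|=|V|\ge 2$ for every word $w$, so $\chi(G)$ is not synchronizing. (For $|V|$ prime we have $|V|\ge2$.)

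\textbf{Step 3: the converse.} Suppose $\chi(G)$ is not synchronizing. Theorem~\ref{thm:friedman-weight} gives a partition of $V$ into synchronizable sets $S_1,\dots,S_m$ of equal cardinality $s$. Hence $ms=|V|=p$ is prime, so $s=1$ or $s=p$.
\begin{itemize}
\item If $s=p$, then $V$ itself is synchronizable, contradicting non-synchronization.
\item Hence $s=1$, and $s$ is the maximal size of a synchronizable subset.
\end{itemize}
Now suppose some letter $a$ were not a permutation. Then there are $u\neq v$ with $u\cdot a=v\cdot a$, so $\{u,v\}$ is synchronizable by the word $a$. This is a synchronizable set of size $2>1$, contradicting maximality. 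So every letter is a permutation and $\chi(G)$ is permutative.

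\textbf{Remaining check.} Strong connectivity is used only to invoke Theorem~\ref{thm:friedman-weight} and the uniqueness of the Perron--Frobenius vector. The only point needing care is that the theorem's ``maximal among weights of synchronizing subsets'' statement applies to arbitrary synchronizable subsets, including pairs. As recalled above the theorem, it does. I expect no real obstacle: the argument is essentially the observation $\mathbf w=\mathbf 1$ combined with primality.
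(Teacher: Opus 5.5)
Your proof is correct and follows the paper's argument: you use $\mathbf w=\mathbf 1$ for Eulerian $G$, then Theorem~\ref{thm:friedman-weight} with its maximality clause, and then primality of $|V(G)|$ to force the common block size to be $1$ or $|V(G)|$. The only difference is presentation: the paper starts from a non-permutative letter and concludes that the automaton is synchronizing, while you run the same steps in contrapositive form.
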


\begin{proof}
If $\chi(G)$ is permutative, then every word acts as a permutation, so no word can be synchronizing. Conversely, assume that $\chi(G)$ is not permutative. Then some letter collapses a pair of states, hence $\chi(G)$ admits a synchronizing subset of cardinality at least $2$. Since $G$ is Eulerian, its integer-normalized left Perron--Frobenius eigenvector is $\mathbf w=\mathbf 1$. By Theorem~\ref{thm:friedman-weight}, there exists a partition of $V(G)$ into synchronizing subsets of equal maximal $\mathbf w$-weight. As $\mathbf w=\mathbf 1$, these subsets all have the same cardinality $m\ge 2$. If there are $t$ such subsets, then $|V(G)|=tm$, thus by the primality of $|V(G)|$ we conclude $m=|V(G)|$, that is $\chi(G)$ is synchronizing. 
\end{proof}
The following lemma will be used later. 
\begin{lemma}\label{lem:pf-dominating-coordinate}
Let $G$ be a strongly connected $k$-out graph, and let $\pi\in\mathbb N_{>0}^{V(G)}$ be its integer-normalized left Perron--Frobenius eigenvector. If there exists a vertex $v\in V(G)$ such that
\[
\pi(v)>\frac12\sum_{x\in V(G)}\pi(x),
\]
then $\pi$ is not equipartitionable.
\end{lemma}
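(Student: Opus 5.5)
The argument is a direct counting one: an equipartition into at least two blocks forces every block to be light, and the dominant vertex is too heavy to fit in any of them. Set $N:=\sum_{x\in V(G)}\pi(x)$. Suppose, for contradiction, that $\pi$ admits an equipartition $V(G)=S_1\sqcup\cdots\sqcup S_m$ with $m\ge 2$. Then each block has weight
\[
\mathrm{Wg}_{\pi}(S_i)=\frac{N}{m}\le \frac{N}{2}.
\]

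The vertex $v$ lies in some block, say $S_j$. Every coordinate of $\pi$ is positive, since $\pi\in\mathbb N_{>0}^{V(G)}$. Hence
\[
\mathrm{Wg}_{\pi}(S_j)\ge \pi(v)>\frac{N}{2}\ge \frac{N}{m}.
\]
This contradicts $\mathrm{Wg}_{\pi}(S_j)=N/m$. Therefore no equipartition exists, and $\pi$ is not equipartitionable.

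There is no real obstacle in this argument. The only points to state carefully are that the definition of equipartition requires $m\ge 2$, which gives the bound $N/m\le N/2$, and that positivity, indeed mere nonnegativity, of the entries ensures $\mathrm{Wg}_\pi(S_j)\ge\pi(v)$. Perron--Frobenius theory and the integer normalization play no role beyond guaranteeing that $\pi$ has nonnegative entries.
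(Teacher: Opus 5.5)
Your proposal is correct and is essentially the paper's own argument. With $m\ge 2$ blocks, each block weighs $\frac1m\sum_x\pi(x)\le\frac12\sum_x\pi(x)$, while the block containing $v$ weighs at least $\pi(v)$, which exceeds that bound.
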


\begin{proof}
Suppose that $\pi$ were equipartitionable. Then there would exist a partition of $V(G)$ into $m\ge 2$ blocks $S_1, \ldots, S_m$ of equal weight $N=\mathrm{Wg}_{\mathbf w}(S)$. Hence
\[
N=\frac1m\sum_{x\in V(G)}\pi(x)\le \frac12\sum_{x\in V(G)}\pi(x).
\]
But the block containing $v$ has weight at least $\pi(v)$, which is strictly larger than $N$, a contradiction. Therefore $\pi$ is not equipartitionable.
\end{proof}

It is well known that Perron--Frobenius weights aggregate along lumpable partitions; in the Markov-chain language, this is the standard aggregation property of stationary measures for strongly lumpable chains.

\begin{lemma}\label{lem:pf-lump-part}
Let $G$ be a strongly connected $k$-out graph, let $\rho=\{B_1,\dots,B_m\}$ be a lumpable partition, and let $\widehat G:=G/\rho$ be the quotient graph. Let $\mathbf w\in\mathbb N^{V(G)}_{>0}$ and $\widehat{\mathbf w}\in\mathbb N^{[m]}_{>0}$ be the integer-normalized left Perron--Frobenius eigenvectors of $G$ and $\widehat G$, respectively. Then there exists $\alpha\in\mathbb N_{>0}$ such that, for every $r\in[m]$,
\[
\sum_{u\in B_r}\mathbf w(u)=\alpha\,\widehat{\mathbf w}(r).
\]
In particular, if $\widehat{\mathbf w}$ admits an equipartition into blocks of weight $N$, then $\mathbf w$ admits an equipartition, by unions of $\rho$-blocks, into blocks of weight $\alpha N$.
\end{lemma}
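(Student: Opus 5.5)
The plan is to push the left eigenvector of $G$ forward along the quotient map and check that the result is a left Perron--Frobenius eigenvector of $\widehat G$. Let $A$ be the adjacency matrix of $G$ and $\widehat A$ that of $\widehat G$. By lumpability, $\widehat A(r,s)=|\delta_x(B_s)|$ for every $x\in B_r$. Define the aggregated vector
\[
\mathbf v(r):=\sum_{u\in B_r}\mathbf w(u),\qquad r\in[m].
\]
For each $s\in[m]$ we then compute
\[
\sum_{r}\mathbf v(r)\widehat A(r,s)=\sum_r\sum_{x\in B_r}\mathbf w(x)\,|\delta_x(B_s)|=\sum_{x\in V}\mathbf w(x)\sum_{y\in B_s}A(x,y)=\sum_{y\in B_s}k\,\mathbf w(y)=k\,\mathbf v(s).
\]
This uses only that $\widehat A(r,s)$ is constant on $B_r$ and that $\mathbf w^\top A=k\mathbf w^\top$. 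Hence $\mathbf v^\top\widehat A=k\mathbf v^\top$, and $\mathbf v$ is entrywise positive and integral.

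Next we need uniqueness up to scaling. The quotient $\widehat G$ is strongly connected, since the vertex map of the quotient homomorphism in Remark~\ref{rem:quotient-homomorphism-choice} sends paths to paths and is surjective. It is also $k$-out, so $\widehat A$ is irreducible with spectral radius $k$. The Perron--Frobenius theorem then says the left $k$-eigenspace is one-dimensional. Therefore $\mathbf v=\beta\widehat{\mathbf w}$ for some real $\beta>0$.

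The only step needing care is showing that $\beta$ is a positive integer. Since $\widehat{\mathbf w}$ is integral with $\gcd$ equal to $1$, choose integers $c_r$ with $\sum_r c_r\widehat{\mathbf w}(r)=1$. Then
\[
\beta=\sum_r c_r\,\beta\widehat{\mathbf w}(r)=\sum_r c_r\mathbf v(r)\in\mathbb Z,
\]
and $\beta>0$. Setting $\alpha:=\beta$ gives the identity in the statement.

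For the final claim, let $\widehat{\mathbf w}$ have an equipartition $\{T_1,\dots,T_p\}$ with $p\ge2$ and common weight $N$. Set $S_i:=\bigcup_{r\in T_i}B_r$. The sets $S_i$ partition $V(G)$ into $p\ge2$ blocks, each a union of $\rho$-blocks, and
\[
\mathrm{Wg}_{\mathbf w}(S_i)=\sum_{r\in T_i}\mathbf v(r)=\alpha\sum_{r\in T_i}\widehat{\mathbf w}(r)=\alpha N .
\]
This is the required equipartition of $\mathbf w$ into blocks of weight $\alpha N$.
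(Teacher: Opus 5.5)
Your proof is correct and follows the same route as the paper. The paper writes lumpability as $AR=R\widehat A$, with $R$ the block-incidence matrix, and computes $\mathbf w^\top R\widehat A=\mathbf w^\top AR=k\,\mathbf w^\top R$; this is exactly your entrywise computation, followed by the same Perron--Frobenius uniqueness step. Your explicit checks that $\widehat G$ is strongly connected and that the proportionality constant is an integer (the B\'ezout argument using $\gcd=1$) fill in two points the paper leaves implicit.
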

\begin{proof}
Let $R$ be the block-incidence matrix of $\rho$. Lumpability is equivalent to $AR=R\widehat A$. Set $\mathbf s^\top:=\mathbf w^\top R$. Then
$\mathbf s^\top\widehat A=\mathbf w^\top R\widehat A=\mathbf w^\top AR=k\,\mathbf w^\top R=k\,\mathbf s^\top$.
Thus $\mathbf s$ is a positive integral left eigenvector of $\widehat A$ for the eigenvalue $k$. By Perron--Frobenius, $\mathbf s$ is proportional to the integer-normalized vector $\widehat{\mathbf w}$, so $\mathbf s=\alpha\,\widehat{\mathbf w}$ for some $\alpha\in\mathbb N_{>0}$. This is exactly the claimed identity. The final statement follows by grouping $\rho$-blocks.
\end{proof}

\section{Totally synchronizing $k$-out  and their symmetries}

Let $G$ be a $k$-out graph, let $\chi\in\mathcal C(G)$ be a coloring, and view $\chi(G)$ as an automaton. We denote by  $\Aut(\chi(G))$ (resp. $\End(\chi(G))$) the automorphisms (resp. the endomorphisms) that preserve the coloring in the following sense. 
An automorphism $\varphi\in \Aut(G)$ is \emph{compatible} with the coloring $\chi$ if $\chi(\varphi_e(e))=\chi(e)$ for all edges $e\in E(G)$. 
If $\delta_\chi$ denotes the transition function of $\chi(G)$, then compatibility is equivalent to
\[
\varphi_v(\delta_\chi(v,a))=\delta_\chi(\varphi_v(v),a)
\qquad (v\in V(G),\ a\in[k]),
\]
that is, $\varphi_v\in \Aut(\chi(G))$.

Note that any endomorphism $\varphi \in \End(\chi(G))$ of the colored automaton induces, by forgetting the colors, an endomorphism of the underlying
$k$-out directed graph. By abuse of notation, we denote this induced
endomorphism again by $\varphi \in \End(G)$. This defines a natural injective
homomorphism
\[
\End(\chi(G)) \hookrightarrow \End(G),
\]
and hence we may regard $\End(\chi(G))$ as a submonoid of $\End(G)$.

\begin{lemma}\label{lem: nec condition compatibility}
Let $G$ be a strongly connected $k$-out graph, and let
$\varphi \in \Aut(G)$ be a graph automorphism with $\varphi \neq \id$
that fixes a vertex, i.e.,
\[
\exists\, v \in V(G) \text{ such that } \varphi(v) = v .
\]
Then $\varphi$ is not compatible with any automaton coloring $\chi$ on $G$.
\end{lemma}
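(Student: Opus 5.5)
The plan is to prove the contrapositive. Assume $\varphi=(\varphi_v,\varphi_e)\in\Aut(G)$ fixes some vertex $v$ and is compatible with a coloring $\chi$; we show $\varphi=\id$, i.e. both $\varphi_v=\id_{V(G)}$ and $\varphi_e=\id_{E(G)}$. The argument has two steps: propagate fixed vertices along edges, then use the coloring to rule out nontrivial edge permutations.

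\emph{Step 1 (vertices).} Consider $F:=\{u\in V(G):\varphi_v(u)=u\}$, which is nonempty by hypothesis. Compatibility gives $\varphi_v(\delta_\chi(u,a))=\delta_\chi(\varphi_v(u),a)$ for all $u$ and $a\in[k]$. So if $u\in F$, then $\varphi_v(\delta_\chi(u,a))=\delta_\chi(u,a)$, and $F$ is closed under every letter. Every edge of $G$ leaving $u$ is $\delta_\chi(u,a)$ for some $a$, so $F$ is closed under out-neighbours. Strong connectivity then forces $F=V(G)$, that is, $\varphi_v=\id_{V(G)}$.

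\emph{Step 2 (edges).} This step is the only genuinely delicate point, because the setting allows parallel edges and $\Aut_E(G)$ may be nontrivial, so $\varphi_v=\id$ does not by itself give $\varphi=\id$. Take an edge $e$ with $s(e)=u$. Since $\varphi$ is a homomorphism, $s(\varphi_e(e))=\varphi_v(u)=u$, so $\varphi_e(e)$ lies in the outgoing star $\delta_u(V)$. Compatibility gives $\chi(\varphi_e(e))=\chi(e)$. Because $\chi$ restricted to $\delta_u(V)$ is a bijection onto $[k]$, two edges in this star with the same color must coincide, so $\varphi_e(e)=e$. Hence $\varphi_e=\id_{E(G)}$ and $\varphi=\id$, contradicting $\varphi\neq\id$.

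No earlier result is needed beyond the definitions. Proposition~\ref{prop:strongly-connected-endo-auto} is not required, since $\varphi$ is already assumed to be an automorphism.
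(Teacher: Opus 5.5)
Your proof is correct and is essentially the paper's argument. The paper splits into the case $\varphi_v=\id$, where a moved edge and its image are distinct parallel edges with the same color (your Step 2), and the case $\varphi_v\neq\id$, where strong connectivity gives an edge from a fixed vertex to a non-fixed one whose image is a distinct same-colored edge at the same source (your closure argument for $F$, read as a contradiction), so you simply run the same reasoning in contrapositive form.
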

\begin{proof}
First note that if $\varphi_v = \id$, then necessarily $\varphi_e$ swaps two
distinct edges $e_1,e_2$ with the same source and target, that is,
\[
s(e_1)=s(e_2)=v
\quad\text{and}\quad
t(e_1)=t(e_2)=v'.
\]
However, $\varphi$ cannot be compatible with any automaton coloring, since
compatibility would imply $\chi(e_1)=\chi(e_2)$, contradicting the definition
of an automaton coloring. Therefore, we may assume $\varphi_v \neq \id$.
Let $\Fix(\varphi)$ denote the set of vertices of $G$ fixed by $\varphi$.
Since $G$ is strongly connected and $\varphi_{v} \neq \id$, there exists a vertex
$p \in \Fix(\varphi)$ adjacent to some vertex $p' \notin \Fix(\varphi)$, that is, $\varphi(p')=p''\neq p'$.
Let $e,f \in E(G)$ be edges such that
\[
s(e)=s(f)=p,
\qquad
t(e)=p',
\qquad
t(f)=p'',
\]
and $f=\varphi(e)$. Note that $e \neq f$ since $p' \neq p''$. Now assume that $\chi$ is an automaton coloring compatible with $\varphi$.
Then
\[
\varphi\bigl(p \xrightarrow{\chi(e)} p'\bigr)
=
\left(\varphi(p) \xrightarrow{\chi(\varphi(e))} \varphi(p')\right)
=
\left(p \xrightarrow{\chi(f)} p''\right),
\]
and compatibility implies $\chi(f)=\chi(e)$. This yields two distinct edges
starting at $p$ with the same color, contradicting the definition of an
automaton coloring.
\end{proof}

In view of the previous lemma, we introduce the following definition.

\begin{definition}[vertex-rigid]
An non-trivial endomorphism $\varphi \in \End(G)$ is said to be \emph{vertex-rigid at} $v\in V(G)$ if, for every $m>0$,
\[
\varphi^{m}(v)=v
\quad\Longrightarrow\quad
\varphi^{m}(e)=e
\quad \text{for all edges } e \in \delta_v(V(G))
\] 
We say that $\varphi$ is \emph{vertex-rigid on a subset} $V'\subseteq V(G)$ if for every vertex $u\in V'$, $\varphi$ is vertex-rigid at $u$. In the case $V'=V(G)$, we simply say that $\varphi$ is \emph{vertex-rigid}. The set of vertex-rigid endomorphism (automorphism) is denoted by $\REnd(G)$ ($\RAut(G)$). 
\end{definition}
We recall the following terminology. For an endomorphism $\varphi\in\End(G)$, set
\[
\Fix_V(\varphi):=\{v\in V(G)\mid \varphi_v(v)=v\}.
\]
A nontrivial automorphism $\varphi\in\Aut(G)$ is \emph{semiregular} if every non-identity power of $\varphi$ has no fixed vertex, i.e.\
\[
\varphi^m\neq \id_G \quad\Longrightarrow\quad \Fix_V(\varphi^m)=\emptyset
\qquad (m>0).
\]
Equivalently, every orbit of the cyclic group $\langle\varphi\rangle$ on $V(G)$ has cardinality the order $\operatorname{ord}(\varphi)$. 

We have the following fact.

\begin{proposition}\label{prop:vertex-rigid-semiregular}
Let $G$ be a strongly connected $k$-out graph. Then $\End(G)=\Aut(G)$.
Moreover, a nontrivial endomorphism $\varphi\in\End(G)$ is vertex-rigid if and only if it is a semiregular automorphism.
\end{proposition}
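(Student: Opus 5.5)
The first assertion is exactly Proposition~\ref{prop:strongly-connected-endo-auto}, so only the characterization of vertex-rigid endomorphisms needs proof. Fix a nontrivial $\varphi\in\End(G)=\Aut(G)$. Since $\varphi$ is a permutation of the finite sets $V$ and $E$, every vertex $v$ lies on a cycle of $\varphi_v$, and the powers $m>0$ with $\varphi^m(v)=v$ are exactly the multiples of the length of the $\langle\varphi\rangle$-orbit of $v$.

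\emph{Semiregular $\Rightarrow$ vertex-rigid.} Assume $\varphi$ is semiregular, and suppose $\varphi^m(v)=v$ for some $m>0$. Then $\Fix_V(\varphi^m)\neq\emptyset$, so by semiregularity $\varphi^m=\id_G$ as an element of $\Aut(G)$. In particular $\varphi^m_e=\id_E$, and therefore $\varphi^m(e)=e$ for every $e\in\delta_v(V(G))$. Hence $\varphi$ is vertex-rigid at every vertex.

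\emph{Vertex-rigid $\Rightarrow$ semiregular.} Assume $\varphi$ is vertex-rigid, and let $m>0$ be such that $\Fix_V(\varphi^m)\neq\emptyset$. We must show $\varphi^m=\id_G$. Put $\psi=\varphi^m$ and $F=\Fix_V(\psi)\neq\emptyset$. If $u\in F$, then vertex-rigidity at $u$ gives $\psi(e)=e$ for every edge $e$ leaving $u$, so $\psi$ fixes every out-neighbour $t(e)=\psi(t(e))$. Thus $F$ is closed under out-neighbours. By strong connectivity, every vertex is reachable from $F$, so $F=V$. Applying vertex-rigidity once more at each vertex $u$, now with $\psi(u)=u$, shows that $\psi$ fixes every outgoing edge of every vertex, hence every edge. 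So $\psi=\id_G$, which is the semiregularity condition.

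The only delicate point is this second direction. There one must note that vertex-rigidity fixes edges and not merely vertices; this is what rules out $\varphi^m$ acting trivially on $V$ while permuting parallel edges, and it is why the conclusion is $\varphi^m=\id_G$ rather than only $\varphi^m_v=\id$. The argument is the same propagation-by-strong-connectivity idea as in Lemma~\ref{lem: nec condition compatibility}.
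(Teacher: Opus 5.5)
Your proof is correct and follows the paper's argument: semiregularity forces $\varphi^m=\id_G$, and conversely vertex-rigidity makes $\Fix_V(\varphi^m)$ closed under out-neighbours, hence equal to $V(G)$ by strong connectivity, after which vertex-rigidity fixes every edge. The only difference is presentational: you argue directly, whereas the paper argues by contradiction and splits into the cases $F=V(G)$ and $F\subsetneq V(G)$.
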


\begin{proof}
By Proposition~\ref{prop:strongly-connected-endo-auto}, every endomorphism of $G$ is an automorphism. Let $\varphi\in\End(G)=\Aut(G)$ be nontrivial. Assume first that $\varphi$ is vertex-rigid.
Suppose, by contradiction, that $\varphi$ is not semiregular. Then there exists $m>0$ such that
$\psi:=\varphi^m$ is not the identity and fixes at least one vertex. Let $F:=\FixV{\psi}$. If $F=V(G)$, then $\psi$ fixes every vertex. Since $\psi\neq\id_G$, it moves some edge $e$, contradicting vertex-rigidity. If $F\subsetneq V(G)$, then $F$ is nonempty and proper. For every $v\in F$, vertex-rigidity implies
that $\psi$ fixes the outgoing star of $v$ pointwise. Hence every out-neighbour of every vertex of
$F$ lies again in $F$. Thus $F$ is closed under outgoing edges, contradicting strong connectivity.
Therefore $\varphi$ is semiregular.

Conversely, assume that $\varphi$ is semiregular. If $\varphi^m$ fixes a vertex for some $m>0$, then
$\varphi^m=\id_G$. Hence $\varphi^m$ fixes every outgoing star pointwise. Thus $\varphi$ is
vertex-rigid.
\end{proof}

\begin{proposition}\label{prop:semiregular-vertex-rigid}
Let $G$ be a $k$-out graph. Every semiregular automorphism of $G$ is vertex-rigid.
\end{proposition}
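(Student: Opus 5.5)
The statement is essentially the converse direction of Proposition~\ref{prop:vertex-rigid-semiregular}, but now without assuming strong connectivity. The plan is therefore to check the definitions directly and not invoke any structural result. Let $\varphi\in\Aut(G)$ be semiregular. By definition it is nontrivial, which matches the requirement in the definition of vertex-rigid that $\varphi$ be a non-trivial endomorphism.

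\textbf{Reduction to the identity.} Fix a vertex $v\in V(G)$ and an integer $m>0$ with $\varphi^m(v)=v$. We must show that $\varphi^m(e)=e$ for every $e\in\delta_v(V(G))$. Since $v\in\Fix_V(\varphi^m)$, this set is nonempty. Semiregularity says that $\varphi^m\neq\id_G$ forces $\Fix_V(\varphi^m)=\emptyset$. Taking the contrapositive, we get $\varphi^m=\id_G$.

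\textbf{Conclusion.} Here $\id_G$ means the identity of the graph, acting trivially on both vertices and edges. Hence $\varphi^m_e(e)=e$ for all edges, and in particular for those in the outgoing star of $v$. Since $v$ and $m$ were arbitrary, $\varphi$ is vertex-rigid at every vertex, so $\varphi\in\RAut(G)$.

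The only point requiring care is how the definition of semiregular is read. The condition ``$\varphi^m\neq\id_G$'' must refer to the identity as a pair $(\varphi_v,\varphi_e)$, not merely to $\varphi^m_v$. Under the weaker reading, a power could fix all vertices while permuting parallel edges, and then the statement could fail. I would therefore state explicitly that $\id_G$ is the full graph identity, as the paper's phrasing $\varphi^m\neq\id_G$ indicates. With that reading, the argument involves no real obstacle.
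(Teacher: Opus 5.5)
Your proof is correct and is exactly the paper's argument: if $\varphi^m$ fixes a vertex, semiregularity forces $\varphi^m=\id_G$, which fixes every edge and in particular the outgoing star of that vertex. Your remark that $\id_G$ must mean the full identity on vertices and edges is the reading the paper uses implicitly when it concludes that $\varphi^m$ fixes every outgoing edge.
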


\begin{proof}
Let $\varphi\in\Aut(G)$ be semiregular. Let $m>0$ and suppose that $\varphi^m$ fixes a vertex.
By semiregularity, $\varphi^m=\id_G$. Hence $\varphi^m$ fixes every outgoing edge of every vertex.
Thus $\varphi$ is vertex-rigid.
\end{proof}
\begin{proposition}\label{pro: synch and automorphisms}
Let $\mathcal A=(Q,\Sigma,\delta)$ be a synchronizing automaton, and let $\varphi\in\End(\mathcal A)$.
If $u\in\Sigma^*$ is a synchronizing word and $Q\cdot u=\{p\}$, then $\varphi(p)=p$.
In particular:
\begin{itemize}
\item if $\mathcal A$ is strongly connected, then $\End(\mathcal A)=\{\id\}$;
\item if $\mathcal A$ admits a fixed-point-free automorphism, i.e.\ a derangement of $Q$, then $\mathcal A$ is not synchronizing.
\end{itemize}
\end{proposition}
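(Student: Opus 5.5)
The main claim follows from the commutation relation defining an endomorphism. Let $u$ be a synchronizing word with $Q\cdot u=\{p\}$, and let $\varphi\in\End(\mathcal A)$. By induction on the length of words, the defining identity $\delta(\varphi(q),a)=\varphi(\delta(q,a))$ extends to all words $w\in\Sigma^*$: we get $\varphi(q)\cdot w=\varphi(q\cdot w)$. Apply this with $w=u$ and an arbitrary $q\in Q$. The left-hand side $\varphi(q)\cdot u$ lies in $Q\cdot u=\{p\}$, so it equals $p$. The right-hand side is $\varphi(q\cdot u)=\varphi(p)$. Hence $\varphi(p)=p$.

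For the first bullet, I read "strongly connected" as meaning that the underlying graph of $\mathcal A$ is strongly connected. Since $\mathcal A$ is synchronizing, the previous step gives a state $p$ fixed by every endomorphism. Given any $q\in Q$, strong connectivity provides a word $v$ with $p\cdot v=q$. Then
\[
\varphi(q)=\varphi(p\cdot v)=\varphi(p)\cdot v=p\cdot v=q,
\]
so $\varphi=\id$. This is exactly the propagation along paths used in Proposition~\ref{prop:strongly-connected-endo-auto}, but it is easier here: colour compatibility means we never need to choose preimages of edges, because transitions are determined by letters.

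The second bullet is the contrapositive of the main claim. If $\mathcal A$ were synchronizing, every automorphism would fix the synchronized state $p$, so a fixed-point-free automorphism cannot exist.

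There is no real obstacle. The only points needing care are the induction extending the commutation relation from letters to words, and making explicit that the first bullet uses strong connectivity of $\mathcal A$ itself, so that every state is reachable from $p$.
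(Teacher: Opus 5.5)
Your proof is correct and follows essentially the same route as the paper. The paper also obtains $\varphi(p)=p$ from $\varphi(Q)\cdot u\subseteq Q\cdot u=\{p\}$, which is your pointwise computation with an arbitrary $q$. It then propagates the fixed state along a word $v$ with $p\cdot v=q$ in the strongly connected case, and gets the derangement statement by contradiction with the first part.
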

\begin{proof}
Since $\varphi$ is an endomorphism, it commutes with the action of words. Hence
\[
\varphi(p)
=
\varphi(Q\cdot u)
=
\varphi(Q)\cdot u
\subseteq
Q\cdot u
=
\{p\}.
\]
Thus $\varphi(p)=p$. Assume now that $\mathcal A$ is strongly connected. Let $q\in Q$. Choose $v\in\Sigma^*$ such that
$p\cdot v=q$. Then
\[
\varphi(q)
=
\varphi(p\cdot v)
=
\varphi(p)\cdot v
=
p\cdot v
=
q.
\]
Hence $\varphi=\id$.
Finally, suppose that $\psi\in\Aut(\mathcal A)$ is fixed-point-free. If $\mathcal A$ were synchronizing,
the first part applied to $\psi$ would give a state fixed by $\psi$, a contradiction. Therefore
$\mathcal A$ is not synchronizing.
\end{proof}
Note that if $\chi$ is an automaton coloring of a strongly connected $k$-out graph $G$, then the automaton $\chi(G)$ is also strongly connected. We now study the existence of endomorphisms compatible with a coloring.

\begin{lemma}\label{lem:compatible-coloring-partial}
Let $G$ be a $k$-out directed graph and let $\varphi\in \End(G)$. Fix $v\in V(G)$, and let $B_\varphi(v)$ be its basin and $C_v\subseteq B_\varphi(v)$ the corresponding limit cycle. Assume that $\varphi$ is vertex-rigid at every vertex of $C_v$. Then, for every permutation $\sigma\in S_k$, there exists a partial coloring
\[
\chi_{\varphi,v}(\sigma):\delta_{B_\varphi(v)}(V(G))\to [k]
\]
such that $\chi_{\varphi,v}(\sigma)(\varphi(e))=\chi_{\varphi,v}(\sigma)(e)$ for every $e\in \delta_{B_\varphi(v)}(V(G))$.
\end{lemma}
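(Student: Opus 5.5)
The plan is to build the coloring in two stages: first on the out-stars of the limit cycle $C_v$, and then on the out-stars of the tree part of the basin. The only facts needed are that $\varphi$ is a $k$-out endomorphism, i.e.\ a bijection from the outgoing star of each $u$ onto that of $\varphi(u)$, and the vertex-rigidity hypothesis on $C_v$.

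\emph{Step 1: the cycle.} Write $C_v=\{c_0,c_1=\varphi(c_0),\dots,c_{L-1}=\varphi^{L-1}(c_0)\}$ with $c_0,\dots,c_{L-1}$ pairwise distinct and $\varphi^L(c_0)=c_0$.
\begin{itemize}
\item Fix once and for all an enumeration $e_1,\dots,e_k$ of $\delta_{c_0}(V(G))$ and set $\chi(e_j):=\sigma(j)$. This is a bijection onto $[k]$.
\item For $0\le i<L$, the composite $\varphi^i$ restricts to a bijection $\delta_{c_0}(V(G))\to\delta_{c_i}(V(G))$, since it is a composite of local out-bijections. So each edge $f$ leaving $c_i$ is $\varphi^i(e_j)$ for a unique $j$, and we set $\chi(f):=\sigma(j)$.
\item Because the $c_i$ are distinct, this is well defined, and it is a bijection on each out-star.
\item Invariance $\chi(\varphi(f))=\chi(f)$ is immediate for $f$ leaving $c_i$ with $i<L-1$.
\item For $i=L-1$ we have $\varphi(f)=\varphi^L(e_j)$, and here we invoke vertex-rigidity at $c_0$. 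Since $\varphi^L(c_0)=c_0$, rigidity gives $\varphi^L(e_j)=e_j$, so $\chi(\varphi(f))=\sigma(j)=\chi(f)$.
\end{itemize}
This wrap-around consistency is the one genuinely delicate point, and it is exactly where the hypothesis is used. Rigidity is only needed at $c_0$, though it holds on all of $C_v$.

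\emph{Step 2: the trees.} For $u\in B_\varphi(v)$, let $d(u)$ be the least $n\ge0$ with $\varphi^n(u)\in C_v$. For an edge $e$ with $s(e)=u$, define $\chi(e):=\chi(\varphi^{d(u)}(e))$, using the values from Step 1; this agrees with Step 1 when $d(u)=0$.
\begin{itemize}
\item Since $\varphi^{d(u)}$ maps the out-star of $u$ bijectively onto the out-star of the cycle vertex $\varphi^{d(u)}(u)$, the restriction of $\chi$ to $\delta_u(V(G))$ is a bijection onto $[k]$.
\item Edges leaving basin vertices are mapped by $\varphi$ to edges leaving basin vertices, because $\varphi(B_\varphi(v))\subseteq B_\varphi(v)$, so the identity below makes sense.
\item For invariance when $d(u)\ge1$: we have $d(\varphi(u))=d(u)-1$, hence $\chi(\varphi(e))=\chi(\varphi^{d(u)-1}(\varphi(e)))=\chi(\varphi^{d(u)}(e))=\chi(e)$.
\item When $d(u)=0$, invariance is Step 1.
\end{itemize}

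This yields the required partial coloring $\chi_{\varphi,v}(\sigma)$ on $\delta_{B_\varphi(v)}(V(G))$, one for each $\sigma\in S_k$. Distinct $\sigma$ give distinct colorings of the out-star of $c_0$. The main obstacle is the well-definedness around the cycle in Step 1, which is resolved precisely by vertex-rigidity; without it, $\varphi^L$ could permute the star of $c_0$ nontrivially, as in Lemma~\ref{lem: nec condition compatibility}.
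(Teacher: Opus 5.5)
Your proposal is correct and follows essentially the same route as the paper. You color the out-star of one cycle vertex by $\sigma$, transport the colors around $C_v$ via the local out-bijections $\varphi^i$, use vertex-rigidity at that single vertex to close the cycle consistently, and pull back to the rest of the basin via $\varphi^{d(u)}$. The only difference is bookkeeping: you fix the canonical exponent $0\le i<L$ and spend rigidity on the wrap-around invariance, while the paper allows any exponent $m$ and spends rigidity on well-definedness. This makes explicit the invariance check that the paper leaves to ``by construction''.
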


\begin{proof}
Choose a vertex $p\in C_v$, and let $d\ge 1$ be its period under $\varphi$. Let $e_1,\dots,e_k$ be the $k$ edges with source $p$, and set $\chi_{\varphi,v}(\sigma)(e_i):=\sigma(i)$ for $i=1,\dots,k$. If $u\in C_v$ and $e$ is an edge with source $u$, then, since $\varphi$ acts cyclically on $C_v$ and is locally bijective on outgoing stars, there exist $m\ge 0$ and $i\in\{1,\dots,k\}$ such that $\varphi^m(p)=u$ and $\varphi^m(e_i)=e$. Define $\chi_{\varphi,v}(\sigma)(e):=\sigma(i)$. This is well defined: if also $\varphi^{m'}(e_j)=e$, then $m'-m$ is a multiple of $d$, say $m'=m+td$. Since $\varphi^d(p)=p$ and $\varphi$ is vertex-rigid at $p$, the map $\varphi^d$ fixes every edge with source $p$, hence $\varphi^{td}(e_i)=e_i$. Therefore $e_j=e_i$ and so $\sigma(j)=\sigma(i)$. Now let $u\in B_\varphi(v)\setminus C_v$, and let $e$ be an edge with source $u$. Choose $m\ge 0$ minimal such that $\varphi^m(u)\in C_v$, and define
\[
\chi_{\varphi,v}(\sigma)(e):=\chi_{\varphi,v}(\sigma)(\varphi^m(e)).
\]
This is well defined because the minimal such $m$ is unique. By construction, $\chi_{\varphi,v}(\sigma)(\varphi(e))=\chi_{\varphi,v}(\sigma)(e)$ whenever both sides are defined. Hence $\chi_{\varphi,v}(\sigma)$ is a partial coloring of $\delta_{B_\varphi(v)}(V(G))$ compatible with $\varphi$.
\end{proof}

We have the following lemma.

\begin{lemma}\label{lem:compatible-coloring-full}
Let $\varphi\in \End(G)$ be a vertex-rigid endomorphism, and let $B_\varphi(v_1),\dots,B_\varphi(v_\ell)$ be the partition of $V(G)$ into $\varphi$-basins. For each basin $B_\varphi(v_j)$ fix a permutation $\sigma_j\in S_k$. Then there exists an automaton coloring $\chi_\varphi(\sigma_1,\dots,\sigma_\ell)$ compatible with $\varphi$.
\end{lemma}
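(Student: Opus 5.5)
The plan is to glue together the partial colorings supplied by Lemma~\ref{lem:compatible-coloring-partial}, one basin at a time. Since $\varphi$ is vertex-rigid on all of $V(G)$, it is in particular vertex-rigid at every vertex of each limit cycle $C_{v_j}$. Hence, for each $j$, Lemma~\ref{lem:compatible-coloring-partial} with $\sigma=\sigma_j$ yields a partial coloring
\[
\chi_j:=\chi_{\varphi,v_j}(\sigma_j):\delta_{B_\varphi(v_j)}(V(G))\to[k]
\]
with $\chi_j(\varphi(e))=\chi_j(e)$. We then set $\chi_\varphi(\sigma_1,\dots,\sigma_\ell):=\bigcup_j\chi_j$. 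This is a well-defined map on all of $E(G)$, because the basins partition $V(G)$, so the sets $\delta_{B_\varphi(v_j)}(V(G))$ partition $E(G)$ according to the source of each edge.

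Two properties must then be checked: that the glued map is an automaton coloring, and that it is compatible with $\varphi$.

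\emph{Compatibility.} A basin is forward invariant under $\varphi$: if $u\in B_\varphi(v_j)$, then $\varphi(u)$ eventually enters $C_{v_j}$, so $\varphi(u)\in B_\varphi(v_j)$. Consequently, for an edge $e$ with source in $B_\varphi(v_j)$, the edge $\varphi(e)$ has source $\varphi(s(e))$ in the same basin. Both $e$ and $\varphi(e)$ therefore lie in the domain of $\chi_j$, and compatibility reduces to the corresponding property of $\chi_j$. One point needs checking: the construction in Lemma~\ref{lem:compatible-coloring-partial} must give $\chi_j(\varphi(e))=\chi_j(e)$ for all $e$, not only for those $e$ whose image lies on the cycle. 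I will verify this directly from the definition by minimal entry time $m$. If $m(\varphi(u))=m(u)-1$, the identity is immediate. If $u$ is already on the cycle, it follows from the cycle definition, since $\varphi^{m+1}(e_i)=\varphi(e)$.

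\emph{Bijectivity on each outgoing star.} This is the step where some care is required. On the cycle, the outgoing star of $u=\varphi^m(p)$ is the bijective image $\varphi^m(\{e_1,\dots,e_k\})$, because $\varphi$ is locally bijective on outgoing stars. Its edges receive the colors $\sigma_j(1),\dots,\sigma_j(k)$, which are all distinct. Off the cycle, the map $e\mapsto\varphi^m(e)$ restricted to the outgoing star of $u$ is a bijection onto the outgoing star of $\varphi^m(u)\in C_{v_j}$. The colors on the star of $u$ are therefore pulled back bijectively from a star already colored bijectively onto $[k]$. Hence every vertex sees each color exactly once, and $\chi_\varphi(\sigma_1,\dots,\sigma_\ell)$ is an automaton coloring.

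This completes the plan. I do not expect a serious obstacle beyond making explicit the forward invariance of basins and the local-bijectivity argument above.
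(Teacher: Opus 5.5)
Your proposal is correct and follows the paper's route: apply Lemma~\ref{lem:compatible-coloring-partial} on each basin and glue the partial colorings, using that the sets $\delta_{B_\varphi(v_j)}(V(G))$ partition $E(G)$. The paper leaves three points as ``by construction'', and you verify them explicitly: forward invariance of basins, the identity $\chi_j(\varphi(e))=\chi_j(e)$ via the minimal entry time, and bijectivity on each outgoing star via local bijectivity of $\varphi^m$.
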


\begin{proof}
Since $\varphi$ is vertex-rigid, Lemma~\ref{lem:compatible-coloring-partial} applies to each basin $B_\varphi(v_j)$ and yields a partial coloring
\[
\chi_{\varphi,v_j}(\sigma_j):\delta_{B_\varphi(v_j)}(V(G))\to [k]
\]
compatible with $\varphi$. As the basins form a partition of $V(G)$, the sets $\delta_{B_\varphi(v_j)}(V(G))$ are pairwise disjoint and their union is $E(G)$. Hence the partial colorings glue together to a unique coloring $\chi_\varphi(\sigma_1,\dots,\sigma_\ell):E(G)\to [k]$, which is an automaton coloring and is compatible with $\varphi$.
\end{proof}

\begin{proposition}\label{prop: rigid in end of automaton}
Let $\mathcal{C}(G)$ denote the set of (automata) colorings of $G$. Then
\[
\REnd(G) = \bigcup_{\chi \in \mathcal{C}(G)} \End(\chi(G)).
\]
\end{proposition}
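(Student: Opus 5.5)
We prove the two inclusions separately. Throughout, an element of $\End(\chi(G))$ is identified with the $k$-out endomorphism of $G$ it induces, as explained before Lemma~\ref{lem:compatible-coloring-partial}. The edge map of this induced endomorphism is forced: it sends the edge $v\xrightarrow{a}v\cdot a$ to the edge $\varphi(v)\xrightarrow{a}\varphi(v)\cdot a$. So a graph endomorphism lies in $\End(\chi(G))$ exactly when it is compatible with $\chi$, that is, when $\chi(\varphi_e(e))=\chi(e)$ for every edge $e$.

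\emph{Inclusion $\supseteq$.} Let $\chi\in\mathcal C(G)$ and $\varphi\in\End(\chi(G))$, regarded in $\End(G)$. We show $\varphi$ is vertex-rigid at every vertex.
\begin{itemize}
\item Suppose $\varphi^m(v)=v$ for some $m>0$, and let $e$ be an edge with source $v$.
\item Compatibility is preserved under composition, so $\chi(\varphi^m(e))=\chi(e)$.
\item The edge $\varphi^m(e)$ also has source $\varphi^m(v)=v$.
\item Since $\chi$ restricted to the outgoing star of $v$ is injective, $\varphi^m(e)=e$.
\end{itemize}
Hence $\varphi\in\REnd(G)$. The definition of vertex-rigid requires a non-trivial endomorphism. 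I will read $\REnd(G)$ as including the identity, which is vacuously rigid and lies in every $\End(\chi(G))$; otherwise the identity must be removed from the right-hand side. I will flag this convention explicitly.

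\emph{Inclusion $\subseteq$.} Let $\varphi\in\REnd(G)$. Apply Lemma~\ref{lem:compatible-coloring-full} with arbitrary permutations $\sigma_1,\dots,\sigma_\ell$, for instance all equal to the identity. This gives an automaton coloring $\chi=\chi_\varphi(\sigma_1,\dots,\sigma_\ell)$ compatible with $\varphi$.

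It remains to check that $\varphi_v$ is an automaton endomorphism of $\chi(G)$. Let $e$ be the edge from $v$ with color $a$. Then $\varphi_e(e)$ has source $\varphi(v)$, target $\varphi(t(e))$ and color $a$. Therefore
\[
\delta_\chi(\varphi(v),a)=\varphi(\delta_\chi(v,a)).
\]
So $\varphi\in\End(\chi(G))$.

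The only delicate point is the well-definedness of the compatible coloring built along basins. That is exactly the content of Lemma~\ref{lem:compatible-coloring-partial}, which uses vertex-rigidity on the limit cycles, so this step is already handled.
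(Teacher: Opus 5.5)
Your proof is correct and follows the paper's route. The inclusion $\subseteq$ is exactly an application of Lemma~\ref{lem:compatible-coloring-full}. For $\supseteq$, you supply the short verification that the paper leaves implicit in the phrase ``the embeddings $\End(\chi(G))\hookrightarrow\REnd(G)$'': if $\varphi^m(v)=v$, then $\varphi^m(e)$ and $e$ leave $v$ with the same color, so injectivity of $\chi$ on the outgoing star of $v$ gives $\varphi^m(e)=e$.

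Your caveat about the identity is also apt. Because the definition of vertex-rigid says ``non-trivial'', the identity lies in every $\End(\chi(G))$ but not literally in $\REnd(G)$, so the stated equality holds only under the convention you adopt.
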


\begin{proof}
The inclusion of the right-hand side in the left-hand side follows directly from the embeddings
\(\End(\chi(G)) \hookrightarrow \REnd(G)\) for each \(\chi \in \mathcal{C}(G)\).  For the reverse inclusion, note that by Lemma~\ref{lem:compatible-coloring-full}, for any vertex-rigid endomorphism
\(\varphi \in \REnd(G)\) and any fixed tuple \(\sigma_1, \dots, \sigma_\ell \in S_k\), there exists a coloring
\(\chi_{\varphi}(\sigma_1, \dots, \sigma_\ell)\) that is compatible with \(\varphi\). 
That is, \(\varphi\) is also an endomorphism of the colored graph
\(\chi_{\varphi}(\sigma_1, \dots, \sigma_\ell)(G)\).
\end{proof}
We have the following theorem.
\begin{theorem}\label{theo: main}
Let $G$ be a strongly connected $k$-out totally synchronizing graph.
Then $\Aut(G)$ contains no vertex-rigid automorphism, in particular, it has no semiregular elements.
\end{theorem}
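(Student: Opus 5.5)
The plan is to argue by contradiction. Assume $G$ is strongly connected and totally synchronizing, and that some nontrivial $\varphi\in\Aut(G)$ is vertex-rigid. From $\varphi$ we build a coloring that is forced to be non-synchronizing.

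First we reduce to semiregular elements. By Proposition~\ref{prop:vertex-rigid-semiregular}, since $G$ is strongly connected, $\End(G)=\Aut(G)$, and a nontrivial vertex-rigid element is exactly a semiregular automorphism. Conversely, Proposition~\ref{prop:semiregular-vertex-rigid} shows that semiregular automorphisms are vertex-rigid. So the two formulations in the statement are equivalent, and it suffices to rule out a nontrivial vertex-rigid $\varphi$.

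Next we produce a compatible coloring. Since $\varphi$ is an automorphism, every vertex lies on its own limit cycle. Hence the basins of $\varphi$ are exactly the $\langle\varphi\rangle$-orbits on $V(G)$, and every vertex of a limit cycle satisfies the vertex-rigidity hypothesis. Lemma~\ref{lem:compatible-coloring-full} then gives an automaton coloring $\chi=\chi_\varphi(\sigma_1,\dots,\sigma_\ell)$, for any choice of permutations, with which $\varphi$ is compatible. In other words $\varphi_v\in\Aut(\chi(G))$; this is the content of Proposition~\ref{prop: rigid in end of automaton}.

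Finally we derive the contradiction. By total synchronization, $\chi(G)$ is synchronizing. It is also strongly connected, since $G$ is. Proposition~\ref{pro: synch and automorphisms} then forces $\End(\chi(G))=\{\id\}$, so $\varphi_v=\id_{V(G)}$. This is the step I expect to need care, because $\varphi$ is nontrivial as a graph automorphism but could a priori act trivially on vertices while permuting parallel edges. We handle it as follows. If $\varphi_v=\id$, then $\varphi$ fixes every vertex. Vertex-rigidity with $m=1$ then says $\varphi$ fixes every outgoing edge of every vertex, so $\varphi_e=\id$ as well and $\varphi=\id_G$. This contradicts nontriviality. (Alternatively, Lemma~\ref{lem: nec condition compatibility} excludes a compatible $\varphi\neq\id$ with a fixed vertex.)

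Hence no vertex-rigid automorphism exists, and in particular $\Aut(G)$ has no semiregular element.
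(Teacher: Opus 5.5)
Your proof is correct and follows the paper's own argument. A vertex-rigid (equivalently, semiregular) $\varphi$ yields a compatible coloring via Lemma~\ref{lem:compatible-coloring-full} and Proposition~\ref{prop: rigid in end of automaton}, and Proposition~\ref{pro: synch and automorphisms} applied to the strongly connected synchronizing automaton $\chi(G)$ then forces $\varphi_v=\id$. Your extra step, using vertex-rigidity with $m=1$ to upgrade $\varphi_v=\id$ to $\varphi=\id_G$, makes explicit a point the paper passes over with ``Thus $\varphi=\id$''.
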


\begin{proof}
Let $\varphi\in\Aut(G)$ be semiregular. Then $\varphi$ is vertex-rigid.
By Proposition~\ref{prop: rigid in end of automaton}, there exists a coloring $\chi$ of $G$ such that
$\varphi\in\End(\chi(G))$.
Since $G$ is strongly connected and totally synchronizing, $\chi(G)$ is strongly connected and synchronizing.
By Proposition~\ref{pro: synch and automorphisms}, $\End(\chi(G))=\{\id\}$.
Thus $\varphi=\id$, contradiction.
\end{proof}
The following observation isolates the mechanism behind the case $k=2$, $p=3$ announced by Pribavkina~\cite{PribavkinaSandGAL2019}.

\begin{lemma}\label{lem:prime-order-derangement}
Let $G$ be a strongly connected $k$-out graph, and let $\varphi\in\Aut(G)$ have prime order $p>k$. Then either $\varphi=\id$ or $\phi$ is a derangement.
\end{lemma}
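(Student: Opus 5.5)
Assume $\varphi\neq\id$ has prime order $p>k$, and suppose for contradiction that $\varphi$ fixes some vertex $v$. Write $F:=\FixV{\varphi}$. The aim is to show $F=V(G)$, and then to deal separately with the case where $\varphi$ fixes every vertex but is not the identity.

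\emph{Step 1: $F$ is closed under out-neighbours.} Take $v\in F$. Since $\varphi$ is an automorphism fixing $v$, $\varphi_e$ permutes the outgoing star $\delta_v(V(G))$, which has exactly $k$ edges. The subgroup $\langle\varphi\rangle$ has order $p$, so each of its orbits on this star has size $1$ or $p$. Because $p>k$, no orbit of size $p$ fits in a $k$-element set. Hence $\varphi$ fixes every edge leaving $v$, and in particular fixes every out-neighbour $t(e)$. So $F$ is closed under outgoing edges.

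\emph{Step 2: $F=V(G)$ and $\varphi$ fixes every edge.} $F$ is nonempty and closed under outgoing edges, and $G$ is strongly connected, so $F=V(G)$. Applying Step 1 at every vertex, $\varphi_e$ fixes every outgoing star pointwise. Every edge lies in the outgoing star of its source, so $\varphi_e=\id$. Thus $\varphi=\id$, contradicting $\varphi\neq\id$. Therefore a nonidentity $\varphi$ of prime order $p>k$ fixes no vertex, i.e.\ it is a derangement.

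This also covers the potentially awkward case $\varphi_v=\id$ with $\varphi_e$ permuting parallel edges: the orbit-size argument of Step 1 already excludes it, since such permutations happen inside stars of size $k<p$.

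The only real point is the orbit-counting in Step 1; the rest is the same strong-connectivity closure argument used in Proposition~\ref{prop:vertex-rigid-semiregular}. Note also that all nonidentity powers of $\varphi$ have order $p$, so the argument shows $\varphi$ is semiregular. Combined with Theorem~\ref{theo: main}, this yields Corollary~\ref{cor:prime-divisor-greater-than-k} via Cauchy's theorem.
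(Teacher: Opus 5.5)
Your proposal is correct and follows essentially the same route as the paper's proof. Both arguments use the orbit sizes $1$ or $p$ of $\langle\varphi\rangle$ on the $k$-element outgoing star of a fixed vertex, together with $p>k$, to show that the fixed-vertex set is closed under outgoing edges. Strong connectivity then forces every vertex, and hence every edge, to be fixed.
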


\begin{proof}
Suppose that $\varphi$ fixes a vertex $v$. Since $\varphi$ is an automorphism, it permutes the $k$ outgoing edges of $v$. As $\varphi$ has order $p$, the induced permutation on the outgoing star of $v$ has order dividing $p$. Since $p$ is prime, every orbit has size either $1$ or $p$. But $p>k$, so no orbit can have size $p$. Hence every outgoing edge of $v$ is fixed by $\varphi$. If $e$ is such an edge, then \(\varphi_v(t(e))=t(\varphi_e(e))=t(e),\) so the target of $e$ is also fixed. Therefore the set of fixed vertices of $\varphi$ is closed under outgoing edges. Since $G$ is strongly connected, this set must be either empty or all of $V(G)$. As it contains $v$, every vertex is fixed.

Applying the same argument at each vertex, we conclude that every outgoing edge of every vertex is fixed. Hence $\varphi=\id$.
\end{proof}

As an immediate consequence of Theorem~\ref{theo: main}, we obtain the following corollary, which extends the case $k=2$, $p=3$ announced in~\cite{PribavkinaSandGAL2019}.

\begin{corollary}\label{cor:prime-divisor-greater-than-k}
Let $G$ be a strongly connected $k$-out graph. If $|\Aut(G)|$ is divisible by a prime $p>k$, then $G$ is not totally synchronizing.
\end{corollary}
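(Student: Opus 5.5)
By Cauchy's theorem, the hypothesis $p \mid |\Aut(G)|$ gives an element $\varphi\in\Aut(G)$ of order exactly $p$. The plan is to show that $\varphi$ is semiregular and then invoke Theorem~\ref{theo: main}.

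First, $\varphi\neq\id$, since its order is $p>1$. By Lemma~\ref{lem:prime-order-derangement}, which uses strong connectivity and $p>k$, $\varphi$ is therefore a derangement of $V(G)$, i.e.\ $\Fix_V(\varphi)=\emptyset$.

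Next we check semiregularity. Let $m>0$ with $\varphi^m\neq\id_G$. Then $p\nmid m$, so $\varphi^m$ generates the same cyclic group $\langle\varphi\rangle$ and also has order $p$. Applying Lemma~\ref{lem:prime-order-derangement} to $\varphi^m$ shows that $\varphi^m$ is a derangement. Equivalently, $\langle\varphi\rangle$ has prime order, so every orbit on $V(G)$ has size $1$ or $p$, and size $1$ is excluded because $\Fix_V(\varphi)=\emptyset$. Hence $\varphi$ is a semiregular automorphism.

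By Theorem~\ref{theo: main}, a strongly connected totally synchronizing graph has no semiregular elements in $\Aut(G)$, so $G$ is not totally synchronizing. Alternatively, one can argue directly: Proposition~\ref{prop: rigid in end of automaton} gives a coloring $\chi$ with $\varphi\in\End(\chi(G))$, and since $\varphi$ is a fixed-point-free automorphism of $\chi(G)$, Proposition~\ref{pro: synch and automorphisms} shows that $\chi(G)$ is not synchronizing.

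The only delicate point is the case where $\varphi$ acts trivially on vertices but nontrivially on parallel edges. Lemma~\ref{lem:prime-order-derangement} already handles this: such a $\varphi$ would fix a vertex and hence, by that lemma, equal $\id$. So no real obstacle remains.
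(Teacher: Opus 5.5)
Your proposal is correct and follows the paper's proof: Cauchy's theorem gives an element of order $p$, Lemma~\ref{lem:prime-order-derangement} makes it a derangement, prime order then forces semiregularity, and Theorem~\ref{theo: main} concludes. Your extra checks are also correct and simply spell out what the paper leaves implicit: every non-identity power $\varphi^m$ again has order $p$, and a vertex-trivial element of order $p>k$ must be the identity.
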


\begin{proof}
By Cauchy's theorem, $\Aut(G)$ contains an element $\phi$ of order $p$. By Lemma~\ref{lem:prime-order-derangement}, $\phi$ is a derangement. Since $\phi$ has prime order $p$, every nontrivial cycle of $\phi$ has length $p$. Therefore $\phi$ is semiregular. By Theorem~\ref{theo: main}, $G$ is not totally synchronizing.
\end{proof}

It is interesting to note the connection with a classical conjecture of Marušič~\cite{Marusic1981}, also called the Polycirculant Conjecture, which asserts that every finite vertex-transitive (di)graph admits a nontrivial semiregular automorphism. This conjecture imposes vertex-transitivity on the structure of the automorphism group. However, it does not apply in our setting, since totally synchronizing graphs are never vertex-transitive. In a private communication and in a conference talk~\cite{PribavkinaSandGAL2019}, Gusev and Pribavkina observed the following.
\begin{proposition}
Let $G$ be a strongly connected $k$-out graph. If $\End(G)$ is vertex transitive on $V(G)$, then $G$ is not totally synchronizing.
\end{proposition}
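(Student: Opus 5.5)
We argue by contradiction: assuming $G$ is strongly connected, totally synchronizing, and that $\End(G)$ acts vertex-transitively on $V(G)$, we will produce a nontrivial semiregular automorphism of $G$ and contradict Theorem~\ref{theo: main}. By Proposition~\ref{prop:strongly-connected-endo-auto}, $\End(G)=\Aut(G)$, so the hypothesis says that $\Aut(G)$ is transitive on $V(G)$. We may assume $|V(G)|\ge 2$: a single vertex with $k$ loops is trivially totally synchronizing, and the statement is meant for nontrivial graphs.

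The first step is to choose a prime $p$ dividing $|V(G)|$. By the orbit--stabilizer theorem, $|V(G)|$ divides $|\Aut(G)|$, so $p$ divides $|\Aut(G)|$. If $p>k$, Corollary~\ref{cor:prime-divisor-greater-than-k} finishes immediately. In general, however, we cannot pick $p>k$, and we need a semiregular element directly. This is exactly the content of the Polycirculant Conjecture, which is open. So we should avoid needing a semiregular element in $\Aut(G)$, and instead argue with a suitably chosen coloring.

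The more robust route uses Theorem~\ref{thm:friedman-weight} and proceeds in three steps.
\begin{enumerate}
\item We first show that the integer-normalized Perron--Frobenius vector is $\mathbf w=\mathbf 1$. Any automorphism preserves in-multiplicities, so $\pi\circ\varphi_v$ is again a positive left eigenvector for $k$. By uniqueness up to scaling, and since both vectors have the same sum, $\pi\circ\varphi_v=\pi$. Transitivity then forces $\pi$ to be constant, so $\mathbf w=\mathbf 1$ and $G$ is Eulerian.
\item Next we take any coloring $\chi$. Theorem~\ref{thm:friedman-weight} gives a partition of $V(G)$ into maximal synchronizing sets of equal size $m$. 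If $\chi(G)$ is synchronizing, then $m=|V(G)|$.
\item Finally, we exhibit a coloring that is not synchronizing. The idea is to transport a coloring along the transitive group. We pick a vertex $v$ and, for each $u$, an automorphism $\varphi_u$ with $\varphi_u(v)=u$. We then define $\chi$ at $u$ by pulling back a fixed ordering of the out-star of $v$ via $\varphi_u$.
\end{enumerate}

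The main obstacle is step 3, namely showing that the transported coloring is not synchronizing. This would require the $\varphi_u$ to be coherent, and in general they are not; for instance, a synchronizing Cayley-type graph coloring can exist. I would therefore fall back on the claim as stated by Gusev and Pribavkina, which is presumably proved by choosing $\chi$ so that some nontrivial automorphism becomes compatible with $\chi$, and then applying Proposition~\ref{pro: synch and automorphisms}. Concretely, I would first try to find a semiregular element via a regular subgroup when $\Aut(G)$ has one. Failing that, I would try to show that any vertex-transitive strongly connected $k$-out graph admits a coloring $\chi$ for which $\chi(G)$ has a nontrivial congruence whose quotient is permutative, so that $\chi(G)$ is not synchronizing. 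I expect this step to be the genuine difficulty.
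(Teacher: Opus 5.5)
There is a genuine gap: your proposal never produces a non-synchronizing coloring. Step 1 is correct and is the heart of the matter. The paper gets there more directly: automorphisms preserve in-degree, so transitivity makes every in-degree equal to some $d$, and $k|V(G)|=|E(G)|=d|V(G)|$ gives $d=k$. The rest does not close the argument. Step 2 is vacuous. Step 3 is abandoned. The fallback to a semiregular element, or to an unproved claim attributed to Gusev--Pribavkina, is not an argument. As you note yourself, producing a semiregular automorphism of a vertex-transitive graph is the open Polycirculant Conjecture, so that route cannot be expected to finish.

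The missing idea is that Eulerian-ness alone already gives a non-synchronizing coloring; no further use of the symmetry is needed.
\begin{enumerate}
\item Form the bipartite multigraph with parts $V^{\mathrm{out}}$ and $V^{\mathrm{in}}$ (two copies of $V(G)$), with one edge $u^{\mathrm{out}}w^{\mathrm{in}}$ for each edge $u\to w$ of $G$.
\item Since $G$ is $k$-out and Eulerian, this multigraph is $k$-regular. By K\"onig's theorem (iterated Hall), its edges split into $k$ perfect matchings $M_1,\dots,M_k$.
\item Color each edge of $G$ by the index of the matching containing it. This is an automaton coloring in which every letter acts as a bijection of $V(G)$.
\item Hence every word acts as a permutation, and as soon as $|V(G)|\ge 2$ no word is a reset word.
\end{enumerate}
This is exactly Kari's Lemma~1, which the paper invokes; it is also the bijection between permutation colorings and labeled $1$-factorizations in Lemma~\ref{lem:bip-eul-correspondence}. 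With it, your Step 1 plus one line finishes the proof. Your caveat about $|V(G)|=1$ is well taken: the one-vertex bouquet is vertex-transitive and trivially totally synchronizing, and the paper's argument tacitly assumes $|V(G)|\ge 2$ as well.
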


\begin{proof}
Since $G$ is strongly connected, Proposition~\ref{prop:strongly-connected-endo-auto} gives $\End(G)=\Aut(G)$. Thus $\Aut(G)$ is vertex transitive. Let $p,q\in V(G)$. Choose $\varphi\in\Aut(G)$ such that $\varphi(p)=q$. Since automorphisms preserve incoming multiplicities, we have
\(
|\delta_V(p)|=|\delta_{\varphi_v(V)}(\varphi_v(p))|=|\delta_V(q)|.
\)
Hence all vertices have the same in-degree, say $d$. Since $G$ is $k$-out, we have 
\[
k|V|=|E|=\sum_{v\in V} |\delta_V(v)|=d|V|.
\]
Therefore $d=k$, so every vertex has in-degree $k$ as well. Hence $G$ is Eulerian. By~\cite[Lemma~1]{Kari2003}, the graph $G$ is not totally synchronizing.
\end{proof}
We end this subsection with two illustrative $2$-out examples (Figure~\ref{fig:examples}) exhibiting nontrivial vertex-fixing automorphism groups of the kind described in Theorem~\ref{theo: main}.
In the left-hand Eulerian graph, $\Aut(G)$ is generated by the automorphism sending $1\mapsto 1'$, $2\mapsto 2'$ and fixing $0$.
Since the graph is Eulerian, it admits a non-synchronizing coloring by~\cite[Lemma~1]{Kari2003}, so it is not totally synchronizing.
The right-hand graph still has a nontrivial automorphism (the transposition $(1\ 2)$ fixing $0,3,4,5$), but it is totally synchronizing since its
integer-normalized left Perron--Frobenius eigenvector $\pi=(4,3,3,6,3,2)$ is not equipartitionable. To see that $\pi=(4,3,3,6,3,2)$ is not equipartitionable, note that $\sum_{v}\pi(v)=21$. Thus any nontrivial equipartition into $m\ge2$ blocks would require $m\mid 21$, hence $m=3$, and each block would have weight $7$. But the only way to obtain $7$ from the multiset $\{6,4,3,3,3,2\}$ is $4+3$, which cannot produce three disjoint blocks of weight $7$.
Moreover, this last example shows that Corollary~\ref{cor:prime-divisor-greater-than-k} is sharp: the prime $p=2$ divides $|\Aut(G)|=2$ but $p\not>k$.

\begin{figure}[ht]
\centering
\begin{subfigure}[t]{0.5\textwidth}
\centering
\begin{tikzpicture}[
  >=Latex,
  node/.style={circle, draw, thick, minimum size=8mm, inner sep=1pt},
  edge/.style={->, thick, shorten <=2pt, shorten >=2pt}
]
\node[node] (v0) at (0,3.7) {$0$};
\node[node] (v1) at (-1.2,1.5) {$1$};
\node[node] (v1p) at (1.2,1.5) {$1'$};
\node[node] (v2) at (-1.2,0) {$2$};
\node[node] (v2p) at (1.2,0) {$2'$};

\draw[edge] (v0) -- (v1);
\draw[edge] (v0) -- (v1p);

\draw[edge] (v1) -- (v2);
\draw[edge] (v1p) -- (v2p);

\draw[edge] (v2) to[bend right=15] (v0);
\draw[edge] (v2p) to[bend left=15] (v0);

\draw[edge] (v1)  edge[loop left,  looseness=10] (v1);
\draw[edge] (v1p) edge[loop right, looseness=10] (v1p);
\draw[edge] (v2)  edge[loop left,  looseness=10] (v2);
\draw[edge] (v2p) edge[loop right, looseness=10] (v2p);
\end{tikzpicture}
\label{fig:eulerian-fixed-vertex}
\end{subfigure}
\hfill
\begin{subfigure}[t]{0.49\textwidth}
\centering
\begin{tikzpicture}[
  >=Latex,
  node/.style={circle, draw, thick, minimum size=8mm, inner sep=1pt},
  edge/.style={->, thick, shorten <=2pt, shorten >=2pt}
]
\node[node] (v0) at (0, 2.2) {$0$};
\node[node] (v5) at (0,0) {$5$};
\node[node] (v1) at (-3, -1) {$1$};
\node[node] (v2) at (-1, -1) {$2$};
\node[node] (v3) at (-2, 2.2) {$3$};
\node[node] (v4) at ( 2.6, 0) {$4$};

\draw[edge] (v1) edge[loop left, looseness=9] (v1);
\draw[edge] (v2) edge[loop left, looseness=9] (v2);
\draw[edge] (v3) edge[loop above, looseness=9] (v3);

\draw[edge] (v0) to[bend left=12] (v4);
\draw[edge] (v0) -- (v5);

\draw[edge] (v5) -- (v0);
\draw[edge] (v5) to[bend right=12] (v4);

\draw[edge] (v4) to[bend left=15] (v2);
\draw[edge] (v4) to[bend left=45] (v1);

\draw[edge] (v1) to[bend left=10] (v3);

\draw[edge] (v2) to[bend left=10] (v3);

\draw[edge] (v3) to[bend left=10] (v0);

\end{tikzpicture}
\end{subfigure}

\caption{Two $2$-out examples with a non-regular automorphism: an Eulerian graph (left) and a totally synchronizing graph (right).}
\label{fig:examples}
\end{figure}

\subsection{Construction of $k$-out graphs with prescribed quotient and automorphism group}

The following construction is inspired by the theory of voltage and covering graphs, combined with a Frucht-type rigidification; see, for instance, \cite[Ch.~2]{GrossTuckerTGT}. In our setting, the aim is to prescribe a finite automorphism group while keeping a given $k$-out quotient fixed. The point is that the lift construction naturally produces nontrivial fixed-point-free deck transformations. These transformations are vertex-rigid, hence they become automorphisms of suitable colorings; by Proposition~\ref{pro: synch and automorphisms}, the resulting graphs are not totally synchronizing.
The construction is in the spirit of the classical Frucht argument, but the present setting is substantially more constrained: one must preserve simultaneously the $k$-out condition and the blockwise edge multiplicities defining the prescribed quotient. Thus one cannot freely attach arbitrary asymmetric gadgets, and the rigidification requires a more careful analysis.
The main outcome of this subsection is that, after a suitable rigid refinement of the quotient and a compatible voltage assignment, one obtains a strongly connected $k$-out graph $G$ whose quotient is the prescribed graph (up to some added loops) and whose automorphism group, modulo the vertex-trivial automorphisms coming from parallel edges, is isomorphic to a given finite group $H$. In particular, this provides a broad family of strongly connected $k$-out graphs which are not totally synchronizing with a prescribed automorphism group. The first step is proving the following ``rigid refinement" construction.
\begin{proposition}\label{prop:vertex-rigid-refinement-permutation}
Let $G=(V,E,s,t)$ be a finite strongly connected $k$-out graph, with $k\ge 2$.
Assume that there exists a permutation $\tau:V\to V$ such that $|\delta_u(\tau(u))|\ge 1$ for every $u\in V$, and such that the graph $G^-=(V,E^-,s,t)$, obtained by removing exactly one edge from $\delta_u(\tau(u))$ for each $u\in V$, is still strongly connected ($(k-1)$-out graph). Then there exist a finite strongly connected $k$-out graph $\widetilde G=(\widetilde V,\widetilde E,s,t)$ and a congruence $\rho_0$ on $\widetilde G$ such that $\widetilde G/\rho_0 \simeq G$, 
and every automorphism of $\widetilde G$ fixes every vertex. Equivalently, $\Aut(\widetilde G)=\Aut_E(\widetilde G)$.
\end{proposition}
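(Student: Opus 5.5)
Replace each vertex $u\in V$ by a small asymmetric gadget, a directed path of length $L_u$, and use the removed edge $u\to\tau(u)$ as the slot through which the gadget is threaded. Concretely, set $\widetilde V=\{(u,i): u\in V,\ 0\le i< N\}$ with a common length $N$, so that $\rho_0$, the partition into fibres $\{u\}\times[N]$, has classes of equal size. For each edge $e\in E^-$ from $u$ to $w$, add the edges $(u,i)\to(w,0)$ for all $i$. The removed edge of $\delta_u(\tau(u))$ is lifted to edges $(u,i)\to(\tau(u),i+1)$ for $i<N-1$, and $(u,N-1)\to(\tau(u),0)$. Each $(u,i)$ then has exactly $k$ out-edges. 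Every vertex of the fibre of $u$ sends $|\delta_u(w)|$ edges into the fibre of $w$, since the lifted $\tau$-edge always lands in the fibre of $\tau(u)$. Hence $\rho_0$ is lumpable and $\widetilde G/\rho_0\simeq G$.

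Strong connectivity follows from two facts. First, strong connectivity of $G^-$ lets every fibre's level-$0$ vertex reach every other level-$0$ vertex using only $E^-$-lifts, which land at level $0$. Second, because $\tau$ is a permutation, the $\tau$-lifts climb levels along $\tau$-cycles, so each $(w,j)$ is reached from $(\tau^{-j}(w),0)$. If $k=2$ and some vertex has only the $\tau$-edge plus one $E^-$-edge this still works, since $G^-$ is assumed strongly connected.

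Rigidity is the main obstacle, and I will break symmetry by in-degree profiles. A vertex $(w,0)$ receives all $E^-$-lifts, which is $N\cdot|\delta_V(w)\cap E^-|$ edges plus one $\tau$-edge. A vertex $(w,j)$ with $j\ge1$ has in-degree exactly $1$. So for $N\ge 2$ the level-$0$ vertices are distinguished as exactly those of in-degree $\ge2$ (each $w$ has at least one incoming $E^-$-edge by strong connectivity of $G^-$). An automorphism $\varphi$ therefore preserves level $0$, and hence the "chains" $(u,0)\to(\tau(u),1)\to(\tau^2(u),2)\to\cdots$ determined by the unique in-degree-1 structure. The difficulty is that such a $\varphi$ might still permute fibres, following an automorphism of $G$. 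To kill this, I will choose the lengths nonuniformly: tag each $u$ with a distinct chain length $N_u$ (hanging a tail of extra in-degree-1 vertices of distinct lengths before returning to level $0$). I will then verify that the fibres remain equal-size blocks, for example by padding each tail with loops so that the quotient is unchanged up to loops. The honest difficulty is keeping lumpability while making fibres distinguishable. My first attempt is to encode $u$ by the position $i_u$ at which the lifted $\tau$-edge wraps back to level $0$, with distinct $i_u$ for distinct $u$. The chain lengths from level $0$ back to level $0$ then differ across vertices, $\varphi$ must fix each chain, and so $\varphi$ fixes every vertex, giving $\Aut(\widetilde G)=\Aut_E(\widetilde G)$.
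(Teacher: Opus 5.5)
\textbf{There is a genuine gap, in the rigidity step.} Your uniform construction on $V\times[N]$ is sound for lumpability and strong connectivity. It is in fact the paper's construction with all chain lengths equal to $N-1$. The gap is in rigidity, the step you yourself call the main obstacle. As you note, with a common $N$, any automorphism $g$ of $G$ that commutes with $\tau$ and respects the removed edges lifts to $(u,i)\mapsto(g(u),i)$. So the graph you actually wrote down is not rigid in general.

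Your proposed repairs do not close this. Padding tails with loops changes either the out-degree or the quotient, and the statement requires $\widetilde G/\rho_0\simeq G$ exactly, not up to loops. The ``first attempt'' wraps the $\tau$-chain at a position $i_u$ inside the fixed grid $V\times[N]$. That leaves the grid vertices past the wrap point unreachable: the first of them has in-degree $0$. This destroys strong connectivity, and you would have to delete those vertices, producing blocks of different sizes, which is exactly what you were trying to avoid. The underlying misconception is that the classes of $\rho_0$ must have equal size. Lumpability only asks that all vertices of a block send the same number of edges into each block, and says nothing about block sizes.

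\textbf{The missing idea is simply to drop the grid.}
\begin{itemize}
\item For each $u$, take a root $r_u$ and a chain $c_{u,1},\dots,c_{u,L_u}$ with pairwise distinct $L_u\ge 1$. Put $c_{u,i}$ in the block of $\tau^i(u)$, and let $c_{u,L_u}$ wrap to $r_{\tau^{L_u+1}(u)}$.
\item Every vertex of the block of $v$ sends $m^-(v,w)$ core edges to $r_w$.
\item Your own observation that ``the lifted $\tau$-edge always lands in the fibre of $\tau(u)$'' holds verbatim for the wrap edge. So each vertex of block $v$ still has exactly one chain edge into block $\tau(v)$, and lumpability with quotient $G$ is automatic, whatever the block sizes.
\item Since $\tau$ is a permutation and $L_u\ge 1$, every block contains at least two vertices. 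Hence every root has in-degree at least $2$, while every chain vertex has in-degree exactly $1$ (core edges only hit roots). So automorphisms preserve the set of roots.
\item Each root has exactly one out-edge to a non-root, and each non-root has at most one. So every root has a unique maximal path through non-roots, of length $L_u$.
\item Distinctness of the $L_u$ then forces every automorphism to fix each root, and hence each chain vertex.
\end{itemize}
This is precisely the paper's proof. Your proposal contains all of the ingredients except this last reorganization.
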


The proof is slightly longer than a standard rigidification argument, because the construction must preserve simultaneously the $k$-out condition and the blockwise edge multiplicities defining the quotient. For these reasons, we present here a detailed proof. 
\begin{figure}[ht]
\centering
\begin{tikzpicture}[
  >=stealth,
  block/.style={draw, rounded corners, minimum width=2.0cm, minimum height=1.1cm},
  vtx/.style={circle, draw, fill=black, inner sep=1.2pt},
  aux/.style={circle, draw, inner sep=1.2pt},
  every node/.style={font=\small}
]

\node[block,label=above:$B_u$] (B0) at (0,0) {};
\node[block,label=above:$B_{\tau(u)}$] (B1) at (3,0) {};
\node[block,label=above:$B_{\tau^2(u)}$] (B2) at (6,0) {};
\node at (8,0) {$\cdots$};
\node[block,label=above:$B_{\tau^{L_u}(u)}$] (BL) at (10.5,0) {};
\node[block,label=above:$B_{\tau^{L_u+1}(u)}$] (BF) at (14,0) {};

\node[vtx,label=below:$r_u$] (ru) at (-0.45,0) {};
\node[aux,label=below:$c_{u,1}$] (c1) at (2.55,0) {};
\node[aux,label=below:$c_{u,2}$] (c2) at (5.55,0) {};
\node[aux,label=below:$c_{u,L_u}$] (cL) at (10.05,0) {};
\node[vtx,label=below:$\quad r_{\tau^{L_u+1}(u)}$] (rf) at (13.55,0) {};

\draw[->,dashed,thick] (ru) -- (c1);
\draw[->,dashed,thick] (c1) -- (c2);
\draw[->,dashed,thick] (c2) -- (7.2,0);
\draw[->,dashed,thick] (8.8,0) -- (cL);
\draw[->,dashed,thick] (cL) -- (rf);

\end{tikzpicture}
\caption{For each $u\in V$, the walk
$r_u\to c_{u,1}\to \cdots \to c_{u,L_u}\to r_{\tau^{L_u+1}(u)}$
places $c_{u,i}$ in the block $B_{\tau^i(u)}$.}
\end{figure}
\begin{lemma}
\label{lem:construction-refinement-permutation}
Under the assumptions of Proposition~\ref{prop:vertex-rigid-refinement-permutation}, there exists a finite $k$-out graph $\widetilde G=(\widetilde V,\widetilde E,s,t)$ and a partition
\[
\widetilde V=\bigsqcup_{w\in V} B_w
\]
such that the partition $\{B_u:u\in V\}$ is lumpable, equivalently, the associated equivalence relation $\rho_0$ is a congruence, and $\widetilde G/\rho_0 \simeq G$.
\end{lemma}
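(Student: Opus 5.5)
The plan is to build $\widetilde G$ explicitly as a ``chain refinement'' of $G$, following the figure. Each vertex $w$ is blown up into a block $B_w$ consisting of a root $r_w$ and some auxiliary vertices. Every vertex of $B_w$ then receives an outgoing star that is a copy of the star of $w$ in $G$, rerouted only inside target blocks. Lumpability then holds by construction. (Formally $\widetilde G=G$ with the discrete partition already satisfies the bare statement. The point is to produce a construction that is also usable for the rigidity part of Proposition~\ref{prop:vertex-rigid-refinement-permutation}, so I would aim for the chain construction directly.)

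\emph{Vertices.} For each $u\in V$ fix a length $L_u\ge 1$; for later rigidity one would take the $L_u$ pairwise distinct, but any choice works for this lemma. Put
\[
\widetilde V=\{r_w: w\in V\}\sqcup\{c_{u,i}: u\in V,\ 1\le i\le L_u\},
\]
and define $B_w:=\{r_w\}\cup\{c_{u,i}:\tau^i(u)=w\}$. These sets partition $\widetilde V$, since each $c_{u,i}$ lies in exactly the block $B_{\tau^i(u)}$.

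\emph{Edges.} For each $w\in V$ let $e^\ast_w\in\delta_w(\tau(w))$ be the edge removed to form $G^-$; call it the special edge of $w$. For a vertex $x\in B_w$ and each edge $e\in\delta_w(V)$ with $e\ne e^\ast_w$, put an edge $x\to r_{t(e)}$. For the special edge put one further edge from $x$, as follows:
\begin{itemize}
\item from $r_u$ to $c_{u,1}$;
\item from $c_{u,i}$ to $c_{u,i+1}$ when $i<L_u$;
\item from $c_{u,L_u}$ to $r_{\tau^{L_u+1}(u)}$.
\end{itemize}
In each case the target lies in $B_{\tau(w)}$, because $c_{u,i}\in B_{\tau^i(u)}$ and its successor lies in $B_{\tau^{i+1}(u)}=B_{\tau(\tau^i(u))}$. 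Hence every $x\in B_w$ has exactly $k$ outgoing edges, so $\widetilde G$ is $k$-out.

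\emph{Lumpability and the quotient.} For every $x\in B_w$ and every $w'\in V$, the edges of $\widetilde G$ leaving $x$ and landing in $B_{w'}$ correspond bijectively to the edges of $\delta_w(w')$. Thus $|\delta_x(B_{w'})|=|\delta_w(w')|$ independently of $x$, so $\{B_w\}$ is lumpable. By the definition of the quotient, $\widetilde G/\rho_0$ has exactly $|\delta_w(w')|$ parallel edges from $B_w$ to $B_{w'}$, and $B_w\mapsto w$ gives $\widetilde G/\rho_0\simeq G$.

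I would also record strong connectivity, which is needed for the proposition. The roots, together with their non-special edges, contain a copy of $G^-$, which is strongly connected. Each chain $r_u\to c_{u,1}\to\cdots\to c_{u,L_u}\to r_{\tau^{L_u+1}(u)}$ enters at a root and exits to a root. Hence every vertex reaches, and is reached from, the root set.

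The main obstacle is only bookkeeping in degenerate cases. When $\tau(w)=w$ the special edge is a loop and the chain stays inside $B_w$. When $\delta_w(\tau(w))$ contains parallel edges, exactly one of them must be designated special, so that multiplicities are preserved. Both cases are handled by the explicit edge-by-edge bijection above, which I would verify with care.
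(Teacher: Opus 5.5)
Your proposal is correct and follows the paper's own proof. It uses the same roots $r_u$ and the same coding chains with $c_{u,i}\in B_{\tau^i(u)}$ ending at $r_{\tau^{L_u+1}(u)}$, and it gives each $x\in B_v$ exactly $m^-(v,w)$ core edges to $r_w$ plus one coding edge landing in $B_{\tau(v)}$, so $|\delta_x(B_w)|=m(v,w)$; your edge-by-edge bijection with $\delta_v(V)$ is just a repackaging of this count, and your strong-connectivity remark matches the paper's subsequent lemma.
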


\begin{proof}
Set $m(u,w):=|\delta_u(w)|$, and define
\[
m^-(u,w):=
\begin{cases}
m(u,w)-1,& w=\tau(u),\\
m(u,w),& w\neq \tau(u).
\end{cases}
\]
Since $G^-$ is $(k-1)$-out, we have $m^-(u,w)\ge 0$ and $\sum_{w\in V} m^-(u,w)=k-1$ for every $u\in V$. Choose pairwise distinct integers $L_u\ge 1$ for $u\in V$. For each $u\in V$, introduce vertices $r_u,\ c_{u,1},\dots,c_{u,L_u}$, and place them into blocks by declaring $r_u\in B_u$ and $c_{u,i}\in B_{\tau^i(u)}$. Equivalently,
\[
B_w=\{r_w\}\cup\{c_{u,i}:\tau^i(u)=w,\ 1\le i\le L_u\}.
\]
Let $\widetilde V:=\bigsqcup_{w\in V} B_w$. Now define the edges of $\widetilde G$ as follows. For each $u\in V$, add the path
\[
r_u\to c_{u,1}\to c_{u,2}\to \cdots \to c_{u,L_u}\to r_{\tau^{L_u+1}(u)}.
\]
Each such edge will be called a ``coding'' edge (since it is used to ``distinguish" the root $r_u$). Next, for each $v,w\in V$ and each $x\in B_v$, add exactly $m^-(v,w)$ edges from $x$ to $r_w$. These are the ``core'' edges. Each vertex $x\in B_v$ therefore has exactly one coding edge and $\sum_w m^-(v,w)=k-1$ core edges, hence out-degree $k$. So $\widetilde G$ is $k$-out. Fix $v\in V$, $x\in B_v$, and $w\in V$. The number of edges from $x$ to vertices in $B_w$ is
\[
|\delta_x(B_w)|=m^-(v,w)+
\begin{cases}
1,& w=\tau(v),\\
0,& w\neq \tau(v),
\end{cases}
\]
hence $|\delta_x(B_w)|=m(v,w)=|\delta_v(w)|$. Thus all vertices in the same block $B_v$ have the same blockwise outgoing multiplicities. Hence the equivalence relation $\rho_0$ associated with the classes $B_w$ is a congruence and $\widetilde G/\rho_0\simeq G$.
\end{proof}
\begin{lemma}
\label{lem:strong-connectivity-refinement-permutation}
The graph $\widetilde G$ constructed in Lemma~\ref{lem:construction-refinement-permutation} is strongly connected.
\end{lemma}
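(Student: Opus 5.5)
The plan is to show that every vertex of $\widetilde G$ can reach every root $r_w$, and that every root can reach every vertex. Strong connectivity then follows by composing these two facts.

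\emph{Reaching roots.} Every vertex of $\widetilde G$ lies on exactly one coding path. For $x=c_{u,i}$, following the coding edges leads to $r_{\tau^{L_u+1}(u)}$; for $x=r_u$ we are already at a root. So every vertex reaches some root, and it remains to show that every root reaches every other root. Here I use the strong connectivity of $G^-$. Take an edge $v\to w$ of $G^-$, that is, $m^-(v,w)\ge 1$. Every vertex of $B_v$, in particular $r_v$, has $m^-(v,w)\ge1$ core edges to $r_w$. Hence $r_v\to r_w$ is an edge of $\widetilde G$, and the map $w\mapsto r_w$ embeds $G^-$, as far as reachability is concerned, into the subgraph of $\widetilde G$ spanned by the roots. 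Since $G^-$ is strongly connected, any root $r_v$ reaches any root $r_w$ using core edges only.

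\emph{Reaching auxiliary vertices.} Each auxiliary vertex $c_{u,i}$ lies on the coding path starting at $r_u$, so it is reachable from $r_u$. Combining: from any vertex $x$, first reach some root, then reach $r_u$ through roots, then follow the coding path from $r_u$ to $c_{u,i}$ (or stop at $r_u$ if the target is a root). Hence $\widetilde G$ is strongly connected.

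The only step needing care is that core edges exist out of $r_v$ itself, not merely out of some vertex of $B_v$. This holds because the construction gives every $x\in B_v$ exactly $m^-(v,w)$ core edges to $r_w$, and $r_v\in B_v$. It is precisely the point where the hypothesis that $G^-$ is strongly connected is used, rather than the weaker fact that $G$ is strongly connected. The coding edges realize the $\tau$-edges of $G$, but they are not needed for connectivity between roots. I expect no further obstacle.
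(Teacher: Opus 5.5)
Your proposal is correct and is essentially the paper's argument: the core edges among the roots reproduce $G^-$, so the roots are mutually reachable, and each $c_{u,i}$ is reached from $r_u$ and reaches $r_{\tau^{L_u+1}(u)}$ along its coding chain. The only blemish is wording: ``every vertex lies on exactly one coding path'' holds for the vertices $c_{u,i}$ but not for roots, which start their own coding path and may end others; this does not affect the proof.
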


\begin{proof}
Let $R:=\{r_u:u\in V\}$. For $u,w\in V$, there are exactly $m^-(u,w)$ core edges from $r_u$ to $r_w$, so the subgraph induced on $R$ by the core edges is isomorphic to $G^-$. Hence $R$ is strongly connected. Now let $y\in \widetilde V$. If $y\in R$, there is nothing to prove. If $y=c_{u,i}$, then $r_u$ reaches $y$ by following coding edges, and $y$ reaches the root $r_{\tau^{L_u+1}(u)}$ by continuing along the same coding chain. Since $R$ is strongly connected, every root reaches every other root. Therefore every vertex is reachable from $R$ and reaches $R$, so $\widetilde G$ is strongly connected.
\end{proof}

\begin{lemma}
\label{lem:vertex-rigidity-refinement-permutation}
Every automorphism of $\widetilde G$ fixes every vertex.
\end{lemma}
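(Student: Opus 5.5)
The plan is to show that every automorphism $\varphi$ of $\widetilde G$ preserves the root set $R=\{r_u:u\in V\}$. Once that is known, the coding chains and the pairwise distinct lengths $L_u$ pin down each root individually, and then every $c_{u,i}$.

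\textbf{Step 1 (in-degrees).} First compute in-degrees in $\widetilde G$. Every core edge ends in $R$, so each non-root vertex $c_{u,i}$ receives only the single coding edge coming from its predecessor on the chain. Hence $|\delta_{\widetilde V}(c_{u,i})|=1$. For a root $r_w$, the in-degree is
\[
\sum_{v}m^-(v,w)\,|B_v| \;+\; \#\{u:\tau^{L_u+1}(u)=w\}.
\]
Since $G^-$ is strongly connected, some $v$ has $m^-(v,w)\ge 1$, so $r_w$ has at least one core in-edge. The aim is to show that roots are distinguishable from chain vertices by an $\Aut$-invariant property.

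\textbf{Step 2 ($\varphi(R)=R$), the main obstacle.} A root may have in-degree $1$, namely when its only in-edge is a single core edge from some $r_v$ with $B_v=\{r_v\}$. So in-degree alone may not separate roots from chain vertices. I would first try the following invariant. Every vertex $x\in\widetilde V$ has $k-1\ge 1$ core edges, all ending in $R$, and exactly one coding edge. Consider a vertex $y$ of in-degree $1$ that is not a root. Then $y=c_{u,i}$, and its unique in-neighbour $x$ reaches $y$ by a coding edge; the remaining $k-1$ out-edges of $x$ go into $R$.

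To exploit this, I would iterate. Let $S_0$ be the set of vertices of in-degree $\ge 2$; these are all roots, and $S_0$ is $\varphi$-invariant. Then show by induction on the distance along chains that the vertices of in-degree $1$ are forced to be classified consistently. Concretely, following the unique in-edge backwards from a chain vertex $c_{u,i}$ reaches $r_u$ after exactly $i$ steps, always through in-degree-$1$ vertices. Comparing these backward in-degree-$1$ walks and their lengths should separate chain vertices from roots. If this becomes delicate, the fallback is counting: compare, for each $\varphi$-orbit, the number of in-edges whose source lies in $R$, using that $\varphi$ preserves all multiplicities $|\delta_A(x)|$.

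\textbf{Step 3 (chains and distinct lengths).} With $\varphi(R)=R$ in hand, $\varphi$ maps each maximal path
\[
r_u\to c_{u,1}\to\cdots\to c_{u,L_u}\to r_{\tau^{L_u+1}(u)},
\]
whose internal vertices lie outside $R$, onto another such path. Such a path is determined by its starting root $r_u$: the coding edge is the unique out-edge of $r_u$ leaving $R$ when $L_u\ge 1$. Hence $\varphi$ sends the chain of $r_u$ onto the chain of $\varphi(r_u)$, and the two have the same number $L_u$ of internal vertices. Since the $L_u$ are pairwise distinct, $\varphi(r_u)=r_u$ for all $u$. Preservation of the chain then gives $\varphi(c_{u,i})=c_{u,i}$ for all $i$. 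Thus $\varphi_v=\id$, which is the claim.
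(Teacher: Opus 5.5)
Your overall architecture matches the paper's proof: use in-degrees to isolate the root set $R$, then use the coding chains with pairwise distinct lengths $L_u$ to pin down each root and each $c_{u,i}$. Step 3 is fine as written. The gap is Step 2: you never actually prove $\varphi(R)=R$, and Step 3 depends entirely on it. The backward-walk idea is only asserted to ``should separate'' roots from chain vertices, and the counting ``fallback'' is not carried out. So the argument is incomplete at exactly the point you yourself flag as the main obstacle.

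The missing observation is that the situation you worry about cannot occur, because no block is a singleton. Since $\tau$ is a permutation and every $L_u\ge 1$, the vertex $c_{\tau^{-1}(v),1}$ exists and lies in $B_{\tau(\tau^{-1}(v))}=B_v$. Hence $|B_v|\ge 2$ for every $v\in V$, and your claim that a root can have in-degree $1$ because some $B_v=\{r_v\}$ is false.

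Plug this into your own in-degree formula from Step 1. Choose $v$ with $m^-(v,w)\ge 1$, which exists because $G^-$ is strongly connected. Then every root $r_w$ has in-degree at least $m^-(v,w)\,|B_v|\ge 2$, while every $c_{u,i}$ has in-degree exactly $1$. Thus $R=\{x\in\widetilde V: |\delta_{\widetilde V}(x)|\ge 2\}$, which every automorphism preserves because automorphisms preserve in-degrees. With that, your Step 3 completes the proof exactly as in the paper.
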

\begin{proof}
We first characterize the roots intrinsically. Since $\tau$ is a permutation, for every $w\in V$ the block $B_w$ contains at least two vertices: indeed, $r_w\in B_w$ and also $c_{\tau^{-1}(w),1}\in B_w$. Thus $|B_w|\ge 2$. Now fix $w\in V$. Because $G^-$ is strongly connected, $w$ has positive in-degree in $G^-$. Hence there exists $v\in V$ such that $m^-(v,w)\ge 1$. Every vertex of $B_v$ contributes $m^-(v,w)$ core edges to $r_w$, and $|B_v|\ge 2$, so $r_w$ has in-degree at least $2$ in $\widetilde G$. On the other hand, every non-root vertex has in-degree exactly $1$: if $x=c_{u,1}$, the only edge ending at $x$ is $r_u\to c_{u,1}$; if $x=c_{u,i}$ with $i>1$, the only edge ending at $x$ is $c_{u,i-1}\to c_{u,i}$. No core edge ends at a non-root vertex. Since automorphisms are graph isomorphisms, they preserve in-degrees. Hence, the set of roots
\[
R=\{x\in \widetilde V: |\delta_{\widetilde V}(x)|\ge 2\}
\]
is preserved setwise by every automorphism. Set $X:=\widetilde V\setminus R$. Since every automorphism preserves in-degree, it preserves $R$ setwise, hence also $X$. For every root $r\in R$, there is exactly one outgoing edge from $r$ to a vertex of $X$. Indeed, if $r=r_u$, this is precisely the edge $r_u\to c_{u,1}$; all other outgoing edges from $r_u$ end in $R$. Moreover, every vertex of $X$ has at most one outgoing edge to a vertex of $X$. More precisely, if $x=c_{u,i}$ with $1\le i<L_u$, then the unique outgoing edge from $x$ to $X$ is $c_{u,i}\to c_{u,i+1}$, whereas if $x=c_{u,L_u}$, then $x$ has no outgoing edge to $X$. It follows that for every $r\in R$ there exists a unique maximal directed path $r=x_0\to x_1\to \cdots \to x_{\ell(r)}$ such that $x_1,\dots,x_{\ell(r)}\in X$.
For $r_u\in R$, this path is exactly $r_u\to c_{u,1}\to \cdots \to c_{u,L_u}$, hence $\ell(r_u)=L_u$. Now let $\varphi\in\Aut(\widetilde G)$. Since $\varphi$ preserves $R$ and $X$ setwise and preserves adjacency, it sends the unique maximal directed path associated with $r$ onto the unique maximal directed path associated with $\varphi(r)$. In particular, $\ell(\varphi(r))=\ell(r)$ for all $r\in R$.
Since we have chosen the integers $L_u$ pairwise distinct, the roots are distinguished by the values $\ell(r_u)=L_u$. Therefore every automorphism fixes each root $r_u$. Once $r_u$ is fixed, the whole chain
\[
r_u\to c_{u,1}\to \cdots \to c_{u,L_u}\to r_{\tau^{L_u+1}(u)}
\]
is fixed vertexwise: $c_{u,1}$ is the unique out-neighbor of $r_u$ outside $R$, and inductively $c_{u,i+1}$ is the unique out-neighbor of $c_{u,i}$ outside $R$ for $1\le i<L_u$. Since every vertex is either a root or belongs to one of these chains, every automorphism fixes every vertex.
\end{proof}

\begin{proof}[Proof of Proposition~\ref{prop:vertex-rigid-refinement-permutation}]
Lemma~\ref{lem:construction-refinement-permutation} gives the graph $\widetilde G$ and the congruence $\rho_0$, with quotient $\widetilde G/\rho_0\simeq G$. By Lemma~\ref{lem:strong-connectivity-refinement-permutation}, the graph $\widetilde G$ is strongly connected. By Lemma~\ref{lem:vertex-rigidity-refinement-permutation}, every automorphism of $\widetilde G$ fixes every vertex. Hence $\Aut(\widetilde G)=\Aut_E(\widetilde G)$.
\end{proof}

\begin{proposition}\label{prop:parallel-free-rigidification}
Under the assumptions of Proposition~\ref{prop:vertex-rigid-refinement-permutation}, assume moreover that:
\begin{enumerate}
\item for every $v,w\in V$ with $w\neq \tau(v)$, one has $m(v,w)\le 1$;
\item for every $v\in V$, one has $m(v,\tau(v))\le 2$;
\item every $\tau$-cycle contains a vertex $g$ such that $m(g,\tau(g))=1$.
\end{enumerate}
Then the integers $L_u$ can be chosen so that the resulting refinement $\widetilde G$ has no parallel edges. In particular,
\[
\Aut(\widetilde G)=\{\id\}.
\]
\end{proposition}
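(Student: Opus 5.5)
The plan is to find every place where the construction of Lemma~\ref{lem:construction-refinement-permutation} could create parallel edges, and then choose the lengths $L_u$ so that none of them actually occurs. Once $\widetilde G$ has no parallel edges, an edge is determined by its source and target. By Lemma~\ref{lem:vertex-rigidity-refinement-permutation}, every automorphism fixes every vertex, so it also fixes every edge. Hence $\Aut(\widetilde G)=\Aut_E(\widetilde G)=\{\id\}$.

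First I classify the possible pairs of parallel edges in $\widetilde G$. A vertex $x\in B_v$ has two kinds of outgoing edges.
\begin{enumerate}
\item Core edges: exactly $m^-(v,w)$ of them go to $r_w$.
\item Exactly one coding edge.
\end{enumerate}
Parallel edges can therefore arise in two ways.

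\emph{Core--core.} This happens when $m^-(v,w)\ge 2$ for some $v,w$. If $w\neq\tau(v)$, hypothesis (1) gives $m^-(v,w)=m(v,w)\le 1$. If $w=\tau(v)$, hypothesis (2) gives $m^-(v,\tau(v))=m(v,\tau(v))-1\le 1$. So this case never occurs, whatever the $L_u$ are.

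\emph{Coding--core.} Core edges always end at roots. Since $L_u\ge 1$, the only coding edges ending at a root are the terminal ones, $c_{u,L_u}\to r_{\tau^{L_u+1}(u)}$. Here $c_{u,L_u}\in B_v$ with $v=\tau^{L_u}(u)$, and the target is $r_{\tau(v)}$. The core edges from $c_{u,L_u}$ to $r_{\tau(v)}$ number $m(v,\tau(v))-1$. So a parallel pair appears exactly when $m(\tau^{L_u}(u),\tau^{L_u+1}(u))=2$. Loops may appear, but they are not parallel edges and cause no problem.

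Next I choose the $L_u$. For $u\in V$, let $d_u$ be the length of its $\tau$-cycle. By hypothesis (3), that cycle contains a vertex $g_u$ with $m(g_u,\tau(g_u))=1$; write $g_u=\tau^{a_u}(u)$ with $1\le a_u\le d_u$. Every $L_u$ in the infinite set $\{a_u+td_u: t\ge 0\}$ satisfies $\tau^{L_u}(u)=g_u$, so the terminal coding edge of the chain of $u$ has no parallel core edge.

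These sets are infinite, so I can pick values greedily, one vertex $u$ at a time, avoiding the finitely many integers already used. This makes the $L_u$ pairwise distinct and $\ge 1$, as Lemma~\ref{lem:construction-refinement-permutation} requires.

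With this choice, Lemmas~\ref{lem:construction-refinement-permutation}, \ref{lem:strong-connectivity-refinement-permutation} and \ref{lem:vertex-rigidity-refinement-permutation} still apply, since they only used that the $L_u$ are distinct and positive. So $\widetilde G$ is strongly connected with $\widetilde G/\rho_0\simeq G$, and it is parallel-free by the two cases above.

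The main point requiring care is the case analysis of parallel edges. One has to notice that the first and intermediate coding edges can never be parallel to anything, because they end at non-root vertices. One also has to check that the residue-class choice of $L_u$ is compatible with the distinctness requirement; the greedy choice handles this.
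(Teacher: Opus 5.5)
Your proof is correct and follows essentially the same route as the paper. Both choose the $L_u$ so that every terminal vertex $c_{u,L_u}$ lies in a block $B_g$ with $m(g,\tau(g))=1$. Both use (1)--(2) to rule out parallel core edges, and observe that only terminal coding edges can coincide with a core edge; the only cosmetic differences are that you pick $g_u$ per vertex rather than per $\tau$-cycle and spell out distinctness of the $L_u$ via a greedy choice.
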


\begin{proof}
For each $\tau$-cycle $O$, choose a vertex $g_O\in O$ such that $m(g_O,\tau(g_O))=1$, as in (3). For every $u\in O$, choose a positive integer $L_u$ such that $\tau^{L_u}(u)=g_O$. Since the vertices in different $\tau$-cycles are disjoint, and within each cycle one may add sufficiently large multiples of $|O|$, the integers $L_u$ can be chosen pairwise distinct.
With this choice, every terminal vertex $c_{u,L_u}$ lies in a block $B_{g_O}$ such that $m(g_O,\tau(g_O))=1$.
We claim that the resulting graph $\widetilde G$ has no parallel edges. Let $x\in B_v$.
If $w\neq \tau(v)$, then the number of core edges from $x$ to $r_w$ is $m(v,w)$, which is at most $1$ by (1). Hence no parallel edges arise in this case.
Now consider $w=\tau(v)$. The number of core edges from $x$ to $r_{\tau(v)}$ is $m(v,\tau(v))-1$, which is at most $1$ by (2). Thus parallel edges can only occur if the distinguished edge from $x$ also ends at $r_{\tau(v)}$. If $x$ is not a terminal vertex of one of the distinguished walks, then its distinguished edge ends at a non-root vertex, so it cannot be parallel to a core edge. If $x=c_{u,L_u}$ is terminal, then by construction $x\in B_{g_O}$ for some cycle $O$, and $m(g_O,\tau(g_O))=1$. Hence
\[
m^-(g_O,\tau(g_O))=m(g_O,\tau(g_O))-1=0,
\]
so there is no core edge from $x$ to $r_{\tau(g_O)}$. Therefore no parallel edge occurs here either.
Thus $\widetilde G$ has no parallel edges. By Lemma~\ref{lem:vertex-rigidity-refinement-permutation}, every automorphism of $\widetilde G$ fixes every vertex. Since there is at most one edge between any ordered pair of vertices, every automorphism fixes every edge as well. Hence $\Aut(\widetilde G)=\{\id\}$.
\end{proof}

The following lemma will be useful later in the proof of the main theorem. 
\begin{lemma}\label{lem:triangular-walks-lift}
Under the assumptions of
Proposition~\ref{prop:vertex-rigid-refinement-permutation},
assume moreover that there exist a vertex $v_0\in V$ and closed walks
$D_1,\dots,D_r$ in $G$, all based at $v_0$, such that, for each $i$,
the edge $f_i$ occurs exactly once in $D_i$ and does not occur in any
of the walks $D_1,\dots,D_{i-1}$. Then the integers $L_u$ in the construction of
Proposition~\ref{prop:vertex-rigid-refinement-permutation} may be chosen
so that the resulting refinement $\widetilde G$ contains closed walks
$\widetilde D_1,\dots,\widetilde D_r$, all based at $r_{v_0}$, such
that, for each $i$, the edge $\widetilde f_i$ occurs exactly once in
$\widetilde D_i$ and does not occur in any of the walks
$\widetilde D_1,\dots,\widetilde D_{i-1}$.
\end{lemma}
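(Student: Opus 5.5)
Our plan is to lift each closed walk $D_i$ of $G$ to a closed walk of $\widetilde G$ based at $r_{v_0}$ by choosing, for every edge of $G$, a canonical lift into $\widetilde G$, and to choose the $L_u$ so that the lifted walks close up. Recall from Lemma~\ref{lem:construction-refinement-permutation} that for each edge $f\in\delta_v(w)$ of $G$ and each $x\in B_v$ there is a corresponding edge of $\widetilde G$ from $x$ into $B_w$. Core edges (those counted by $m^-(v,w)$) always end at the root $r_w$. The single coding edge goes to $B_{\tau(v)}$ and ends at a root only when $x=c_{u,L_u}$ is terminal. So we fix, for each $v$, the edge $g_v\in\delta_v(\tau(v))$ removed to form $G^-$. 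We lift every other edge $f:v\to w$ of $G$, starting at $r_v$, to the corresponding core edge $r_v\to r_w$. We lift $g_v$ to the coding path $r_v\to c_{v,1}\to\cdots\to c_{v,L_v}\to r_{\tau^{L_v+1}(v)}$.

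The lifted walk must remain a walk of the correct shape, so that its projection under $\rho_0$ traverses the right blocks. For this we choose each $L_v$ to satisfy $\tau^{L_v+1}(v)=\tau(v)$, i.e.\ $L_v\equiv 0 \pmod{|O_v|}$, where $O_v$ is the $\tau$-cycle of $v$. These values can be taken pairwise distinct by using distinct positive multiples of the cycle lengths; this is the only constraint Proposition~\ref{prop:vertex-rigid-refinement-permutation} needs. With this choice, every edge $f:v\to w$ of $G$ has a lift $\widehat f$ that is a walk in $\widetilde G$ from $r_v$ to $r_w$. We define $\widetilde D_i$ as the concatenation of the lifts of the edges of $D_i$. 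Each $\widetilde D_i$ is then a closed walk based at $r_{v_0}$.

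Next we define $\widetilde f_i$ as the first edge of $\widehat f_i$: the core edge $r_v\to r_w$ if $f_i\neq g_v$, and $r_v\to c_{v,1}$ if $f_i=g_v$. The key observation is that this first edge determines the original edge of $G$. A core edge $r_v\to r_w$ appears only in lifts of the corresponding non-removed edge of $G$. We choose a bijection between the $m^-(v,w)$ parallel core edges from $r_v$ and the non-removed edges of $\delta_v(w)$, so that distinct parallel edges of $G$ receive distinct lifts. The edge $r_v\to c_{v,1}$ appears only in $\widehat g_v$, because $c_{v,1}$ has in-degree one and the coding chains are disjoint. Hence the number of occurrences of $\widetilde f_i$ in $\widetilde D_j$ equals the number of occurrences of $f_i$ in $D_j$. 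It follows that $\widetilde f_i$ occurs exactly once in $\widetilde D_i$ and not at all in $\widetilde D_1,\dots,\widetilde D_{i-1}$.

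We expect the main obstacle to be the bookkeeping of this correspondence, namely verifying that no edge of $\widetilde G$ used by $\widehat f$ is also used by $\widehat f'$ for $f'\ne f$. The core-edge lifts are injective by the chosen bijection. The coding lifts occupy disjoint chains, and their final edge $c_{v,L_v}\to r_{\tau(v)}$ is the coding edge of a non-root vertex, hence is not a core edge from a root. Finally, we check that the congruence condition $L_v\equiv 0\pmod{|O_v|}$ does not conflict with the distinctness required in Lemma~\ref{lem:vertex-rigidity-refinement-permutation}.
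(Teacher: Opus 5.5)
Your proposal is correct and follows the paper's proof essentially step by step. You use the same choice of pairwise distinct $L_u$ divisible by the length of the $\tau$-cycle of $u$, so that each coding walk runs from $r_u$ to $r_{\tau(u)}$; the same edge-by-walk substitution, with one edge of $\delta_u(\tau(u))$ sent to the coding walk and the remaining edges sent bijectively to core edges; and the same choice of $\widetilde f_i$ as the first edge of $\widehat f_i$. Your identity that the number of occurrences of $\widetilde f_i$ in $\widetilde D_j$ equals the number of occurrences of $f_i$ in $D_j$ is just a slightly more explicit form of the paper's injectivity argument.
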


\begin{proof}
We choose the refinement in
Proposition~\ref{prop:vertex-rigid-refinement-permutation}
as follows. For each $u\in V$, let $o(u)$ be the length of the
$\tau$-cycle containing $u$, and choose pairwise distinct positive
integers $L_u$ such that
\[
o(u)\mid L_u
\qquad (u\in V).
\]
Such a choice is possible by taking sufficiently large distinct
multiples of the corresponding cycle lengths. Then
\[
\tau^{L_u}(u)=u,
\qquad
\tau^{L_u+1}(u)=\tau(u).
\]
Hence the coding walk associated with $u$ is a walk from $r_u$ to
$r_{\tau(u)}$:
\[
r_u\longrightarrow c_{u,1}\longrightarrow\cdots
\longrightarrow c_{u,L_u}\longrightarrow r_{\tau(u)}.
\]
Let $\widetilde G$ be the refinement obtained from this choice. For each $u,w\in V$, assign to every edge $e\in\delta_u(w)$ a walk
$\widehat e$ in $\widetilde G$ from $r_u$ to $r_w$ as follows. If $w\neq\tau(u)$, there are exactly
\[
m(u,w)=|\delta_u(w)|
\]
core edges from $r_u$ to $r_w$; choose a bijection between
$\delta_u(w)$ and these core edges.

If $w=\tau(u)$, there are
\[
m(u,\tau(u))-1
\]
core edges from $r_u$ to $r_{\tau(u)}$, together with the coding walk
\[
r_u\longrightarrow c_{u,1}\longrightarrow\cdots
\longrightarrow c_{u,L_u}\longrightarrow r_{\tau(u)}.
\]
Since
\[
m(u,\tau(u))
=
\bigl(m(u,\tau(u))-1\bigr)+1,
\]
choose a bijection between $\delta_u(\tau(u))$ and the set consisting
of these core edges and this coding walk.
Thus every edge $e\in E(G)$ is assigned a unique walk $\widehat e$ in
$\widetilde G$ from $r_{s(e)}$ to $r_{t(e)}$. For each $i$, let
$\widetilde D_i$ be obtained from $D_i$ by replacing every edge $e$
with the corresponding walk $\widehat e$. Since $D_i$ is closed and
based at $v_0$, the walk $\widetilde D_i$ is closed and based at
$r_{v_0}$.
Fix $i$. By hypothesis, the edge $f_i$ occurs exactly once in $D_i$
and does not occur in $D_1,\dots,D_{i-1}$. If $\widehat f_i$ is a core edge, set
\[
\widetilde f_i:=\widehat f_i.
\]
Since $f_i$ occurs exactly once in $D_i$, the edge
$\widetilde f_i$ occurs exactly once in $\widetilde D_i$.
By injectivity of the assignment $e\mapsto\widehat e$, it does not
occur in any of $\widetilde D_1,\dots,\widetilde D_{i-1}$.
If $\widehat f_i$ is the coding walk associated with some $u\in V$, set
\[
\widetilde f_i:=
\bigl(r_u\longrightarrow c_{u,1}\bigr).
\]
This edge belongs to no assigned walk other than $\widehat f_i$.
Since $f_i$ occurs exactly once in $D_i$, the edge
$\widetilde f_i$ occurs exactly once in $\widetilde D_i$.
Since $f_i$ does not occur in any of $D_1,\dots,D_{i-1}$,
the edge $\widetilde f_i$ does not occur in any of
$\widetilde D_1,\dots,\widetilde D_{i-1}$. Hence, for each $i$, the edge $\widetilde f_i$ occurs exactly once in $\widetilde D_i$ and does not occur in any of $\widetilde D_1,\dots,\widetilde D_{i-1}$.
\end{proof}
Note that a naive lifting argument is not sufficient in the previous proof: the lift of a closed walk in $G$ from a prescribed initial vertex of $\widetilde G$ need not be closed. Although one could then close it up using strong connectivity of $\widetilde G$, this would introduce additional edges and destroy the control needed for the ``triangularity condition" of the previous lemma. 
Regarding Proposition~\ref{prop:vertex-rigid-refinement-permutation}, in general, one cannot expect $\Aut(\widetilde G)=\{\id\}$, because parallel edges with the same source and target can always be permuted. It would nevertheless be interesting to determine whether one can avoid this phenomenon at the cost of a more complicated construction. A basic situation in which this obstruction disappears is when the initial $k$-out graph has no parallel edges, i.e.\ is parallel-free. In that case, the refinement constructed above is fully rigid.

We now turn to the standard voltage construction. Let $\widetilde G$ be a finite graph, let $H$ be a finite group, and let $\alpha:E(\widetilde G)\to H$ be a voltage assignment. The associated derived graph $\widetilde G^\alpha$ has vertex set $V(\widetilde G)\times H$ and edge set $E(\widetilde G)\times H$, with
\[
s(e,h)=(s(e),h),\qquad t(e,h)=(t(e),h\alpha(e)).
\]
There is a natural projection
\[
\pi:\widetilde G^\alpha\to \widetilde G,
\]
given on vertices and edges by $\pi_v(v,h)=v$ and $\pi_e(e,h)=e$.
The following lemma is specific to the refinement constructed above. In general, automorphisms of a regular or voltage cover must not preserve the fibers of the covering projection a priori; here, this follows from the intrinsic $R/X$-structure of the refinement.

\begin{lemma}\label{lem:fibers-preserved-derived}
Let $\widetilde G$ be the graph constructed in Proposition~\ref{prop:vertex-rigid-refinement-permutation}, let $H$ be a finite group, and let $\alpha:E(\widetilde G)\to H$ be any voltage assignment. Let $G^\alpha:=\widetilde G^\alpha$ be the derived graph, and let $\pi:G^\alpha\to \widetilde G$ be the natural projection. Then every automorphism of $G^\alpha$ preserves each fiber
\[
\pi^{-1}(x)=\{x\}\times H
\qquad (x\in V(\widetilde G))
\]
setwise.
\end{lemma}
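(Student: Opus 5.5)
The plan is to transport the intrinsic $R/X$ characterization from the proof of Lemma~\ref{lem:vertex-rigidity-refinement-permutation} to the derived graph. The key observation is that the projection $\pi$ is a covering in the strong sense. Each vertex $(x,h)$ has out-star $\{(e,h): s(e)=x\}$, mapped bijectively onto the out-star of $x$. Each in-edge of $(y,h')$ has the form $(e,h'\alpha(e)^{-1})$ with $t(e)=y$, and distinct $e$ give distinct edges. Hence in-degrees and out-degrees in $G^\alpha$ equal those of the projected vertex in $\widetilde G$. Moreover, the in-neighbours and out-neighbours of $(x,h)$ project, with multiplicity, onto those of $x$.

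The steps are then as follows. First, set $\widehat R:=R\times H$ and $\widehat X:=X\times H$. By the in-degree count above, $\widehat R$ is exactly the set of vertices of in-degree at least $2$, and $\widehat X$ is the set of vertices of in-degree $1$. Every automorphism $\psi$ of $G^\alpha$ preserves in-degrees, so it preserves $\widehat R$ and $\widehat X$ setwise. Second, repeat the path argument upstairs. A vertex $(r_u,h)\in\widehat R$ has exactly one out-edge into $\widehat X$, namely the lift of $r_u\to c_{u,1}$. A vertex $(c_{u,i},h)$ has exactly one out-edge into $\widehat X$ if $i<L_u$, and none if $i=L_u$, because the lift of an edge with target in $X$ has target in $\widehat X$ and conversely. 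So each $(r_u,h)$ starts a unique maximal path through $\widehat X$. That path is the lift of $r_u\to c_{u,1}\to\cdots\to c_{u,L_u}$, so its length is $L_u$.

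Third, since $\psi$ preserves $\widehat R$, $\widehat X$ and adjacency, it maps maximal paths to maximal paths and preserves their lengths. The $L_u$ are pairwise distinct, so $\psi(r_u,h)\in\{r_u\}\times H$; thus each root fiber is preserved. Fourth, $\psi$ sends the unique $\widehat X$-out-neighbour of $(r_u,h)$ to that of $\psi(r_u,h)=(r_u,h')$, which is $(c_{u,1},h'\alpha(r_u\to c_{u,1}))$. This lies in the fiber of $c_{u,1}$. Inducting along the chain shows $\psi$ maps $\{c_{u,i}\}\times H$ into itself for each $i$, and bijectivity gives setwise preservation. Every vertex of $\widetilde G$ is a root or lies on a chain, so all fibers are preserved.

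The main obstacle is the first step: checking that in-degree in $G^\alpha$ faithfully recovers the root/non-root dichotomy. The concern is that voltages might merge or split in-edges, but this is ruled out because for fixed $(y,h')$ and fixed $e$ with $t(e)=y$ there is exactly one lift ending at $(y,h')$. Everything else is a verbatim lift of the argument for Lemma~\ref{lem:vertex-rigidity-refinement-permutation}.
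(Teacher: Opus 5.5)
Your proposal is correct and follows essentially the same argument as the paper. The in-degree count identifies $R\times H$ as the set of vertices of in-degree at least $2$; the unique maximal path through $X\times H$ starting at $(r_u,h)$ has length $L_u$, and distinctness of the $L_u$ fixes each root fiber; the chain fibers $\{c_{u,i}\}\times H$ are then recovered step by step along these paths.
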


\begin{proof}
Write
\[
R:=\{r_u: u \text{ is a root vertex of }\widetilde G\},
\]
and let
\[
X:=V(\widetilde G)\setminus R.
\]
Set similarly
\[
R^\alpha:=R\times H\subseteq V(G^\alpha),
\qquad
X^\alpha:=V(G^\alpha)\setminus R^\alpha.
\]
We first note that in-degrees are preserved by the derived construction. Indeed, fix $(x,g)\in V(G^\alpha)$. For every edge $e:y\to x$ of $\widetilde G$, there is a unique edge $(e,h):(y,h)\to (x,g)$ of $G^\alpha$, corresponding to $h=g\alpha(e)^{-1}$. Hence $|\delta_{V(G^\alpha)}((x,g))|=|\delta_{V(\widetilde G)}(x)|.$ By the proof of Lemma~\ref{lem:vertex-rigidity-refinement-permutation}, in $\widetilde G$ one has
\[
R=\{x\in V(\widetilde G): |\delta_{V(\widetilde G)}(x)|\ge 2\}.
\]
Therefore
\[
R^\alpha=\{(x,h)\in V(G^\alpha): x\in R\},
\]
so every automorphism of $G^\alpha$ preserves $R^\alpha$ setwise, and hence also $X^\alpha$. Now fix $(r_u,h)\in R^\alpha$. The distinguished walk
$r_u\to c_{u,1}\to \cdots \to c_{u,L_u}$ in $\widetilde G$ lifts uniquely to a walk $(r_u,h)=z_0\to z_1\to \cdots \to z_{L_u}$ in $G^\alpha$, with $z_1,\dots,z_{L_u}\in X^\alpha$. Since the vertices $r_u,c_{u,1},\dots,c_{u,L_u}$
are pairwise distinct in $\widetilde G$, the lifted vertices $z_0,z_1,\dots,z_{L_u}$ are pairwise distinct as well. Thus this lifted walk is in fact a directed path.
Moreover, $(r_u,h)$ has exactly one outgoing edge to $X^\alpha$, each $z_i$ with $1\le i<L_u$ has exactly one outgoing edge to $X^\alpha$, and $z_{L_u}$ has none. Hence this is the unique maximal directed path starting at $(r_u,h)$ whose vertices after the first lie in $X^\alpha$. In particular, its length is exactly $L_u$. Since the integers $L_u$ are pairwise distinct, the set
\[
F_u:=\{(r_u,h): h\in H\}
\]
is characterized intrinsically as the set of vertices of $R^\alpha$ from which the unique maximal path through $X^\alpha$ has length $L_u$. Hence every automorphism of $G^\alpha$ preserves each $F_u$ setwise. Finally, for each $1\le i\le L_u$, the set
\[
F_{u,i}:=\{(c_{u,i},h): h\in H\}
\]
is exactly the set of vertices reached after $i$ steps along the unique maximal path through $X^\alpha$ starting from vertices of $F_u$. Hence every automorphism also preserves each $F_{u,i}$ setwise. Since every vertex of $\widetilde G$ is either some $r_u$ or some $c_{u,i}$, every fiber $\{x\}\times H$ is preserved setwise.
\end{proof}
We shall use the following standard sufficient criterion for strong connectivity in the setting of voltage and derived graphs. We recall that for a walk $W=e_1e_2\cdots e_m$ in a voltage graph, we write
\[
\alpha(W):=\alpha(e_1)\alpha(e_2)\cdots\alpha(e_m)\in H
\]
for its voltage.
\begin{proposition}\label{prop:strong-connectivity-derived}
Let $\Delta$ be a finite strongly connected directed graph, let $H$ be a finite group, and let $\alpha:E(\Delta)\to H$ be a voltage assignment. Fix a vertex $v_0\in V(\Delta)$, and let
\[
K_{v_0}:=\langle \alpha(W): W \text{ is a closed walk in }\Delta\text{ based at }v_0\rangle\leq H.
\]
If $K_{v_0}=H$, then the derived graph $\Delta^\alpha$ is strongly connected.
\end{proposition}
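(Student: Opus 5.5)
The plan is to show that any two vertices $(x,g)$ and $(y,h)$ of $\Delta^\alpha$ are joined by a directed walk from the first to the second. The key observation is that a walk $W$ in $\Delta$ from $x$ to $y$ lifts uniquely, from any starting vertex $(x,g)$, to a walk in $\Delta^\alpha$ ending at $(y,g\alpha(W))$. This follows by induction on the length of $W$ from the definitions $s(e,h)=(s(e),h)$ and $t(e,h)=(t(e),h\alpha(e))$. So reachability in $\Delta^\alpha$ reduces to controlling which voltages are realized by walks in $\Delta$.

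\textbf{Step 1: closed walks at $v_0$ realize all of $H$.} Let $S_{v_0}$ be the set of voltages $\alpha(W)$ of closed walks based at $v_0$. Since concatenating closed walks at $v_0$ multiplies their voltages, $S_{v_0}$ is closed under products and contains the identity (the empty walk). Because $H$ is finite, a nonempty multiplicatively closed subset is a subgroup: each $s$ has finite order $n$, so $s^{-1}=s^{n-1}\in S_{v_0}$. Hence $S_{v_0}=K_{v_0}=H$. Finiteness of $H$ is what makes this step work, since walks in $\Delta$ cannot be traversed backwards to produce inverses directly.

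\textbf{Step 2: connect arbitrary vertices.} Fix $(x,g)$ and $(y,h)$. By strong connectivity of $\Delta$, choose a walk $P$ from $x$ to $v_0$ and a walk $Q$ from $v_0$ to $y$. By Step 1, choose a closed walk $C$ at $v_0$ with
\[
\alpha(C)=\alpha(P)^{-1}g^{-1}h\,\alpha(Q)^{-1}.
\]
The concatenation $PCQ$ is a walk from $x$ to $y$ with voltage $\alpha(P)\alpha(C)\alpha(Q)=g^{-1}h$. Its lift starting at $(x,g)$ therefore ends at $(y,g\cdot g^{-1}h)=(y,h)$. Hence $\Delta^\alpha$ is strongly connected.

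The only potential obstacle is the inverse issue handled in Step 1. The argument needs only finiteness of $H$ and the lifting formula, and the remaining steps are routine.
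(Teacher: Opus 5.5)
Your proof is correct and follows essentially the same approach as the paper: lift a walk through $v_0$, using a closed walk at $v_0$ to correct the voltage. Your route $x\to v_0\to v_0\to y$ is a slightly more direct version of the paper's detour $u\to v$ followed by the closed walk $QWR$ at $v$, and your Step~1 makes explicit something the paper uses without comment: because $H$ is finite, the closed-walk voltages at $v_0$ already form a subgroup, so every element of $K_{v_0}=H$, not just each generator, is the voltage of a single closed walk.
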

\begin{proof}
Let $(u,g),(v,h)\in V(\Delta^\alpha)$. Since $\Delta$ is strongly connected, choose a path $P$ in $\Delta$ from $u$ to $v$, and write $\alpha(P)=a\in H$. Then $P$ lifts to a  path in $\Delta^\alpha$ from $(u,g)$ to $(v,ga)$. Choose directed paths $Q:v\to v_0$ and $R:v_0\to v$ in $\Delta$. Since $K_{v_0}=H$, there exists a closed walk $W$ at $v_0$ such that
\[
\alpha(W)=\alpha(Q)^{-1}a^{-1}g^{-1}h\,\alpha(R)^{-1}.
\]
Hence the closed walk $QWR$ based at $v$ has voltage $a^{-1}g^{-1}h$, and its lift gives a directed path from $(v,ga)$ to $(v,h)$. Therefore $(u,g)$ reaches $(v,h)$, so $\Delta^\alpha$ is strongly connected.
\end{proof}
\begin{remark}\label{rem: combinatorial condition}
The hypothesis $K_{v_0}=H$ is sufficient. In particular, strong connectivity of the base graph alone does not force strong connectivity of the derived graph. A convenient sufficient condition for arranging $K_{v_0}=H$ is the following. Suppose that there exist closed walks $C_1,\dots,C_r$ in $\Delta$, all based at $v_0$, such that, for each $i$, the edge $e_i$ occurs exactly once in $C_i$
and does not occur in any of the walks $C_1,\dots,C_{i-1}$. Then, for any choice of generators $h_1,\dots,h_r$ of $H$, one can choose a voltage assignment $\alpha:E(\Delta)\to H$ such that $\alpha(C_i)=h_i$ for $1\le i\le r$. In particular, for this choice of $\alpha$ one has $K_{v_0}=H$. Indeed, assign voltages inductively. At stage $i$, the value of $\alpha(e_i)$ has not yet been used in any of
$C_1,\dots,C_{i-1}$. Since $e_i$ occurs exactly once in $C_i$, one may
write
\[
\alpha(C_i)=a_i\,\alpha(e_i)\,b_i,
\]
where $a_i,b_i\in H$ are already determined. Setting
\[
\alpha(e_i):=a_i^{-1}h_i b_i^{-1}
\]
gives $\alpha(C_i)=h_i$ without changing any of the previously
prescribed voltages.
The above condition may be viewed as a combinatorial strengthening of the requirement that the fundamental group of the underlying $1$-complex have rank at least the rank $d(H)$ of the group $H$ (the minimal size of a generating set of $H$).
\end{remark}
The next argument is independent of the connectivity of $\Gamma^\alpha$.
\begin{proposition}\label{prop:derived-k-out-not-ts}
Let $\Gamma=(V_\Gamma,E_\Gamma,s,t)$ be a $k$-out graph, let $H$ be a nontrivial finite group, and let $\alpha:E_\Gamma\to H$ be a voltage assignment. Then the derived graph $\Gamma^\alpha$ is a $k$-out graph which is not totally synchronizing.
\end{proposition}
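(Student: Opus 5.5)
The plan is to exhibit an explicit non-synchronizing coloring of $\Gamma^\alpha$ by lifting a coloring of $\Gamma$ and using the deck transformations of the derived graph. No connectivity hypothesis is needed, because we will not pass through Theorem~\ref{theo: main}. Instead we apply the last bullet of Proposition~\ref{pro: synch and automorphisms} directly: an automaton admitting a fixed-point-free automorphism is not synchronizing.

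First, $\Gamma^\alpha$ is $k$-out. The edges with source $(v,h)$ are exactly the pairs $(e,h)$ with $s(e)=v$, and there are $k$ of them.

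Next, fix any automaton coloring $\chi$ of $\Gamma$ and define the lifted coloring by $\widetilde\chi(e,h):=\chi(e)$. For each vertex $(v,h)$ the map $(e,h)\mapsto \chi(e)$ is a bijection from the outgoing star of $(v,h)$ onto $[k]$, so $\widetilde\chi$ is an automaton coloring of $\Gamma^\alpha$. Its transition function is
\[
\delta_{\widetilde\chi}((v,h),a)=(\delta_\chi(v,a),\,h\,\alpha(e_{v,a})),
\]
where $e_{v,a}$ denotes the unique edge of $\Gamma$ with source $v$ and color $a$.

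Now fix a nontrivial element $g\in H$ and define the left multiplication $\lambda_g(v,h):=(v,gh)$ on vertices and $\lambda_g(e,h):=(e,gh)$ on edges. This map commutes with $s$ and $t$: indeed $t(e,gh)=(t(e),gh\alpha(e))=\lambda_g(t(e,h))$. It is bijective and preserves colors, since $\widetilde\chi(e,gh)=\chi(e)=\widetilde\chi(e,h)$. Hence $\lambda_g\in\Aut(\widetilde\chi(\Gamma^\alpha))$; equivalently,
\[
\lambda_g\bigl(\delta_{\widetilde\chi}((v,h),a)\bigr)=\delta_{\widetilde\chi}(\lambda_g(v,h),a).
\]
Because $g\neq 1$, we have $gh\neq h$ for every $h$, so $\lambda_g$ has no fixed vertex, i.e.\ it is a derangement of $V(\Gamma^\alpha)$. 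This uses $V_\Gamma\neq\varnothing$, which holds implicitly since otherwise there is nothing to color.

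By Proposition~\ref{pro: synch and automorphisms}, the automaton $\widetilde\chi(\Gamma^\alpha)$ is therefore not synchronizing, and so $\Gamma^\alpha$ is not totally synchronizing.

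The only point requiring care is the side of the multiplication. Since targets are $(t(e),h\alpha(e))$, with the voltage acting on the right, the deck action must be left multiplication in order to commute with the transitions. Right multiplication would fail when $H$ is non-abelian. Beyond this choice there is no real obstacle.
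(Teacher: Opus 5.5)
Your proof is correct. It shares the paper's skeleton: the same deck transformation $\lambda_g(v,h)=(v,gh)$ and the same conclusion via the derangement clause of Proposition~\ref{pro: synch and automorphisms}.

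Where you differ is in how you obtain a coloring compatible with $\lambda_g$. The paper first checks that $\lambda_\sigma$ is vertex-rigid (if $\lambda_\sigma^m$ fixes a vertex then $\sigma^m=1$). It then invokes the general existence result, Proposition~\ref{prop: rigid in end of automaton}, via Lemmas~\ref{lem:compatible-coloring-partial} and~\ref{lem:compatible-coloring-full}: color the outgoing star of one vertex per $\langle\lambda_\sigma\rangle$-orbit arbitrarily and transport the colors along the orbit. You instead write the coloring down explicitly as the pullback $\widetilde\chi(e,h)=\chi(e)$ of an arbitrary coloring of $\Gamma$. This makes vertex-rigidity and the basin machinery unnecessary. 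It also gives a single coloring on which the whole group $\{\lambda_g: g\in H\}$ acts by automaton automorphisms; in fact the pullbacks are exactly the colorings of $\Gamma^\alpha$ invariant under all of $H$. As a by-product, distinct colorings of $\Gamma$ have distinct lifts, so $\Gamma^\alpha$ has at least $(k!)^{|V_\Gamma|}$ non-synchronizing colorings.

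The paper's route, in exchange, presents the statement as an instance of the general mechanism behind Theorem~\ref{theo: main}: a semiregular, hence vertex-rigid, automorphism forces a non-synchronizing coloring. Because it only requires invariance under the cyclic group $\langle\lambda_\sigma\rangle$, it reaches in general a larger family of non-synchronizing colorings than the $H$-invariant ones you construct.
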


\begin{proof}
For every $(v,h)\in V_\Gamma\times H$, the outgoing edges of $(v,h)$ are exactly the pairs
$(e,h)$ with $s(e)=v$. Hence $\Gamma^\alpha$ is $k$-out. Choose $\sigma\in H\setminus\{1\}$, and define an automorphism $\lambda_\sigma$ of $\Gamma^\alpha$ by
$\lambda_\sigma(v,h)=(v,\sigma h)$ and $\lambda_\sigma(e,h)=(e,\sigma h)$.
Indeed,
$s(\lambda_\sigma(e,h))=(s(e),\sigma h)=\lambda_\sigma(s(e,h))$ and
$t(\lambda_\sigma(e,h))=(t(e),\sigma h\alpha(e))=\lambda_\sigma(t(e,h))$.
Moreover, $\lambda_\sigma$ has no fixed vertex. The automorphism $\lambda_\sigma$ is vertex-rigid. Indeed, if $\lambda_\sigma^m$ fixes some vertex
$(v,h)$, then $(v,\sigma^m h)=(v,h)$, hence $\sigma^m=1$, and therefore $\lambda_\sigma^m=\id$. By Proposition~\ref{prop: rigid in end of automaton}, there exists a coloring $\chi$ of
$\Gamma^\alpha$ such that $\lambda_\sigma\in\End(\chi(\Gamma^\alpha))$. Since $\lambda_\sigma$ is
bijective, it is an automorphism of the automaton $\chi(\Gamma^\alpha)$.
Since $\lambda_\sigma$ has no fixed vertex, Proposition~\ref{pro: synch and automorphisms} implies
that $\chi(\Gamma^\alpha)$ is not synchronizing. Therefore $\Gamma^\alpha$ is not totally
synchronizing.
\end{proof}

\begin{lemma}\label{lem:frucht_extension_mod_edge}
Let $\Gamma=(V_\Gamma,E_\Gamma,s,t)$ be a finite strongly connected $k$-out graph, and let $H$ be a nontrivial finite group. Let $r=d(H)$ be the minimal size of a generating set of $H$. Suppose that:
\begin{itemize}
    \item there exists a permutation $\tau:V_\Gamma\to V_\Gamma$ such that $|\delta_u(\tau(u))|\ge 1$ for every $u\in V_\Gamma$, and such that the graph
    \[
    \Gamma^-=(V_\Gamma,E_\Gamma^-,s,t),
    \]
    obtained by removing exactly one edge from $\delta_u(\tau(u))$ for each $u\in V_\Gamma$, is still strongly connected;

    \item there exist a vertex $v_0\in V_\Gamma$ and closed walks $D_1,\dots,D_r$ in $\Gamma$, all based at $v_0$, such that, for each $i$, the edge $f_i$ occurs exactly once in $D_i$ and does not occur in any of the walks $D_1,\dots,D_{i-1}$.
\end{itemize}
Assume moreover that the voltage assignment $\alpha:E(\widetilde\Gamma)\to H$ produced from this data can be chosen so that parallel edges of $\widetilde\Gamma$ receive the same voltage. Then there exist a finite strongly connected $k$-out graph $G$ and a graph congruence $\rho$ on $G$ such that
\[
G/\rho\cong \Gamma,\qquad \Aut(G)/\Aut_E(G)\cong H,
\]
and $G$ is not totally synchronizing.
\end{lemma}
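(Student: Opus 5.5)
The plan is to build $G$ as the derived graph $\widetilde\Gamma^\alpha$, where $\widetilde\Gamma$ is the rigid refinement of $\Gamma$. First I apply Proposition~\ref{prop:vertex-rigid-refinement-permutation} with the given $\tau$, choosing the integers $L_u$ as in Lemma~\ref{lem:triangular-walks-lift}: pairwise distinct multiples of the $\tau$-cycle lengths. This gives a strongly connected $k$-out graph $\widetilde\Gamma$ with a congruence $\rho_0$ such that $\widetilde\Gamma/\rho_0\cong\Gamma$. By Lemma~\ref{lem:triangular-walks-lift}, $\widetilde\Gamma$ also carries closed walks $\widetilde D_1,\dots,\widetilde D_r$ at $r_{v_0}$ with the triangularity property. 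Next I fix generators $h_1,\dots,h_r$ of $H$ and use Remark~\ref{rem: combinatorial condition} to choose $\alpha$ with $\alpha(\widetilde D_i)=h_i$. Then $K_{r_{v_0}}=H$, and Proposition~\ref{prop:strong-connectivity-derived} shows that $G:=\widetilde\Gamma^\alpha$ is strongly connected. Proposition~\ref{prop:derived-k-out-not-ts} shows that $G$ is $k$-out and not totally synchronizing. The hypothesis allows $\alpha$ to be constant on parallel classes, which I use below.

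For the quotient, I take $\rho$ to be the pullback of $\rho_0$ along $\pi$, i.e.\ $(x,g)\rho(y,h)$ iff $x\rho_0 y$. For $(x,g)$ and a block $B_w$, the outgoing edges $(e,g)$ with $t(e)\in B_w$ are in bijection with $\delta_x(B_w)$. So $\rho$ is lumpable with the same block multiplicities as $\rho_0$, and $G/\rho\cong\widetilde\Gamma/\rho_0\cong\Gamma$.

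For the automorphism group I use the right-regular deck action: $h\mapsto\lambda_{h^{-1}}$ with $\lambda_\sigma(v,g)=(v,\sigma g)$. This gives a homomorphism $H\to\Aut(G)$, and since $\lambda_\sigma$ moves every vertex for $\sigma\ne1$, it is injective modulo $\Aut_E(G)$. It remains to show $\Aut(G)=\Lambda\cdot\Aut_E(G)$, where $\Lambda$ is the deck group.

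Take $\psi\in\Aut(G)$. By Lemma~\ref{lem:fibers-preserved-derived}, $\psi$ preserves every fiber $\{x\}\times H$. Pick $g$ with $\psi(r_{v_0},1)=(r_{v_0},g)$, and set $\psi':=\lambda_{g^{-1}}\psi$, which fixes $(r_{v_0},1)$ and preserves fibers. Consider a vertex $(x,h)$ fixed by $\psi'$ and an edge $(e,h)$ out of it. Its image $\psi'(e,h)$ is an edge out of $(x,h)$, hence of the form $(e',h)$ with $e'$ from $x$. Since $\psi'$ preserves fibers, $t(e')=t(e)$, so $e'$ is parallel to $e$. Because parallel edges carry equal voltage, $(x,h)$ and $(x,h)\cdot$ the edge land at the same target $(t(e),h\alpha(e))$, so that target is fixed. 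Strong connectivity then shows that $\psi'$ fixes every vertex, i.e.\ $\psi'\in\Aut_E(G)$. Thus $\Aut(G)=\Lambda\Aut_E(G)$, and $\Lambda\cap\Aut_E(G)=\{\id\}$ because nontrivial deck maps move vertices. Since $\Aut_E(G)$ is normal, $\Aut(G)/\Aut_E(G)\cong\Lambda\cong H$.

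The main obstacle is this last step, and it is exactly where the parallel-edge voltage hypothesis is needed. Without it, an automorphism could swap two parallel edges of $\widetilde\Gamma$ with different voltages inside a fiber and shift the sheets differently, so $\psi'$ would no longer propagate as a vertex-fixer. I would check carefully that fiber preservation together with equal voltages on parallel classes forces vertex-fixing along every edge.
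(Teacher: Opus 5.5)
Your proposal is correct. Up to the identification of $\Aut(G)/\Aut_E(G)$ it follows the paper exactly: the choice of the $L_u$, the lifted triangular walks, the voltage assignment with $K_{r_{v_0}}=H$, strong connectivity, failure of total synchronization, and the pulled-back congruence $\rho$.

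The genuine difference is the surjectivity step.
\begin{itemize}
\item The paper writes $\psi(v,h)=(v,\theta_v(h))$ using Lemma~\ref{lem:fibers-preserved-derived}. From the equal-voltage hypothesis it derives $\theta_w(h\alpha(e))=\theta_u(h)\alpha(e)$ and extends this identity to all walks. It then uses $K_{r_{v_0}}=H$ to show that $\theta_{r_{v_0}}$ is left translation by $\sigma=\theta_{r_{v_0}}(1)$. Finally it transports this along paths $P_u$ to get $\theta_u(h)=\sigma h$ for every $u$.
\item You instead normalize $\psi$ by a deck transformation so that it fixes $(r_{v_0},1)$. You then show that the fixed-vertex set of a fiber-preserving automorphism is closed under out-neighbours. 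Fiber preservation forces $e'$ to be parallel to $e$, and the hypothesis gives $\alpha(e')=\alpha(e)$. Strong connectivity of $G$ then finishes the argument.
\end{itemize}

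Your route is shorter and avoids both the walk induction and the subgroup argument on $\theta_{r_{v_0}}$. It also shows that this step uses only fiber preservation, equal voltages on parallel classes, and strong connectivity of $G$, not the generation property $K_{r_{v_0}}=H$ directly. The paper's route is more explicit: it produces the formula $\psi_v(u,h)=(u,\sigma h)$ and names the deck element $\sigma$. Both routes rest on Lemma~\ref{lem:fibers-preserved-derived} and the parallel-edge hypothesis in the same way.

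One cosmetic slip needs fixing. With $\lambda_\sigma(v,g)=(v,\sigma g)$, the map $h\mapsto\lambda_{h^{-1}}$ is an anti-homomorphism, since $\lambda_{h_1^{-1}}\lambda_{h_2^{-1}}=\lambda_{(h_2h_1)^{-1}}$. Also, this is the left-regular action, not the right one; right multiplication is not an automorphism of $G$ in general. Simply use $\sigma\mapsto\lambda_\sigma$, which is already an injective homomorphism. In any case, your conclusion only needs the image $\Lambda$ to satisfy $\Lambda\cong H$, which holds either way.
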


\begin{proof}
Choose the refinement in
Proposition~\ref{prop:vertex-rigid-refinement-permutation}
with the integers $L_u$ prescribed in
Lemma~\ref{lem:triangular-walks-lift}.
Thus there exist a finite strongly connected $k$-out graph
$\widetilde\Gamma$ and a graph congruence $\rho_0$ on
$\widetilde\Gamma$ such that
\[
\widetilde\Gamma/\rho_0\cong\Gamma,
\qquad
\Aut(\widetilde\Gamma)=\Aut_E(\widetilde\Gamma),
\]
and there exist closed walks
$\widetilde D_1,\dots,\widetilde D_r$ in $\widetilde\Gamma$, all based
at $r_{v_0}$, such that, for each $i$, the edge $\widetilde f_i$ occurs exactly once in $\widetilde D_i$ and does not occur in any of
$\widetilde D_1,\dots,\widetilde D_{i-1}$.
Choose generators $h_1,\dots,h_r$ of $H$. By remark~\ref{rem: combinatorial condition}, there exists a voltage assignment $\alpha:E(\widetilde\Gamma)\to H$ such that
\[
K_{r_{v_0}}:=\langle \alpha(W): W \text{ is a closed walk in }\widetilde\Gamma\text{ based at }r_{v_0}\rangle = H,
\]
and, by assumption, this assignment may be chosen so that parallel edges of $\widetilde\Gamma$ receive the same voltage. Let $G=\widetilde\Gamma^\alpha$. Since $\widetilde\Gamma$ is strongly connected and $K_{r_{v_0}}=H$, Proposition~\ref{prop:strong-connectivity-derived} implies that $G$ is strongly connected. By Proposition~\ref{prop:derived-k-out-not-ts}, the graph $G$ is $k$-out and is not totally synchronizing.

Let $\pi=(\pi_v,\pi_e):G\to \widetilde\Gamma$ be the natural projection, and define an equivalence relation $\rho$ on $V(G)$ by $(v,h)\mathrel{\rho}(v',h')$ if and only if $v\mathrel{\rho_0}v'$. Let $C$ be a $\rho_0$-class in $\widetilde\Gamma$, and let $\widehat C:=\{(w,g)\in V(G): w\in C\}$ be the corresponding $\rho$-class in $G$. For every $(v,h)\in V(G)$ one has $|\delta_{(v,h)}(\widehat C)|=|\delta_v(C)|$, because the outgoing edges from $(v,h)$ are exactly the pairs $(e,h)$ with $s(e)=v$, and $t(e,h)=(t(e),h\alpha(e))$ belongs to $\widehat C$ if and only if $t(e)\in C$. Since $\rho_0$ is a congruence on $\widetilde\Gamma$, the quantity $|\delta_v(C)|$ depends only on the $\rho_0$-class of $v$. Hence $\rho$ is a graph congruence, and $G/\rho\cong \widetilde\Gamma/\rho_0\cong \Gamma$.

For each $\sigma\in H$, define $\lambda_\sigma=(\lambda_{\sigma,v},\lambda_{\sigma,e})$ by
\[
\lambda_{\sigma,v}(v,h):=(v,\sigma h),\qquad
\lambda_{\sigma,e}(e,h):=(e,\sigma h).
\]
Then $\lambda_\sigma$ is an automorphism of $G$, so $\sigma\mapsto \lambda_\sigma$ gives an injective homomorphism $H\hookrightarrow \Aut(G)$. Moreover, by Lemma~\ref{lem:fibers-preserved-derived}, every automorphism of $G$ preserves each vertex-fiber $\{v\}\times H$ setwise. In particular, each $\lambda_\sigma$ preserves every such fiber. If $\lambda_\sigma\in \Aut_E(G)$, then it fixes every vertex of $G$, hence
\[
(v,\sigma h)=\lambda_{\sigma,v}(v,h)=(v,h)
\qquad\text{for all }(v,h)\in V(G).
\]
This forces $\sigma=1$. Therefore the image of $H$ in $\Aut(G)$ meets $\Aut_E(G)$ trivially, and so the map $\sigma\mapsto \lambda_\sigma$ induces an injective homomorphism
\[
H\hookrightarrow \Aut(G)/\Aut_E(G).
\]
Conversely, let $\psi\in\Aut(G)$. By Lemma~\ref{lem:fibers-preserved-derived}, $\psi$ preserves each vertex-fiber $\{v\}\times H$ setwise. Hence there exist permutations $\theta_v\in\Sym(H)$ such that
\[
\psi(v,h)=(v,\theta_v(h))
\qquad ((v,h)\in V(G)).
\]
Let $e:u\to w$ be an edge of $\widetilde\Gamma$. Since $(u,h)\xrightarrow{(e,h)}(w,h\alpha(e))$ is an edge of $G$, its image under $\psi$ is an edge from $(u,\theta_u(h))$ to $(w,\theta_w(h\alpha(e)))$. Hence $\psi(e,h)=(e',\theta_u(h))$ for some edge $e':u\to w$ in $\widetilde\Gamma$, and therefore
\[
\theta_w(h\alpha(e))=\theta_u(h)\alpha(e').
\]
Since $e'$ is parallel to $e$ and parallel edges have the same voltage by assumption, we get $\alpha(e')=\alpha(e)$. Therefore
\[
\theta_w(h\alpha(e))=\theta_u(h)\alpha(e)
\qquad(h\in H).
\]
By induction on the length, the same formula holds for every walk. In particular, if $W$ is a closed walk based at $r_{v_0}$, then $\theta_{r_{v_0}}(h\,\alpha(W))=\theta_{r_{v_0}}(h)\,\alpha(W)$. Since $K_{r_{v_0}}=H$, it follows that $\theta_{r_{v_0}}(hx)=\theta_{r_{v_0}}(h)x$ for all $h,x\in H$. Taking $h=1$, we obtain $\theta_{r_{v_0}}(x)=\theta_{r_{v_0}}(1)x$, so $\theta_{r_{v_0}}$ is left translation by the element $\sigma:=\theta_{r_{v_0}}(1)\in H$. Now let $u\in V(\widetilde\Gamma)$ and choose a path $P_u$ from $u$ to $r_{v_0}$, with voltage $\alpha(P_u)=a_u$. Applying the walk relation to $P_u$, we obtain $\theta_{r_{v_0}}(h a_u)=\theta_u(h)a_u$, hence we deduce $\theta_u(h)=\theta_{r_{v_0}}(h a_u)a_u^{-1}$. Since $\theta_{r_{v_0}}$ is left translation by $\sigma$, it follows that $\theta_u(h)=(\sigma h a_u)a_u^{-1}=\sigma h$. Therefore $\psi_v(u,h)=(u,\sigma h)$ for every $(u,h)\in V(G)$. Therefore $\lambda_\sigma^{-1}\psi$ fixes every vertex of $G$, i.e. belongs to $\Aut_E(G)$. This shows that every class in $\Aut(G)/\Aut_E(G)$ is represented by some deck transformation $\lambda_\sigma$, and hence the natural map $H\to \Aut(G)/\Aut_E(G)$ is surjective. Consequently, $\Aut(G)/\Aut_E(G)\cong H$.
\end{proof}
We are now in position to prove the main theorem of this section.
\begin{theorem}\label{thm:one-loop-per-vertex}
Let $\Gamma=(V_\Gamma,E_\Gamma,s,t)$ be a finite strongly connected $k$-out graph, and let $H$ be a nontrivial finite group. Assume that $|V_\Gamma|\ge d(H)$. Let $\Gamma^\sharp$ be the graph obtained from $\Gamma$ by adding exactly one loop at every vertex. Then $\Gamma^\sharp$ is a strongly connected $(k+1)$-out graph, and there exist a finite strongly connected $(k+1)$-out graph $G$ and a graph congruence $\rho$ on $G$ such that
\[
G/\rho\cong \Gamma^\sharp,\qquad \Aut(G)/\Aut_E(G)\cong H,
\]
and $G$ is not totally synchronizing.
\end{theorem}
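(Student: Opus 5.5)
The plan is to reduce Theorem~\ref{thm:one-loop-per-vertex} to Lemma~\ref{lem:frucht_extension_mod_edge}, applied to the graph $\Gamma^\sharp$ with $k+1$ in place of $k$. First, $\Gamma^\sharp$ is $(k+1)$-out, since each vertex gains exactly one outgoing edge. It is strongly connected, since adding loops changes no reachability. The remaining work is to verify the three hypotheses of Lemma~\ref{lem:frucht_extension_mod_edge} for $\Gamma^\sharp$.

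For the permutation hypothesis, take $\tau=\id_{V_\Gamma}$. Then $|\delta_u(\tau(u))|\ge 1$ because of the added loop at $u$. Removing exactly that added loop from each $\delta_u(u)$ returns $\Gamma$ itself, which is strongly connected and $k$-out. So the graph $(\Gamma^\sharp)^-=\Gamma$ satisfies the first bullet.

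For the triangular closed walks, put $r=d(H)\le |V_\Gamma|$ and choose distinct vertices $w_1,\dots,w_r$ and a base vertex $v_0$. Since $\Gamma$ is strongly connected, fix paths $P_i:v_0\to w_i$ and $Q_i:w_i\to v_0$ in $\Gamma$. Let $D_i:=P_i\,\ell_{w_i}\,Q_i$, where $\ell_{w_i}$ is the added loop at $w_i$, and set $f_i:=\ell_{w_i}$. The walks $P_i,Q_i$ lie in $\Gamma$ and so contain no added loops. Hence $f_i$ occurs exactly once in $D_i$, and it does not occur in $D_j$ for $j\ne i$, because the $w_j$ are distinct. This is the only place where $|V_\Gamma|\ge d(H)$ is used.

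The main obstacle is the extra hypothesis that the voltage assignment on $\widetilde{\Gamma^\sharp}$ can be chosen so that parallel edges receive equal voltages. I would handle it by inspecting the inductive assignment in Remark~\ref{rem: combinatorial condition}, combined with the lifted walks from Lemma~\ref{lem:triangular-walks-lift}. With $\tau=\id$, the loop $\ell_{w_i}$ is exactly the edge replaced by the coding walk at $w_i$. By the proof of Lemma~\ref{lem:triangular-walks-lift}, we may therefore take $\widetilde f_i=(r_{w_i}\to c_{w_i,1})$, the first coding edge.

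Coding edges into non-root vertices are never parallel to anything. In particular the first coding edge $r_{w_i}\to c_{w_i,1}$ is not parallel to any other edge, since $c_{w_i,1}$ has in-degree $1$.

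I would then set $\alpha\equiv 1$ on every edge other than $\widetilde f_1,\dots,\widetilde f_r$. The values $\alpha(\widetilde f_i)$ are fixed inductively by the rule in Remark~\ref{rem: combinatorial condition}. That rule only assigns the distinguished edges, each of which is non-parallel, so parallel classes stay constant at $1$. One must also check that a terminal coding edge $c_{u,L_u}\to r_u$ parallel to a core edge still gets voltage $1$. It does, since it is not one of the $\widetilde f_i$.

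With all hypotheses verified, Lemma~\ref{lem:frucht_extension_mod_edge} yields $G$, $\rho$ with $G/\rho\cong\Gamma^\sharp$, $\Aut(G)/\Aut_E(G)\cong H$, and $G$ not totally synchronizing.
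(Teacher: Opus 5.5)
Your proposal is correct and follows the paper's proof essentially step by step. You take $\tau=\id$, form the closed walks $P_i\,\ell_{w_i}\,Q_i$ through the added loops, and lift them via Lemma~\ref{lem:triangular-walks-lift} with $\widetilde f_i=(r_{w_i}\to c_{w_i,1})$, which is unparalleled because it is the unique edge into $c_{w_i,1}$. You then apply Lemma~\ref{lem:frucht_extension_mod_edge}. Your explicit choice $\alpha\equiv 1$ on all non-distinguished edges is a concrete instance of the paper's ``common voltage on every non-distinguished parallel class''. It also makes the parallel-edge hypothesis transparent, including for terminal coding edges that are parallel to core loops.
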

\begin{proof}
Set $r:=d(H)$. Since $|V_\Gamma|\ge r$, choose pairwise distinct vertices $u_1,\dots,u_r\in V_\Gamma$, and fix a base vertex $v_0\in V_\Gamma$. For each $i$, let $\ell_i$ denote the loop added at $u_i$. Since $\Gamma$ is strongly connected, for each $i$ there exist directed paths $P_i$ from $v_0$ to $u_i$ and $Q_i$ from $u_i$ to $v_0$. Choosing these paths without repeated vertices, we may assume that none of them uses a loop. Define a closed walk based at $v_0$ by $D_i:=P_i\,\ell_i\,Q_i$. Then, for each $i$, the edge $\ell_i$ occurs exactly once in $D_i$ and does not occur in any of the walks $D_1,\dots,D_{i-1}$. Indeed, the loops $\ell_1,\dots,\ell_r$ are pairwise distinct, and none of the
paths $P_j,Q_j$ uses a loop. Now apply Proposition~\ref{prop:vertex-rigid-refinement-permutation} to the graph $\Gamma^\sharp$, taking $\tau=\id_{V_\Gamma}$. The first hypothesis is automatic, since every vertex of $\Gamma^\sharp$ carries a loop, and removing those added loops recovers $\Gamma$, which is still strongly connected. Thus we obtain a finite strongly connected $(k+1)$-out graph $\widetilde\Gamma$ and a graph congruence $\rho_0$ on $\widetilde\Gamma$ such that $\widetilde\Gamma/\rho_0\cong \Gamma^\sharp$ and $\Aut(\widetilde\Gamma)=\Aut_E(\widetilde\Gamma)$. By Lemma~\ref{lem:triangular-walks-lift}, the walks
$D_1,\dots,D_r$ lift to closed walks
$\widetilde D_1,\dots,\widetilde D_r$ in $\widetilde\Gamma$, all based
at $r_{v_0}$, such that, for each $i$, the edge $\widetilde f_i$
occurs exactly once in $\widetilde D_i$ and does not occur in any of
the walks $\widetilde D_1,\dots,\widetilde D_{i-1}$. Choose generators $h_1,\dots,h_r$ of $H$. By remark~\ref{rem: combinatorial condition} one can choose a voltage assignment $\alpha:E(\widetilde\Gamma)\to H$ such that $K_{r_{v_0}}=H$. Moreover, for each $i$, the edge $f_i$ is the added loop at the
vertex $u_i$, and in Lemma~\ref{lem:triangular-walks-lift} the
corresponding distinguished edge may be chosen as
\[
\widetilde f_i=r_{u_i}\longrightarrow c_{u_i,1}.
\]
Each $\widetilde f_i$ is the unique edge of $\widetilde\Gamma$ ending
at $c_{u_i,1}$, and therefore it has no parallel edge. Consequently,
in the triangular construction of the voltage assignment, the value
of $\alpha(\widetilde f_i)$ may be chosen independently, while every
non-distinguished parallel class is assigned a common voltage.
Thus $\alpha$ may be chosen so that
\[
K_{r_{v_0}}=H
\]
and parallel edges of $\widetilde\Gamma$ receive the same voltage. Now apply Lemma~\ref{lem:frucht_extension_mod_edge} to $\Gamma^\sharp$ to conclude the proof. 
\end{proof}
An interesting case in which the vertex-trivial automorphisms disappear is given by the following corollary. Indeed, if the initial graph has no parallel edges and no loops, then after adding one loop at each vertex the graph $\Gamma^\sharp$ is still parallel-free, and one checks directly from the refinement and derived-graph constructions that no parallel edges are created at any subsequent step. We leave the straightforward verification to the reader. In particular, the final graph $G$ is parallel-free, hence $\Aut_E(G)=1$.
\begin{corollary}\label{cor:one-loop-per-vertex-parallel-free}
Let $\Gamma=(V_\Gamma,E_\Gamma,s,t)$ be a finite strongly connected $k$-out graph without parallel edges and without loops, and let $H$ be a nontrivial finite group. Assume that $|V_\Gamma|\ge d(H)$. Let $\Gamma^\sharp$ be the graph obtained from $\Gamma$ by adding exactly one loop at every vertex. Then there exist a finite strongly connected $(k+1)$-out graph $G$ and a graph congruence $\rho$ on $G$ such that
\[
G/\rho\cong \Gamma^\sharp,\qquad \Aut(G)\cong H,
\]
and $G$ is not totally synchronizing.
\end{corollary}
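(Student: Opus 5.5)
The plan is to apply Theorem~\ref{thm:one-loop-per-vertex} verbatim and then show that, under the extra hypotheses, every graph produced along the way is parallel-free, so that $\Aut_E(G)$ is trivial and $\Aut(G)\cong H$. The argument proceeds in three stages, following the three constructions: $\Gamma^\sharp$, then the refinement $\widetilde\Gamma$ with $\tau=\id$, then the derived graph $G=\widetilde\Gamma^\alpha$.

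\emph{First stage.} $\Gamma$ has no loops and no parallel edges, and we add exactly one loop at each vertex. Hence $\Gamma^\sharp$ satisfies $m(v,w)\le 1$ for $w\neq v$ and $m(v,v)=1$. With $\tau=\id$, these are precisely conditions (1)--(3) of Proposition~\ref{prop:parallel-free-rigidification}: each $\tau$-cycle is a single vertex $g$ with $m(g,\tau(g))=1$. So $m^-(v,v)=0$, and the core edges from any $x\in B_v$ go to distinct roots $r_w$ with $w\neq v$, at most one to each.

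\emph{Second stage.} The integers $L_u$ are fixed by Lemma~\ref{lem:triangular-walks-lift}, which requires $o(u)\mid L_u$. Since $o(u)=1$, any pairwise distinct $L_u$ are allowed. This choice is therefore compatible with the choice made in Proposition~\ref{prop:parallel-free-rigidification}, where $\tau^{L_u}(u)=u=g_O$ holds automatically.

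I then check directly that $\widetilde\Gamma$ is parallel-free:
\begin{itemize}
\item the coding edge out of a non-terminal vertex ends at a non-root, while core edges end at roots, so they cannot be parallel;
\item the terminal coding edge $c_{u,L_u}\to r_{u}$ leaves from $B_u$, and there is no core edge from $B_u$ to $r_u$ because $m^-(u,u)=0$.
\end{itemize}

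\emph{Third stage.} I show that the derived graph has no parallel edges. Suppose $(e,h)$ and $(e',h')$ are parallel in $G$. Equal sources force $h=h'$ and $s(e)=s(e')$. Equal targets force $t(e)=t(e')$. Since $\widetilde\Gamma$ is parallel-free, $e=e'$. Hence $G$ is parallel-free.

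In a parallel-free graph, an automorphism that fixes every vertex also fixes every edge, so $\Aut_E(G)=1$. Combined with $\Aut(G)/\Aut_E(G)\cong H$ from Theorem~\ref{thm:one-loop-per-vertex}, this gives $\Aut(G)\cong H$. The congruence $\rho$, strong connectivity, and non-total-synchronization are all inherited directly from that theorem.

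The one point needing care is the hypothesis of Lemma~\ref{lem:frucht_extension_mod_edge} that parallel edges receive equal voltage. With no parallel edges this hypothesis is vacuous, so the voltage assignment from Remark~\ref{rem: combinatorial condition} can be used unchanged.

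I expect the main obstacle to be the consistency check in the second stage. The refinement used inside Theorem~\ref{thm:one-loop-per-vertex} must be exactly the parallel-free one. This works because, for $\tau=\id$, the constraints from Lemma~\ref{lem:triangular-walks-lift} and from Proposition~\ref{prop:parallel-free-rigidification} both reduce to "pairwise distinct $L_u$", so no conflict arises.
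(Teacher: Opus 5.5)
Your proposal is correct and follows the same route as the paper. You invoke Theorem~\ref{thm:one-loop-per-vertex} (with $\tau=\id$ and the $L_u$ of Lemma~\ref{lem:triangular-walks-lift}), check that $\Gamma^\sharp$, the refinement $\widetilde\Gamma$ and the derived graph $G$ are all parallel-free, and conclude $\Aut_E(G)=1$, hence $\Aut(G)\cong H$. The paper leaves this verification to the reader; your stage-by-stage check, including the observation that for $\tau=\id$ the constraints on the $L_u$ from Lemma~\ref{lem:triangular-walks-lift} and Proposition~\ref{prop:parallel-free-rigidification} are compatible, supplies exactly those details.
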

The previous results provide a broad family of $k$-out graphs that are not totally synchronizing and admit many symmetries, even in situations where the corresponding quotient may itself be totally synchronizing. This naturally raises the problem of constructing totally synchronizing $k$-out graphs with a prescribed automorphism group. The voltage-lift approach developed above cannot be used for this purpose. Indeed, every nontrivial derived graph carries nontrivial fixed-point-free deck transformations. These transformations are vertex-rigid, hence they become automorphisms of suitable colorings; by Proposition~\ref{pro: synch and automorphisms}, every such derived graph admits a non-synchronizing coloring. Thus the realization problem in the totally synchronizing setting appears to be genuinely different from the non-synchronizing one. This motivates the following question.
\begin{open}
Given a finite group $H$ satisfying the conditions of Theorem~\ref{theo: main}, characterize those strongly connected $k$-out graphs $G$ which are totally synchronizing and satisfy $\Aut(G)/\Aut_E(G)\cong H$. More specifically, determine which finite groups can occur as automorphism groups of totally synchronizing $k$-out graphs.
\end{open}
Nonetheless, we give a partial result by exploiting the ``rigid refinement" construction of Proposition~\ref{prop:vertex-rigid-refinement-permutation}.

\begin{lemma}\label{lem:id-refinement-pf-criterion}
Assume that in the refinement construction of Proposition~\ref{prop:vertex-rigid-refinement-permutation} one takes $\tau=\id$. Let $\widetilde\Gamma$ be the resulting rigid refinement, let $\mu\in \mathbb N_{>0}^{V(\Gamma)}$ be the integer-normalized left Perron--Frobenius eigenvector of the quotient graph $\Gamma$, and let $\pi\in\mathbb N_{>0}^{V(\widetilde\Gamma)}$ be the integer-normalized left Perron--Frobenius eigenvector of $\widetilde\Gamma$. For each $u\in V(\Gamma)$, set $\beta_u:=1+\frac1k+\cdots+\frac1{k^{L_u}}$. If there exists $u\in V(\Gamma)$ such that
\[
\frac{\mu(u)}{\beta_u}>\frac12\sum_{v\in V(\Gamma)}\mu(v),
\]
then $\pi$ is not equipartitionable. Hence $\widetilde\Gamma$ is totally synchronizing.
\end{lemma}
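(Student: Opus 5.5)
The plan is to compute the left Perron--Frobenius eigenvector $\pi$ of $\widetilde\Gamma$ explicitly along the coding chains. We then compare its block sums with $\mu$ via Lemma~\ref{lem:pf-lump-part}, and conclude with Lemma~\ref{lem:pf-dominating-coordinate} and the consequence of Theorem~\ref{thm:friedman-weight}.

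First, I would record the shape of $\widetilde\Gamma$ when $\tau=\id$. Then $c_{u,i}\in B_{\tau^i(u)}=B_u$ for every $i$. The coding chain of $u$ is the closed walk $r_u\to c_{u,1}\to\cdots\to c_{u,L_u}\to r_{\tau^{L_u+1}(u)}=r_u$. So $B_u=\{r_u,c_{u,1},\dots,c_{u,L_u}\}$. By Lemma~\ref{lem:strong-connectivity-refinement-permutation}, $\widetilde\Gamma$ is strongly connected, so $\pi$ exists, is positive, and is unique up to scaling.

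Next, I would use the in-edge structure from the proof of Lemma~\ref{lem:vertex-rigidity-refinement-permutation}. The only edge entering $c_{u,1}$ comes from $r_u$, and the only edge entering $c_{u,i}$ with $i>1$ comes from $c_{u,i-1}$. The eigen-equation $\pi^\top A=k\pi^\top$ at these vertices therefore reads $k\pi(c_{u,1})=\pi(r_u)$ and $k\pi(c_{u,i})=\pi(c_{u,i-1})$. Hence
\[
\pi(c_{u,i})=\frac{\pi(r_u)}{k^i}\qquad\text{and}\qquad \sum_{x\in B_u}\pi(x)=\beta_u\,\pi(r_u).
\]
By Lemma~\ref{lem:construction-refinement-permutation}, $\{B_u\}$ is lumpable with quotient $\Gamma$. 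Lemma~\ref{lem:pf-lump-part} then gives $\alpha>0$ such that $\beta_u\pi(r_u)=\alpha\mu(u)$ for all $u$. This yields $\pi(r_u)=\alpha\mu(u)/\beta_u$ and $\sum_{x}\pi(x)=\alpha\sum_v\mu(v)$.

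Finally, the hypothesis multiplied by $\alpha$ gives $\pi(r_u)>\frac12\sum_x\pi(x)$. This inequality is invariant under positive scaling, so it holds for the integer-normalized representative. Lemma~\ref{lem:pf-dominating-coordinate} then shows that $\pi$ is not equipartitionable. Since $\widetilde\Gamma$ is strongly connected, the remark following Theorem~\ref{thm:friedman-weight} gives total synchronization.

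The only delicate point is the eigen-equation at the chain vertices: one must check that no core edge ends at a non-root vertex, so that the in-neighbourhoods are exactly as claimed. This is already established in the proof of Lemma~\ref{lem:vertex-rigidity-refinement-permutation}. Everything else is bookkeeping.
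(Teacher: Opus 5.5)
Your proposal is correct and follows the paper's proof essentially step for step. Both use the chain recursion $k\pi(c_{u,i})=\pi(c_{u,i-1})$ to get block sums $\beta_u\pi(r_u)$, Lemma~\ref{lem:pf-lump-part} to obtain $\pi(r_u)=\alpha\mu(u)/\beta_u$ and $\sum_x\pi(x)=\alpha\sum_v\mu(v)$, and then Lemma~\ref{lem:pf-dominating-coordinate} together with Theorem~\ref{thm:friedman-weight} to conclude (your scaling-invariance remark is harmless but unnecessary, since Lemma~\ref{lem:pf-lump-part} is already stated for the integer-normalized vectors).
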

\begin{proof}
Since $\tau=\id$, each block has the form $B_u=\{r_u,c_{u,1},\dots,c_{u,L_u}\}$, and the coding chain inside $B_u$ is $r_u\to c_{u,1}\to \cdots \to c_{u,L_u}\to r_u$. Fix $u\in V(\Gamma)$. For each $1\le i\le L_u$, the only edge entering $c_{u,i}$ is the coding edge from its predecessor in the chain. Therefore $k\,\pi(c_{u,1})=\pi(r_u)$ and, for $2\le i\le L_u$, one has $k\,\pi(c_{u,i})=\pi(c_{u,i-1})$. By induction, $\pi(c_{u,i})=\pi(r_u)/k^i$ for all $1\le i\le L_u$. Hence
\(
\sum_{x\in B_u}\pi(x)=\pi(r_u)\beta_u.
\)
Now apply Lemma~\ref{lem:pf-lump-part} to the lumpable partition $\rho_0$ of $\widetilde\Gamma$. Since $\widetilde\Gamma/\rho_0\simeq \Gamma$, there exists $\alpha\in\mathbb N_{>0}$ such that $\sum_{x\in B_u}\pi(x)=\alpha\,\mu(u)$ for every $u\in V(\Gamma)$. Therefore $\pi(r_u)\beta_u=\alpha\,\mu(u)$, hence $\pi(r_u)=\alpha\,\mu(u)/\beta_u$.
Now suppose that there exists $u\in V(\Gamma)$ such that $\mu(u)/\beta_u>\frac12\sum_{v\in V(\Gamma)}\mu(v)$. Multiplying by $\alpha$, we get $\pi(r_u)>\frac{\alpha}{2}\sum_{v\in V(\Gamma)}\mu(v)$. On the other hand, $\sum_{x\in V(\widetilde\Gamma)}\pi(x)=\sum_{v\in V(\Gamma)}\sum_{x\in B_v}\pi(x)=\alpha\sum_{v\in V(\Gamma)}\mu(v)$. Thus
\(
\pi(r_u)>\frac12\sum_{x\in V(\widetilde\Gamma)}\pi(x).
\)
By Lemma~\ref{lem:pf-dominating-coordinate}, $\pi$ is not equipartitionable, and thus $\widetilde\Gamma$ is totally synchronizing.
\end{proof}

\begin{corollary}\label{cor:bouquet-rigidification-ts}
Let $\Gamma$ be the bouquet with one vertex and $k$ loops, and let $\widetilde\Gamma$ be the rigid refinement obtained by choosing a single coding chain of length $L\ge 1$. Then $\widetilde\Gamma$ is totally synchronizing.
\end{corollary}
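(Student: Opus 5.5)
The plan is to apply Lemma~\ref{lem:id-refinement-pf-criterion} directly. The hypotheses of Proposition~\ref{prop:vertex-rigid-refinement-permutation} hold for the bouquet $\Gamma$ with $k\ge 2$. The only permutation of $V(\Gamma)=\{u\}$ is $\tau=\id$, and $|\delta_u(u)|=k\ge 1$. Removing one loop leaves a single vertex carrying $k-1$ loops, which is trivially strongly connected. The condition that the integers $L_u$ be pairwise distinct is vacuous, since there is only one chain, of length $L$. So the rigid refinement $\widetilde\Gamma$ is well defined. Explicitly, it is the chain
\[
r_u\to c_{u,1}\to\cdots\to c_{u,L}\to r_u,
\]
together with $k-1$ core edges from every vertex to $r_u$.

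Next I would identify the data in Lemma~\ref{lem:id-refinement-pf-criterion}. The adjacency matrix of $\Gamma$ is the $1\times 1$ matrix $(k)$, so its integer-normalized left Perron--Frobenius eigenvector is $\mu=(1)$. Hence $\sum_v\mu(v)=1$, and the criterion reduces to
\[
\frac{1}{\beta_u}>\frac12,\qquad\text{that is,}\qquad \beta_u<2,
\]
where $\beta_u=1+\frac1k+\cdots+\frac1{k^{L}}$.

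The key estimate is a geometric-series bound. Since the sum is finite and $k\ge 2$,
\[
\beta_u<\sum_{i\ge 0}k^{-i}=\frac{k}{k-1}\le 2,
\]
and the first inequality is strict. Therefore $\mu(u)/\beta_u>\frac12\sum_v\mu(v)$, and Lemma~\ref{lem:id-refinement-pf-criterion} shows that the Perron--Frobenius eigenvector $\pi$ of $\widetilde\Gamma$ is not equipartitionable. By Theorem~\ref{thm:friedman-weight}, $\widetilde\Gamma$ is then totally synchronizing.

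There is no real obstacle. The only points needing care are checking that the standing hypothesis $k\ge 2$ is in force, which is needed both for the construction and for the bound $k/(k-1)\le 2$, and noting that the inequality is strict because the sum is finite. This strictness matters exactly in the case $k=2$, where $k/(k-1)=2$.
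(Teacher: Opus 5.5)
Your proposal is correct and is essentially the paper's proof: with $\mu=(1)$ and $\beta_u=1+\frac1k+\cdots+\frac1{k^L}<2$, the inequality $\mu(u)/\beta_u>\frac12\sum_v\mu(v)$ holds, and Lemma~\ref{lem:id-refinement-pf-criterion} gives total synchronization. Your extra checks make explicit what the paper leaves implicit: the hypotheses of Proposition~\ref{prop:vertex-rigid-refinement-permutation} with $\tau=\id$, and the strict bound $\beta_u<k/(k-1)\le 2$, which uses $k\ge 2$.
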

\begin{proof}
In this case $\Gamma$ has a single vertex $u$, its integer-normalized left Perron--Frobenius eigenvector is $\mu(u)=1$, and $\beta_u=1+\frac1k+\cdots+\frac1{k^L}<2$. Hence
\(
\frac{\mu(u)}{\beta_u}=\frac1{\beta_u}>\frac12=\frac12\sum_{v\in V(\Gamma)}\mu(v).
\)
Now apply Lemma~\ref{lem:id-refinement-pf-criterion}.
\end{proof}

There is another simple sufficient condition for total synchronization in the case $\tau=\id$.
\begin{corollary}
Let $\mu$ be the integer-normalized left Perron--Frobenius eigenvector of the quotient $\Gamma$.
If some $u\in V(\Gamma)$ satisfies
\(
\mu(u)>\frac{k}{2(k-1)}\sum_{v\in V(\Gamma)}\mu(v),
\)
then $\widetilde\Gamma$ is totally synchronizing, 
\end{corollary}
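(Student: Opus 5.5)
This is a direct reduction to Lemma~\ref{lem:id-refinement-pf-criterion}. The key point is to bound $\beta_u$ uniformly in $L_u$ by a geometric series. Since $k\ge 2$,
\[
\beta_u=\sum_{i=0}^{L_u}k^{-i}<\sum_{i=0}^{\infty}k^{-i}=\frac{k}{k-1},
\qquad\text{hence}\qquad
\frac{1}{\beta_u}>\frac{k-1}{k}.
\]
This strict inequality holds for every admissible choice of the chain length $L_u\ge 1$. So the conclusion will not depend on how the integers $L_u$ were chosen in the refinement.

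Now take $u\in V(\Gamma)$ with $\mu(u)>\frac{k}{2(k-1)}\sum_{v}\mu(v)$. Multiplying by the previous bound gives
\[
\frac{\mu(u)}{\beta_u}>\frac{k-1}{k}\,\mu(u)>\frac{k-1}{k}\cdot\frac{k}{2(k-1)}\sum_{v\in V(\Gamma)}\mu(v)=\frac12\sum_{v\in V(\Gamma)}\mu(v).
\]
The first inequality uses $\mu(u)>0$, which holds because $\mu$ is a positive Perron--Frobenius vector.

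This is exactly the hypothesis of Lemma~\ref{lem:id-refinement-pf-criterion}. That lemma then says the integer-normalized Perron--Frobenius eigenvector $\pi$ of $\widetilde\Gamma$ is not equipartitionable. By Theorem~\ref{thm:friedman-weight} and the remark following it, $\widetilde\Gamma$ is therefore totally synchronizing.

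There is no real obstacle here. The only points to check are that the refinement is the one with $\tau=\id$, which is implicit in the phrase ``in the case $\tau=\id$'', and that $k\ge 2$, which is needed for $\frac{k}{k-1}$ to be finite. The case $k\ge2$ is the standing assumption of Proposition~\ref{prop:vertex-rigid-refinement-permutation}.

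One remark on scope: the hypothesis forces $\frac{k}{2(k-1)}<1$, which already requires $k>2$ unless $\Gamma$ has a single vertex. That does not affect the argument; it only limits the range in which the corollary says anything.
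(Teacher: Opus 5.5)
Your proof is correct and is exactly the paper's argument: bound $\beta_u<\frac{k}{k-1}$ by the geometric series, deduce $\mu(u)/\beta_u>\frac12\sum_v\mu(v)$, and invoke Lemma~\ref{lem:id-refinement-pf-criterion}. There is one small slip in your closing aside: since $\mu(u)\le\sum_v\mu(v)$ always, the hypothesis forces $k>2$ even when $\Gamma$ has a single vertex, so the qualifier ``unless $\Gamma$ has a single vertex'' should be dropped (this does not affect the proof).
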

\begin{proof}
Since $\beta_u<\frac{k}{k-1}$ implies
$\mu(u)/\beta_u>\frac{k-1}{k}\mu(u)>\frac12\sum_v\mu(v)$. then Lemma~\ref{lem:id-refinement-pf-criterion} applies.
\end{proof}

\begin{figure}[ht]
\centering
\begin{tikzpicture}[>=stealth, every node/.style={font=\small}]
  \begin{scope}[xshift=0cm]
    \node[circle,draw,minimum size=18pt] (v) at (0,0) {$v$};

    \draw[->,looseness=8,out=120,in=60] (v) to node[above] {$k$ loops} (v);

    \node at (0,-1.5) {$\Gamma$};
  \end{scope}

  \node at (3,0) {$\rightsquigarrow$};

  \begin{scope}[xshift=6cm]
    \node[circle,draw,minimum size=18pt] (r) at (0,0) {$r$};
    \node[circle,draw,minimum size=18pt] (c1) at (2,0) {$c_1$};
    \node[circle,draw,minimum size=18pt] (c2) at (4,0) {$c_2$};
    \node at (5.5,0) {$\cdots$};
    \node[circle,draw,minimum size=18pt] (cL) at (7,0) {$c_L$};

    \draw[->,thick] (r) -- (c1);
    \draw[->,thick] (c1) -- (c2);
    \draw[->,thick] (c2) -- (5.1,0);
    \draw[->,thick] (5.9,0) -- (cL);
    \draw[->,thick,bend left=30] (cL) to (r);

    \draw[->,bend left=35] (c1) to node[above] {\scriptsize $k-1$} (r);
    \draw[->,bend left=35] (c2) to (r);
    \draw[->,bend left=35] (cL) to (r);
    \draw[->,looseness=7,out=120,in=60] (r) to node[above] {\scriptsize $k-1$} (r);

    \node at (3.5,-1.5) {$\widetilde{\Gamma}$};
  \end{scope}
\end{tikzpicture}
\caption{The bouquet with one vertex and $k$ loops, and its rigid refinement with one coding chain of length $L$. In the refinement, the distinguished cycle is $r\to c_1\to \cdots \to c_L\to r$, and every vertex carries $k-1$ additional edges to $r$.}
\end{figure}

\section{Totally simple graphs}
It turn out that lumpability is connected to congruences of automata, for this reason we give the following definition. 
\begin{definition}
A $k$-out graph $G$ is called totally simple if for any coloring $\chi$, $\chi(G)$ is a simple automaton. 
\end{definition}
Note that for every non-trivial maximal congruence $\rho\in\Cong(G)$, $G/\rho$ is totally-simple. 
\begin{lemma}\label{lem:lifting-coloring}
Let $G=(V,E,s,t)$ be a finite $k$-out graph and let $\rho_{\mathcal C}$ be a congruence on $G$ with equivalence classes $\mathcal C=\{C_1,\dots,C_m\}$. Let $\varphi:G\to G/\rho_{\mathcal{C}}$ denote the quotient map.
Every valid coloring $\chi$ of the quotient $G/\rho_{\mathcal C}$ lifts to a valid coloring $\widetilde{\chi}$ of $G$ such that $\rho_{\mathcal C}\in \mathrm{Cong}(\widetilde{\chi}(G))$; moreover, $\widetilde{\chi}$ induces $\chi$ on the quotient.
\end{lemma}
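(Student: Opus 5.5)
The plan is to pull the coloring back along the quotient homomorphism of Remark~\ref{rem:quotient-homomorphism-choice}. Fix such a $k$-out homomorphism $\varphi=(\varphi_v,\varphi_e):G\to G/\rho_{\mathcal C}$, with $\varphi_v(x)=[x]_{\rho_{\mathcal C}}$ and $\varphi_e$ restricting, for every $x\in V$, to a bijection from the outgoing star $\delta_x(V)$ onto the outgoing star of $[x]_{\rho_{\mathcal C}}$. Given a coloring $\chi$ of the quotient, I will set
\[
\widetilde\chi:=\chi\circ\varphi_e:E\to[k].
\]

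First I will check that $\widetilde\chi$ is a valid automaton coloring. For each $x\in V$, the restriction $\widetilde\chi|_{\delta_x(V)}$ is the composite of the bijection $\varphi_e|_{\delta_x(V)}$ onto the outgoing star of $[x]$ with the bijection $\chi|_{\delta_{[x]}(V/\rho_{\mathcal C})}$ onto $[k]$. It is therefore a bijection onto $[k]$.

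Next comes the key compatibility identity. Let $e$ be the unique edge out of $x$ with $\widetilde\chi(e)=a$. Then $\varphi_e(e)$ is the unique edge out of $[x]$ with $\chi$-color $a$, and since $\varphi$ commutes with targets,
\[
[\delta_{\widetilde\chi}(x,a)]=\varphi_v(t(e))=t(\varphi_e(e))=\delta_\chi([x],a).
\]
This says precisely that the vertex map $\varphi_v$ is an automaton homomorphism $\widetilde\chi(G)\to\chi(G/\rho_{\mathcal C})$. Both remaining claims follow from it:
\begin{itemize}
\item If $x\mathrel{\rho_{\mathcal C}}y$, then $[x\cdot a]=\delta_\chi([x],a)=\delta_\chi([y],a)=[y\cdot a]$. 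Hence $\rho_{\mathcal C}\in\Cong(\widetilde\chi(G))$.
\item The quotient automaton $\widetilde\chi(G)/\rho_{\mathcal C}$ has transitions $[x]\cdot a=[x\cdot a]=\delta_\chi([x],a)$, so it coincides with $\chi(G/\rho_{\mathcal C})$. Thus $\widetilde\chi$ induces $\chi$.
\end{itemize}

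The only delicate point is the existence of the $k$-out homomorphism $\varphi$ when the quotient has parallel edges, where the edge map is not canonical. Remark~\ref{rem:quotient-homomorphism-choice} already settles this by choosing the bijections $\beta_{x,[w]}$ using lumpability. Because the argument works for any such choice, the lift is not unique, but the statement only requires existence.
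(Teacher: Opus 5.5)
Your proposal is correct and is essentially the paper's argument. Composing the bijections $\beta_{x,[w]_\rho}$ of Remark~\ref{rem:quotient-homomorphism-choice} with $\chi$ is the same as the paper's choice of an arbitrary bijection from $\delta_u(C_j)$ onto the colour set $L_{i,j}=\{t\in[k]:\delta'(C_i,t)=C_j\}$, and the congruence check is then identical. Your packaging through the identity $[x\cdot a]=\delta_\chi([x],a)$, i.e.\ that $\varphi_v$ is an automaton homomorphism $\widetilde\chi(G)\to\chi(G/\rho_{\mathcal C})$, only makes the ``induces $\chi$'' clause more explicit than the paper's ``by construction''.
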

\begin{proof}
Fix a coloring $\chi$ of $G/\rho_{\mathcal C}$ with alphabet $[k]$, and write $\delta'$ for the transition function of the
colored quotient automaton $\chi(G/\rho_{\mathcal C})$. For each class $C_i\in\mathcal C$, the coloring $\chi$ labels the $k$ outgoing edges of the quotient state $C_i$ bijectively by $[k]$. For every $s\in [k]$, let $C_j=\delta'(C_i,s)$, since $\rho_{\mathcal C}$ is a congruence, the underlying quotient $G/\rho_{\mathcal C}$ has the property that for each pair $(i,j)$ the number of edges from any vertex $u\in C_i$ to the block $C_j$ is constant; denote it by $k_{ij}$, so that $\sum_{j=1}^m k_{ij}=k$. Now, for each $i$ and $j$, define $L_{i,j}\ :=\ \{t\in[k]:\ \delta'(C_i,t)=C_j\}$. By construction, the sets $\{L_{i,1},\dots,L_{i,m}\}$ form a partition of $[k]$, and $|L_{i,j}|=k_{ij}$. For each vertex $u\in C_i$, we label its outgoing edges as follows: for every $j$, choose an arbitrary bijection between the set of edges
$\delta_{C_j}(u)=\{e\in E: s(e)=u,\ t(e)\in C_j\}$ (which has size $k_{ij}$) and the label set $L_{i,j}$, and assign labels accordingly.
Doing this for all $u$ yields a valid coloring $\widetilde{\chi}$ of $G$. It remains to check that $\rho_{\mathcal C}$ is a congruence for the automaton $\widetilde{\chi}(G)=(V, [k], \delta)$. Let $u,v\in C_i$ and let $s\in [k]$. By definition, there exists a unique $j$ such that $s\in L_{i,j}$; hence the
$s$-labeled edge leaving $u$ (resp.\ $v$) was chosen among the edges from $u$ to $C_j$ (resp.\ from $v$ to $C_j$). Therefore $\delta(u,s)\in C_j$ and $\delta(v,s)\in C_j$, so $\delta(u,s)\,\rho_{\mathcal C}\,\delta(v,s)$, that is, $\rho_{\mathcal C}\in \mathrm{Cong}(\widetilde{\chi}(G))$, and by construction the induced quotient coloring is $\chi$.
\end{proof}

\begin{lemma}\label{lem: lump=congr}
Let $G=(V,E,s,t)$ be a finite $k$-out directed graph, and let $\rho_{\mathcal C}$ be an equivalence relation on $V$ with classes $\mathcal C=\{C_1,\dots,C_m\}$. Then $\mathcal C$ is lumpable if and only if there exists a coloring $\chi$ such that $\rho_{\mathcal{C}}\in \mathrm{Cong}(\chi(G))$.
\end{lemma}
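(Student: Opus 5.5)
The plan is to prove the two implications separately. The forward implication is essentially Lemma~\ref{lem:lifting-coloring}; the reverse implication is a direct counting argument.

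\emph{($\Rightarrow$)} Assume $\mathcal C$ is lumpable. By the discussion following the definition of lumpability, $\rho_{\mathcal C}$ is a congruence of the $k$-out graph $G$, so $G/\rho_{\mathcal C}$ is again $k$-out. A $k$-out graph always admits an automaton coloring: at each vertex, label its $k$ outgoing edges bijectively by $[k]$. So fix any coloring $\chi$ of $G/\rho_{\mathcal C}$. Lemma~\ref{lem:lifting-coloring} then gives a lift $\widetilde\chi$ of $\chi$ to $G$ with $\rho_{\mathcal C}\in\Cong(\widetilde\chi(G))$, which is exactly what is required.

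\emph{($\Leftarrow$)} Assume $\chi$ is a coloring with $\rho_{\mathcal C}\in\Cong(\chi(G))$, and write $\delta_\chi$ for its transition function. Fix a block $C_i$ and two vertices $u,u'\in C_i$. For each letter $a\in[k]$, the congruence property gives $\delta_\chi(u,a)\mathrel{\rho_{\mathcal C}}\delta_\chi(u',a)$. Hence for every target block $C_j$,
\[
\{a\in[k]:\delta_\chi(u,a)\in C_j\}=\{a\in[k]:\delta_\chi(u',a)\in C_j\}.
\]
Since $\chi$ restricted to the outgoing star of $u$ is a bijection onto $[k]$, the size of the left-hand set is $|\delta_u(C_j)|$. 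Likewise, the size of the right-hand set is $|\delta_{u'}(C_j)|$. Therefore $|\delta_u(C_j)|=|\delta_{u'}(C_j)|$ for all $i,j$ and all $u,u'\in C_i$, which is the definition of lumpability.

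I do not expect a genuine obstacle. The only points needing care are the existence of a coloring of the quotient, which is trivial, and the identification of edge counts with label counts via the bijectivity of $\chi$ on each outgoing star. This identification remains valid in the presence of parallel edges, because labels are attached to edges rather than to targets.
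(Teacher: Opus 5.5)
Your proposal is correct and follows essentially the same route as the paper: the forward direction colors the $k$-out quotient arbitrarily and lifts via Lemma~\ref{lem:lifting-coloring}, and the reverse direction compares, for $u,u'$ in the same block, the sets of letters sending them into each block $C_j$. Your explicit remark that these letter sets have size $|\delta_u(C_j)|$ because $\chi$ is bijective on each outgoing star, even with parallel edges, is a useful clarification.
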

\begin{proof}
Assume there exists a valid coloring $\chi$ such that $\rho_{\mathcal{C}}$ is a congruence on the resulting automaton $\chi(G)$. Let $u,v\in C_i$. Since $\rho_{\mathcal{C}}$ is a congruence, for every color $s\in [k]$ we have $\delta(u,s)\rho_{\mathcal{C}} \delta(v,s)$. In particular, for any block $C_j$ the set of colors $s$ for which $\delta(u,s)\in C_j$
coincides with the set of colors for which $\delta(v,s)\in C_j$. Hence the number of edges from $u$ to $C_j$ equals the number of edges
from $v$ to $C_j$, and since $i,j$ were arbitrary, $\mathcal C$ is lumpable.
Conversely, assume that $\mathcal C$ is lumpable. Then the quotient multigraph $G/\rho_{\mathcal C}$ is well-defined and $k$-out multigraph.
Choose any valid coloring $\chi$ of the quotient $G/\rho_{\mathcal C}$. By Lemma~\ref{lem:lifting-coloring}, $\chi$ lifts to a valid coloring $\chi$ of $G$ such that $\rho_{\mathcal C}\in\mathrm{Cong}(\chi(G))$. This completes the proof.
\end{proof}
As an immediate consequence of the previous Lemma~\ref{lem: lump=congr} we have. 
\begin{theorem}\label{theo: lump=to simple}
Let $G$ be a $k$-out directed graph. Then $G$ is not lumpable if and only if $G$ is totally simple. 
\end{theorem}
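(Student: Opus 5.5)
The plan is to derive Theorem~\ref{theo: lump=to simple} directly from Lemma~\ref{lem: lump=congr}. Here ``$G$ is not lumpable'' means that $G$ admits no nontrivial lumpable partition, that is, $\Cong(G)=\{\Delta_G,\nabla_G\}$. Both the discrete and the universal partitions are lumpable for any $k$-out graph, and they are congruences of every automaton $\chi(G)$, so only nontrivial partitions matter. I will prove the two implications by contraposition.

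First, suppose $G$ is not totally simple. Then some coloring $\chi\in\mathcal C(G)$ has $\chi(G)$ non-simple, so there is an automaton congruence $\rho\in\Cong(\chi(G))$ different from the discrete and universal relations. By the forward direction of Lemma~\ref{lem: lump=congr}, the partition of $V(G)$ into $\rho$-classes is lumpable. It is nontrivial, so $G$ is lumpable.

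Conversely, suppose $G$ is lumpable, with a nontrivial lumpable partition $\mathcal C$. By the reverse direction of Lemma~\ref{lem: lump=congr}, which in turn rests on the lifting of Lemma~\ref{lem:lifting-coloring}, there is a coloring $\chi$ with $\rho_{\mathcal C}\in\Cong(\chi(G))$. Since $\rho_{\mathcal C}$ is neither $\Delta_G$ nor $\nabla_G$, the automaton $\chi(G)$ is not simple, so $G$ is not totally simple.

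No step here is a real obstacle, since the content sits in the earlier lemmas. The only care needed is bookkeeping on triviality. The congruence produced in each direction is the same equivalence relation on $V(G)$, so nontriviality transfers verbatim between graph congruences and automaton congruences. I would also note the degenerate case $|V(G)|\le 2$, where no nontrivial partitions exist, so both sides hold vacuously.
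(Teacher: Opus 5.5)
Your proof is correct and follows the paper's route exactly. The paper states the theorem as an immediate consequence of Lemma~\ref{lem: lump=congr}; you spell out both contrapositive implications from that lemma and correctly note that the same equivalence relation is nontrivial as a graph congruence and as an automaton congruence.
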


Interestingly, the $2$-out graphs underlying the classical \v{C}ern\'y series (whose colorings are well known to be synchronizing) are also totally simple; Fig.~\ref{fig:cerny-underlying} depicts the case with $6$ vertices.
\begin{proposition}\label{prop:cerny-underlying-totally-simple}
For every $n\ge 2$, the $2$-out graph underlying the \v{C}ern\'y automaton $C_n$ (i.e.\ the graph in which
$0$ has two parallel edges to $1$, every vertex $i\neq 0$ has a self-loop, and every vertex has a successor edge
$i\to i+1$ modulo $n$) is totally simple.
\end{proposition}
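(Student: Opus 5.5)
By Theorem~\ref{theo: lump=to simple}, it suffices to show that the underlying graph $G_n$ admits no nontrivial lumpable partition. So fix a lumpable partition $\mathcal C$ of $V=\{0,1,\dots,n-1\}$ that is different from the discrete partition, and aim to prove that $\mathcal C=\{V\}$. The case $n=2$ is immediate, since the only partitions are trivial, so assume $n\ge 3$.

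The plan is to compare out-neighbourhood profiles across the blocks of $\mathcal C$. For $i\neq 0$ the two out-edges of $i$ go to $i$ (the loop) and to $i+1$. The vertex $0$ sends both edges to $1$.

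\textbf{Step 1.} Let $C$ be the block containing $1$. Suppose $0\notin C$ and $C$ contains some other vertex $j\neq 1$. Then $|\delta_1(C)|\ge 1$ because of the loop at $1$, and the same holds for $j$. Both $1$ and $j$ have exactly one loop into $C$, so the successor edges must behave alike: $2\in C$ if and only if $j+1\in C$. Now let $D$ be the block containing $0$. The vertex $0$ has $|\delta_0(\text{block of }1)|=2$. Any $x\in D$ with $x\neq 0$ has one edge into $D$ through its loop. If $1\notin D$, lumpability forces $|\delta_x(D)|=0$, which is impossible. Hence either $D=\{0\}$ or $1\in D$. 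In the latter case $|\delta_0(D)|=2$, so every $x\in D$ must have $x+1\in D$. Closing under successors, $D$ contains $0,1,2,\dots$, hence $D=V$, which is the universal partition.

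\textbf{Step 2.} It remains to handle $D=\{0\}$. Take a nontrivial block $C$ not containing $0$, and let $j$ be its largest element in the cyclic order $1,\dots,n-1$. Every $x\in C$ has its loop landing in $C$. If some $x\in C$ has $x+1\in C$, then all elements must have $x+1\in C$. This fails at $j$, because $j+1\notin C$, or $j=n-1$ and $j+1=0\notin C$. So no two consecutive vertices lie in $C$, and all successors $x+1$, for $x\in C$, lie in one common block $C'$.

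Since $0\in\{0\}$ is a singleton block, $C$ contains $n-1$ if and only if $C'=\{0\}$. But $C'=\{0\}$ would force every $x\in C$ to satisfy $x+1=0$, so $|C|=1$. Hence $n-1\notin C$, and $C'$ is a block not containing $0$ with $|C'|\ge |C|\ge 2$, since the map $x\mapsto x+1$ is injective. Iterating this shift, $C, C+1, C+2,\dots$ are all non-singleton blocks avoiding $0$. However, some translate $C+t$ contains $n-1$ before any reaches $0$, which is a contradiction. Thus every block other than $\{0\}$ is a singleton, and $\mathcal C$ is discrete.

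The expected obstacle is the bookkeeping in Step 2, in particular the loop/successor coincidence at vertex $0$. The argument must confirm that parallel edges $0\to1$ do not create an exceptional lumping, for instance $\{0\}$ with $\{1,\dots,n-1\}$. That partition indeed fails: $n-1$ has only one edge into $\{1,\dots,n-1\}$, while $1$ has two. I would verify this edge case explicitly.
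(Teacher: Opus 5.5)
Your proof is correct and follows essentially the paper's route. First you show that the block of $0$ is either $\{0\}$ or all of $V$; your Step~1 is the paper's first two paragraphs, with closure under successors replacing the paper's appeal to strong connectivity of the quotient. Then you propagate along the successor cycle, and the only difference is direction: the paper propagates singletons backward from $\{0\}$ (only $i$ and $i+1$ send edges into $\{i+1\}$), while you push a non-singleton block forward until it would have to contain $n-1$, which is the contrapositive of the same step. Note that each translate $C+s$ is only contained in a block, not necessarily equal to one, but this does not affect your argument.
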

\begin{proof}
Let $\rho$ be a lumpable partition (equivalently, a congruence) of $G$.
We show that $\rho$ is either discrete or universal. First, $0$ and $1$ cannot belong to the same $\rho$-class.
Indeed, if $0\,\rho\,1$, then in the quotient $G/\rho$ the class $[0]_\rho=[1]_\rho$ has two loops. Since $G/\rho$ is a $2$-out graph, having two loops at a vertex means that this vertex has no outgoing edges to any other class.
As $G/\rho$ is strongly connected (because $G$ is strongly connected and $\rho$ is lumpable), it follows that $G/\rho$ has only one
vertex, hence $\rho=\nabla$. Next, if $0\,\rho\,i$ for some $i\notin\{0,1\}$, then $[0]_\rho$ contains a vertex with a self-loop, so $G/\rho$ has a loop at $[0]_\rho$.
By lumpability, every vertex in $[0]_\rho$ must have the same number of outgoing edges into $[0]_\rho$; in particular, it must be at least $1$.
But $0$ has no self-loop and both its outgoing edges go to $1$, so the only way for $0$ to have an edge into $[0]_\rho$ is that $1\in[0]_\rho$,
which brings us back to the previous paragraph and forces $\rho=\nabla$.
Therefore, if $\rho$ is non-universal then $[0]_\rho=\{0\}$. Assume now that $\rho$ is non-universal, so $[0]_\rho=\{0\}$.
Since $n-1$ has the successor edge $(n-1)\to 0$, in the quotient there is an edge $[n-1]_\rho\to[0]_\rho=\{0\}$.
Hence every vertex $j\in[n-1]_\rho$ must have exactly one outgoing edge into $\{0\}$. However, the only vertex that has an edge to $0$ is $n-1$ itself; thus $[n-1]_\rho=\{n-1\}$. Repeating the same argument backwards along the cycle, for $i=n-2,n-3,\dots,1$ the quotient has an edge $[i]_\rho\to[i+1]_\rho=\{i+1\}$, so every vertex in $[i]_\rho$ must have exactly one outgoing edge into $\{i+1\}$; but the only vertex with an edge to $i+1$ (different to $i+1$) is $i$. Therefore $[i]_\rho=\{i\}$ for all $i$, and thus $\rho=\Delta$.
\end{proof}
\begin{figure}[t]
  \centering
  \begin{tikzpicture}[
    >=Stealth,
    state/.style={circle,draw,minimum size=18pt,inner sep=1pt},
    every edge/.style={draw,->,line width=0.6pt}
  ]
  \def\n{6} 
  \pgfmathtruncatemacro{\last}{\n-1}

  \foreach \i in {0,...,\last}{
    \node[state] (q\i) at ({360/\n * \i}:2.2cm) {$\i$};
  }

  \pgfmathtruncatemacro{\prelast}{\n-2}
  \foreach \i in {1,...,\prelast}{
    \pgfmathtruncatemacro{\j}{\i+1}
    \path (q\i) edge (q\j);
  }
  \path (q\last) edge (q0);

  \foreach \i in {1,...,\last}{
    \path (q\i) edge[loop above] (q\i);
  }

  \path (q0) edge[bend left=18] (q1);
  \path (q0) edge[bend right=18] (q1);

  \end{tikzpicture}
  \caption{Underlying 2-out graph of the Cerny's automaton.}
  \label{fig:cerny-underlying}
\end{figure}
It would be interesting to quantify, for fixed parameters $n$ and $k$, the probability that a uniformly random $k$-out graph on
$n$ vertices admits a nontrivial lumpable partition, i.e.\ fails to be totally simple.
For fixed $k\ge 2$, such a uniformly random $k$-out digraph is typically not strongly connected.
Indeed, in our model each vertex chooses independently $k$ outgoing edges with replacement, and the head of each edge is uniform in $V$.
Hence, for a fixed vertex $v$ one has
\[
\Pr(\delta_{V}(v)=\emptyset)=\Bigl(1-\frac{1}{n}\Bigr)^{kn}\sim e^{-k},
\]
so the expected number of vertices with in-degree $0$ is $\sim n e^{-k}$, and therefore $G$ fails to be strongly connected with high
probability as $n\to\infty$. Moreover, if a $k$-out digraph $G$ is not strongly connected, then it is automatically nontrivially lumpable. Therefore, conditioning on strong connectivity is natural when studying typical total simplicity. We formulate the following conjecture.

\begin{conjecture}\label{conj:typical-total-simplicity}
Fix $k\ge 2$. Let $G$ be a uniformly random $k$-out graph on $n$ vertices, and write $\mathrm{SCC}$ for the event that $G$
is strongly connected. Then
\[
\Pr(\text{$G$ is totally simple}\mid \mathrm{SCC}) \xrightarrow[n\to\infty]{} 1.
\]
Moreover, for fixed $n$ the function $k \longmapsto \Pr(\text{$G$ is totally simple}\mid \mathrm{SCC})$ is nondecreasing. More quantitatively, there exist constants $c_k>0$ and $\beta_k\in\mathbb R$ such that
\[
\Pr(\text{$G$ is totally simple}\mid \mathrm{SCC})
\;=\;
1-\exp(-c_k n+\beta_k)+o(1)
\qquad (n\to\infty),
\]
with $c_k$ increasing as $k$ increases.
\end{conjecture}

We generated uniformly random $k$-out graphs, conditioned on strong connectivity, and tested whether they admit any nontrivial lumpable partition. Table~\ref{tab:totally-simple-experiments-agg} reports a representative subset of the resulting empirical proportions of totally simple instances, together with Wilson 95\% confidence intervals; the remaining experiments (not shown) exhibit the same qualitative behaviour and are consistent with the figures in the table.
Across the tested parameters, the observed probability of total simplicity increases with $n$ for fixed $k$, and increases
with $k$ for fixed $n$. Guided by numerical fits for $k=3,4$ on the tested range, we propose the following simple rational-valued ansatz:
\[
c_k \approx \frac{2^{k-2}}{25},
\qquad
\beta_k \approx \frac{3^{k-2}}{6}.
\]
In particular, $c_k$ approximately doubles when $k$ increases by one, while $\beta_k$ grows faster (roughly by a factor of $3$).
For $k=2$ the empirical estimates increase much more slowly on the tested range, and we therefore refrain from drawing
quantitative conclusions from an exponential fit in that case.
\begin{table}[t]
\centering
\begin{tabular}{|r|r|r|r|l|}
\hline
$n$ & $k$ & samples & totally simple & $\hat p$ (Wilson 95\% CI) \\
\hline
8  & 2 & 2000 & 691 & $0.346\;[0.325,\,0.367]$ \\
9  & 2 & 2000 & 722 & $0.361\;[0.340,\,0.382]$ \\
10 & 2 & 2000 & 692 & $0.346\;[0.325,\,0.367]$ \\
\hline
8  & 3 & 2000 & 1105 & $0.553\;[0.531,\,0.574]$ \\
9  & 3 & 2000 & 1262 & $0.631\;[0.610,\,0.652]$ \\
10 & 3 & 2000 & 1289 & $0.644\;[0.623,\,0.665]$ \\
\hline
8  & 4 & 2000 & 1453 & $0.727\;[0.707,\,0.746]$ \\
9  & 4 & 2000 & 1578 & $0.789\;[0.771,\,0.806]$ \\
10 & 4 & 2000 & 1659 & $0.830\;[0.812,\,0.845]$ \\
\hline
\end{tabular}
\caption{Empirical estimates of $\Pr(G\ \text{is totally simple}\mid \mathrm{SCC})$ for random $k$-out graphs,
conditioned on strong connectivity. Confidence intervals are Wilson 95\%, aggregated over two independent batches of $1000$ trials each.}
\label{tab:totally-simple-experiments-agg}
\end{table}

\subsection{Totally synchronizing and totally simple}
We now compare total simplicity and total synchronization.
By Theorem~\ref{theo: lump=to simple}, total simplicity is equivalent to the absence of nontrivial lumpable partitions.

First, total simplicity restricts endomorphisms: every endomorphism is an automorphism, and its vertex-orbit partition is either discrete or trivial.
Hence, if $|V(G)|$ is composite, then $\End(G)=\End_E(G)$.
Second, we prove a slight extension of a Perron--Frobenius criterion of Friedman and Gusev--Pribavkina: a strongly connected non-lumpable graph whose integer-normalized left Perron--Frobenius eigenvector admits at most one nontrivial equipartition is totally synchronizing.

\begin{remark}\label{rem:totally-simple-strongly-connected}
Every totally simple $k$-out graph is strongly connected. Indeed, if $G$ is not strongly connected,
let $C\subsetneq V(G)$ be a sink strongly connected component. For every coloring $\chi$ of $G$,
the set $C$ is closed under all letters. Hence the equivalence relation whose only non-singleton
class is $C$ is a nontrivial congruence of $\chi(G)$. Thus $\chi(G)$ is not simple, so $G$ is not
totally simple.
\end{remark}

\begin{proposition}\label{prop:totally-simple-endomorphisms}
Let $G$ be a totally simple $k$-out graph. Then \(\End(G)=\Aut(G).\) Moreover, if $\varphi\in\End(G)$, then exactly one of the following holds:
\begin{itemize}
\item $\varphi$ fixes every vertex;
\item $\varphi$ acts transitively on $V(G)$.
\end{itemize}
In particular, if $|V(G)|$ is composite, then $\End(G)=\End_E(G)$.
If, moreover, $G$ is parallel-free, then $\End(G)=\{\id\}$.
\end{proposition}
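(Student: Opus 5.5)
We will use Remark~\ref{rem:totally-simple-strongly-connected} to get strong connectivity of $G$, and then Proposition~\ref{prop:strongly-connected-endo-auto} gives $\End(G)=\Aut(G)$ immediately.

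For the dichotomy, fix $\varphi\in\Aut(G)$ and let $\rho_\varphi$ be the partition of $V(G)$ into orbits of $\langle\varphi\rangle$. We claim that $\rho_\varphi$ is lumpable. Let $C,D$ be orbits and $x\in C$. Since $\varphi$ is a local out-isomorphism, $\varphi_e$ maps $\delta_x(D)$ bijectively onto $\delta_{\varphi(x)}(\varphi(D))=\delta_{\varphi(x)}(D)$, because $\varphi(D)=D$. Hence $|\delta_x(D)|=|\delta_{\varphi(x)}(D)|$. Iterating along the orbit, $|\delta_x(D)|$ is constant on $C$. By Theorem~\ref{theo: lump=to simple}, total simplicity means there is no nontrivial lumpable partition. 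So $\rho_\varphi$ is either discrete or universal. Discrete means $\varphi$ fixes every vertex; universal means $\langle\varphi\rangle$ has a single orbit, i.e.\ $\varphi$ acts transitively. The two cases are mutually exclusive as soon as $|V(G)|\ge2$. If $|V(G)|=1$, both statements hold and the ``exactly one'' must be read with this trivial case excluded; we note this as a degenerate case.

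For the composite case, suppose $\varphi$ acts transitively. Then $\varphi_v$ is an $n$-cycle with $n=|V(G)|$. Write $n=ab$ with $1<a,b<n$. The orbits of $\varphi^a$ are $a$ sets of size $b$, so they form a nontrivial partition. By the same argument, applied to $\varphi^a\in\Aut(G)$, this partition is lumpable, which contradicts total simplicity. Hence every endomorphism fixes all vertices, i.e.\ $\End(G)=\End_E(G)$.

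Now assume $G$ is parallel-free. If $\varphi$ fixes all vertices, then each edge is determined by its endpoints, so $\varphi_e=\id$. It remains to exclude the transitive case when $n$ is prime, since the composite case was handled above. This is the main obstacle. Suppose $\varphi$ is an $n$-cycle. It is then semiregular, since its powers are either the identity or fixed-point-free, and hence vertex-rigid by Proposition~\ref{prop:semiregular-vertex-rigid}. There is no immediate contradiction from simplicity alone: a directed $n$-cycle with $k=1$ is parallel-free, non-lumpable for prime $n$, and has the rotation as an automorphism. Our plan is therefore to check whether the intended statement requires extra hypotheses, such as $k\ge2$ together with an argument that circulant graphs admit a nontrivial lumping. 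For a circulant, the partition $\{\{x\},V\setminus\{x\}\}$ is lumpable exactly when every vertex sends the same number of edges to $x$. This fails for a general circulant, but we will first test it. If it fails, we will restrict the final claim to the composite case, where it follows directly from the argument above.
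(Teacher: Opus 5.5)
Your proof is the paper's proof. You get strong connectivity from Remark~\ref{rem:totally-simple-strongly-connected} and Proposition~\ref{prop:strongly-connected-endo-auto}, the dichotomy from lumpability of the $\langle\varphi\rangle$-orbit partition, and the composite case from the $a$ orbits of $\varphi^a$ when $n=ab$. The paper obtains $\End(G)=\{\id\}$ by combining $\End_E(G)=\{\id\}$ with $\End(G)=\End_E(G)$, so it too proves the last sentence only for composite $|V(G)|$. There ``moreover'' means composite \emph{and} parallel-free, which is exactly your fallback.

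Your doubt about the prime case is justified even for $k\ge 2$, so you can stop testing circulants. Take the $2$-out graph on $\mathbb Z_5$ with edges $x\to x+1$ and $x\to x+2$; it is parallel-free and loop-free, and $x\mapsto x+1$ is a nontrivial automorphism. It nevertheless has no nontrivial lumpable partition, since rotations preserve lumpability and so, up to rotation, such a partition falls into one of two cases. In the first case it has $\{0\}$ as a block. This forces the in-neighbours $3,4$ of $0$ into blocks inside $\{3,4\}$, hence into $\{3\},\{4\}$, because $3$ has an edge into $\{3,4\}$ and $4$ has none. Then the in-neighbour $2$ of $4$ goes into $\{2\}$, and the partition is discrete. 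In the second case it has a block $\{0,1\}$ or $\{0,2\}$, which fails because $0$ has an edge into it and $1$, resp.\ $2$, has none.
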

\begin{proof}
By Remark~\ref{rem:totally-simple-strongly-connected}, the graph $G$ is strongly connected.
Hence, by Proposition~\ref{prop:strongly-connected-endo-auto}, every endomorphism of $G$ is an automorphism. Let $\varphi\in\End(G)=\Aut(G)$, and let $\mathcal O(\varphi)$ be the partition of $V(G)$ into
vertex-orbits of $\varphi$. We show that $\mathcal O(\varphi)$ is lumpable. Let $O_i,O_j\in
\mathcal O(\varphi)$ and let $u,u'\in O_i$. Then $u'=\varphi^r(u)$ for some $r\ge0$. Since
$\varphi$ is a $k$-out automorphism and preserves $O_j$ setwise, for every $x\in V(G)$ one has
$|\delta_x(O_j)|=|\delta_{\varphi(x)}(O_j)|$. Iterating gives
\[
|\delta_u(O_j)|=|\delta_{\varphi^r(u)}(O_j)|=|\delta_{u'}(O_j)|.
\]
Thus $\mathcal O(\varphi)$ is lumpable. Since $G$ is totally simple, $\mathcal O(\varphi)$ is either the discrete partition or the trivial
partition $\{V(G)\}$. In the first case, $\varphi$ fixes every vertex. In the second case,
$\varphi$ acts transitively on $V(G)$.
\\
Assume now that $n:=|V(G)|$ is composite. If some $\varphi\in\End(G)$ acts transitively on $V(G)$,
then $\varphi_v$ is an $n$-cycle. Choose a divisor $d$ of $n$ with $1<d<n$. Then $\varphi^d$ is a
nontrivial automorphism, and the vertex-orbits of $\varphi^d$ are exactly $d$ orbits of cardinality
$n/d$. Hence this orbit partition is strictly between the discrete partition and $\{V(G)\}$.
By the first part of the proof, this partition is lumpable, contradicting total simplicity.
Therefore no endomorphism acts transitively on $V(G)$, and every endomorphism fixes every vertex.
Thus $\End(G)=\End_E(G)$.

If, moreover, $G$ is parallel-free, then every endomorphism fixing all vertices fixes every edge,
since there is at most one edge with any prescribed source and target. Hence $\End_E(G)=\{\id\}$,
and therefore $\End(G)=\{\id\}$.
\end{proof}
\begin{proposition}\label{prop:unique-equipartition}
Let $G$ be a finite strongly connected $k$-out directed graph which admits at least one non-synchronizing coloring.
Let $\pi\in\mathbb N^{V(G)}_{>0}$ be the integer-normalized left Perron--Frobenius eigenvector of $G$.
Fix a non-synchronizing coloring $\chi$ of $G$, and let
$\rho=\{B_1,\dots,B_m\}$ be the partition of $V(G)$ into maximal synchronizing subsets of the automaton $\chi(G)$.
Then the following are equivalent:
\begin{enumerate}
\item\label{it:1}
$\rho\in\mathrm{Cong}(\chi(G))$.
\item\label{it:2}
$\rho$ is the unique nontrivial equipartition of $\pi$ formed by maximal synchronizing subsets of $\chi(G)$.
\end{enumerate}
Moreover, if these conditions hold, then $\rho$ is lumpable for $G$ and the quotient $G/\rho$ is Eulerian.
If, in addition, $\rho$ is the unique nontrivial equipartition of $\pi$, then $|V(G/\rho)|=m$ is prime.
\end{proposition}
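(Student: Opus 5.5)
The plan is to begin by pinning down the partition $\rho$. By Theorem~\ref{thm:friedman-weight}, applied to $\chi$, there is a partition $\{S_1,\dots,S_m\}$ into synchronizable subsets of equal, maximal $\pi$-weight $N$. It is the kernel of some word $w$, so $S_i=\{q: q\cdot w=p_i\}$ with $Q\cdot w=\{p_1,\dots,p_m\}$. Since $\chi$ is non-synchronizing, $m\ge 2$. Maximal synchronizing subsets have weight $N$: Friedman's argument shows that every synchronizable set has weight at most $N$. I will read ``the partition into maximal synchronizing subsets'' as this Friedman partition, i.e.\ the blocks are maximal synchronizing sets of weight $N$ that tile $V(G)$. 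In particular, $\rho$ is always a nontrivial equipartition of $\pi$ whose blocks are maximal synchronizing subsets.

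For (\ref{it:1})$\Rightarrow$(\ref{it:2}), let $\rho'=\{B'_1,\dots,B'_{m'}\}$ be another equipartition of $\pi$ into maximal synchronizing subsets. Each $B'_j$ has weight $N$, so $m'=m$. I intend to show that each $B'_j$ lies inside a single $\rho$-block. Suppose instead that $B'_j$ meets two blocks $B_a\neq B_b$. Let $u$ synchronize $B'_j$, so $B'_j\cdot u=\{q\}$. Because $\rho$ is a congruence, $B_a\cdot u$ and $B_b\cdot u$ each lie inside one block, and both of those blocks contain $q$; hence $B_a\cdot u$ and $B_b\cdot u$ lie in the same block $B_c$. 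Then $B_a\cup B_b$ maps by $u$ into a synchronizable set, so $B_a\cup B_b$ is synchronizable. Its weight is $2N>N$, contradicting maximality. Therefore each $B'_j\subseteq B_{c(j)}$, and equal weights force $B'_j=B_{c(j)}$, so $\rho'=\rho$.

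For (\ref{it:2})$\Rightarrow$(\ref{it:1}), I use translates. Take $a\in[k]$ and the word $w$ above. The partition $\rho_a$ given by the kernel of $aw$ consists of the sets $\{q: q\cdot a\in B_i\}$. These are again synchronizable, their weights are equal by the eigenvector identity $\sum_{q\cdot a\in S}\pi(q)\ge$ (the Friedman weight-preservation step), and they are maximal. Making this weight comparison rigorous is the main obstacle. I would use that $\sum_a \mathrm{Wg}(S\cdot a^{-1})=k\,\mathrm{Wg}(S)$ while each preimage is synchronizable and hence has weight at most $N$, which forces equality $\mathrm{Wg}(B_i\cdot a^{-1})=N$ for every letter. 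So each preimage partition $\{B_i\cdot a^{-1}\}$ is an equipartition into maximal synchronizing subsets. By uniqueness it equals $\rho$. Consequently preimages of blocks are unions of blocks, in fact single blocks, and this is exactly the statement that $\rho$ is a congruence.

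For the moreover part, $\rho\in\mathrm{Cong}(\chi(G))$ gives lumpability by Lemma~\ref{lem: lump=congr}. The preimage computation shows that each block $B_i$ has, under each letter $a$, preimage equal to exactly one block. Hence in $G/\rho$ each vertex receives exactly one edge per letter, so its in-degree is $k$, and $G/\rho$ is Eulerian. By Lemma~\ref{lem:pf-lump-part}, the block sums of $\pi$ are proportional to the PF vector of $G/\rho$, which is $\mathbf 1$; this is consistent. Now suppose $m$ is composite, $m=ab$ with $1<a<m$. Grouping the $\rho$-blocks $b$ at a time gives another nontrivial equipartition of $\pi$, distinct from $\rho$, which contradicts the assumed uniqueness among all nontrivial equipartitions. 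Since $m\ge 2$, $m$ is therefore prime.
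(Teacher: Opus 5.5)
Your proof is correct, but it takes a different route from the paper. The paper does not prove the equivalence of (1) and (2) itself; it quotes it as \cite[Lemma~12]{GuPri}. It then derives Eulerianity linear-algebraically from Lemma~\ref{lem:pf-lump-part}: the block sums of $\pi$ are constant, so the Perron--Frobenius vector of $G/\rho$ is a multiple of $\mathbf 1$, which means every quotient vertex has in-degree $k$.

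You instead give a self-contained proof resting on the identity $\sum_{a}\mathrm{Wg}(S\cdot a^{-1})=k\,\mathrm{Wg}(S)$, which is just $\pi^\top A=k\pi^\top$, together with the bound $N$ on the weight of synchronizable sets. In (2)$\Rightarrow$(1), this forces every letter-preimage of $\rho$ to be again a partition into synchronizing sets of weight $N$, hence maximal ones. Uniqueness then makes each letter permute the blocks. Your (1)$\Rightarrow$(2) is a clean merging argument. Your route is independent of the external lemma and gives a slightly sharper conclusion: $\chi(G)/\rho$ is a permutation automaton, from which Eulerianity of $G/\rho$ follows combinatorially. The paper's route is shorter and reuses an existing lemma. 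The primality step is the same in both.

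Two minor presentation points. First, the sentence invoking ``the eigenvector identity $\ge$'' is garbled and should be replaced by the sum identity you state next. Second, the claim that every inclusion-maximal synchronizing set has weight $N$ does not follow from the upper bound alone; it needs, for example, the fact that all kernel classes of $uw$ have weight $N$. You never actually need this claim. In (1)$\Rightarrow$(2), once $B'_j\subseteq B_{c(j)}$, inclusion-maximality of $B'_j$ already gives equality. In (2)$\Rightarrow$(1), uniqueness identifies $\rho$ with Friedman's partition, whose blocks have weight $N$.
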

\begin{proof}
The equivalence between \ref{it:1} and \ref{it:2} is exactly \cite[Lemma~12]{GuPri}, applied to the non-synchronizing automaton $\chi(G)$.
Assume that these equivalent conditions hold. Since $\rho\in\mathrm{Cong}(\chi(G))$, Lemma~\ref{lem: lump=congr} implies that $\rho$ is lumpable for the underlying graph $G$. Hence the quotient $G/\rho$ is a well-defined $k$-out graph.
Let $\pi'$ be the integer-normalized left Perron--Frobenius eigenvector of $G/\rho$. By Lemma~\ref{lem:pf-lump-part}, $\pi'$ is proportional to the block-sums $\sum_{x\in B_i}\pi(x)$. Since $\rho$ is an equipartition of $\pi$, these sums are constant, hence $\pi'=w\mathbf 1$ for some $w>0$. For a $k$-out digraph, a constant positive left eigenvector for eigenvalue $k$ is equivalent to being Eulerian, since $(\mathbf 1^\top A')_j$ is the in-degree of the $j$-th quotient vertex. Thus $G/\rho$ is Eulerian. Assume finally that $\rho$ is the unique nontrivial equipartition of $\pi$. We prove that $m$ is prime. If $m=|V(G/\rho)|$ were composite, say $m=ab$ with $1<a,b<m$, then the vertex set of $G/\rho$ could be partitioned into $a$ subsets of size $b$. Pulling this partition back along the quotient map gives an equipartition of $\pi$ strictly coarser than $\rho$, because all $\rho$-blocks have the same $\pi$-weight. This contradicts uniqueness. Hence $m$ is prime.
\end{proof}

\begin{figure}[t]
\centering

\begin{subfigure}[t]{0.49\textwidth}
\centering
\begin{tikzpicture}[
  scale=0.88, transform shape,
  >=Latex,
  node/.style={circle, draw, thick, minimum size=7mm, inner sep=1pt},
  aedge/.style={->, thick, shorten <=2pt, shorten >=2pt},
  bedge/.style={->, thick, shorten <=2pt, shorten >=2pt}
]
\def\R{2.5}
\node[node] (v0) at (90:\R)  {$0$};
\node[node] (v1) at (30:\R)  {$1$};
\node[node] (v2) at (-30:\R) {$2$};
\node[node] (v3) at (-90:\R) {$3$};
\node[node] (v4) at (-150:\R){$4$};
\node[node] (v5) at (150:\R) {$5$};

\draw[aedge] (v0) to[bend left=14] (v2);
\draw[bedge] (v0) to[bend left=18] (v4);

\draw[aedge] (v1) to[bend right=18] (v0);
\draw[bedge] (v1) to[bend left=18] (v3);

\draw[aedge] (v2) to[bend right=12] (v1);
\draw[bedge] (v2) to[bend left=18] (v4);

\draw[aedge] (v3) to[bend left=20] (v1);
\draw[bedge] (v3) to[bend right=20] (v2);

\draw[aedge] (v4) to[bend left=16] (v0);
\draw[bedge] (v4) to[bend left=14] (v5);

\draw[aedge] (v5) to[bend left=18] (v0);
\draw[bedge] (v5) to[bend right=18] (v3);
\end{tikzpicture}
\caption{Non-lumpable and totally synchronizing: $\pi=(9,6,7,5,8,4)$ has the unique nontrivial equipartition $\{\{0,5\},\{1,2\},\{3,4\}\}$, so Theorem~\ref{theo: t-s for unique partition} applies.}
\label{fig:nonlumpable-ts}
\end{subfigure}
\hfill
\begin{subfigure}[t]{0.49\textwidth}
\centering
\begin{tikzpicture}[
  scale=0.88, transform shape,
  >=Latex,
  node/.style={circle, draw, thick, minimum size=7mm, inner sep=1pt},
  edge/.style={->, thick, shorten <=2pt, shorten >=2pt}
]
\def\R{2.5}
\node[node] (v0) at (90:\R)  {$0$};
\node[node] (v1) at (30:\R)  {$1$};
\node[node] (v2) at (-30:\R) {$2$};
\node[node] (v3) at (-90:\R) {$3$};
\node[node] (v4) at (-150:\R){$4$};
\node[node] (v5) at (150:\R) {$5$};

\draw[edge] (v0) edge[loop above, looseness=10] (v0);
\draw[edge] (v2) edge[loop right, looseness=10] (v2);
\draw[edge] (v4) edge[loop left,  looseness=10] (v4);

\draw[edge] (v0) to[bend left=12] (v3);

\draw[edge] (v1) to[bend left=14] (v2);
\draw[edge] (v1) to[bend left=14] (v5);

\draw[edge] (v2) to[bend left=12] (v1);

\draw[edge] (v3) to[bend left=10] (v1);
\draw[edge] (v3) to[bend right=12] (v5);

\draw[edge] (v4) to[bend left=12] (v0);

\draw[edge] (v5) to[bend left=12] (v3);
\draw[edge] (v5) to[bend right=12] (v4);
\end{tikzpicture}
\caption{Eulerian, strongly connected, non-lumpable, but not totally synchronizing (found by exhaustive search).}
\label{fig:nonlumpable-eul-nts}
\end{subfigure}

\vspace{0.6em}

\begin{subfigure}[t]{0.72\textwidth}
\centering
\begin{tikzpicture}[
  scale=0.82, transform shape,
  >=Latex,
  node/.style={circle, draw, thick, minimum size=7mm, inner sep=1pt},
  aedge/.style={->, thick, blue, shorten <=2pt, shorten >=2pt},
  bedge/.style={->, thick, red,  shorten <=2pt, shorten >=2pt},
  alab/.style={font=\small, inner sep=1pt, fill=white, fill opacity=0.85, text opacity=1},
  blab/.style={font=\small, inner sep=1pt, fill=white, fill opacity=0.85, text opacity=1}
]
\def\R{2.6}
\node[node] (v0) at (90:\R)  {$0$};
\node[node] (v1) at (18:\R)  {$1$};
\node[node] (v2) at (-54:\R) {$2$};
\node[node] (v3) at (-126:\R){$3$};
\node[node] (v4) at (162:\R) {$4$};

\draw[aedge] (v0) edge[loop above, looseness=10]
  node[alab, above, yshift=2pt] {$a$} (v0);
\draw[bedge] (v0) to[bend left=10]
  node[blab, pos=0.55, sloped, above, yshift=2pt] {$b$} (v4);

\draw[aedge] (v1) to[bend right=14]
  node[alab, pos=0.55, sloped, above, yshift=2pt] {$a$} (v4);
\draw[bedge] (v1) to[bend left=18]
  node[blab, pos=0.55, sloped, above, yshift=2pt] {$b$} (v3);

\draw[aedge] (v2) to[bend left=16]
  node[alab, pos=0.55, sloped, above, yshift=2pt] {$a$} (v3);
\draw[bedge] (v2) to[bend right=18]
  node[blab, pos=0.55, sloped, above, yshift=2pt] {$b$} (v0);

\draw[aedge] (v3) to[bend left=18]
  node[alab, pos=0.55, sloped, above, yshift=2pt] {$a$} (v1);
\draw[bedge] (v3) to[bend right=14]
  node[blab, pos=0.55, sloped, above, yshift=2pt] {$b$} (v0);

\draw[aedge] (v4) to[bend left=16]
  node[alab, pos=0.5, sloped, above, yshift=2pt] {$a$} (v2);
\draw[bedge] (v4) to[bend right=18]
  node[blab, pos=0.55, sloped, above, yshift=2pt] {$b$} (v3);
\end{tikzpicture}
\caption{A non-synchronizing automaton whose underlying graph is strongly connected and non-lumpable, with
left Perron--Frobenius eigenvector $\pi=(3,1,1,2,2)$ (which admits exactly two nontrivial partitions).}
\label{fig:nonlumpable-nonEuler-nts}
\end{subfigure}

\caption{Three non-lumpable strongly connected examples: a totally synchronizing one (left), an Eulerian non-totally-synchronizing one (right), and a non-Eulerian non-totally-synchronizing one (bottom).}
\label{fig:nonlumpable-examples}
\end{figure}
\begin{theorem}\label{theo: t-s for unique partition}
Let $G$ be a strongly connected non-lumpable $k$-out graph, and let
$\pi\in\mathbb N^{V(G)}_{>0}$ be its integer-normalized left Perron--Frobenius eigenvector.
If $\pi$ admits at most one nontrivial equipartition, then $G$ is totally synchronizing.
\end{theorem}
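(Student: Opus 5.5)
The plan is to argue by contradiction. Suppose $G$ is strongly connected and non-lumpable, that $\pi$ admits at most one nontrivial equipartition, and that some coloring $\chi$ makes $\chi(G)$ non-synchronizing. Since $G$ is strongly connected, $\chi(G)$ is strongly connected, so Theorem~\ref{thm:friedman-weight} gives a partition $\rho=\{B_1,\dots,B_m\}$ of $V(G)$ into synchronizing subsets of equal, maximal $\pi$-weight. Moreover, $\rho$ is the kernel of some word $w$. Because $\chi(G)$ is not synchronizing, we have $m\ge 2$. Hence $\rho$ is a nontrivial equipartition of $\pi$, and by hypothesis it is the \emph{only} nontrivial equipartition of $\pi$.

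The key step is to show that $\rho$ is a congruence of $\chi(G)$. I would use Proposition~\ref{prop:unique-equipartition}: it suffices to verify its condition (2), namely that $\rho$ is the unique nontrivial equipartition of $\pi$ formed by maximal synchronizing subsets. This holds immediately, since $\pi$ has only one nontrivial equipartition at all.

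One point to check is that the blocks of $\rho$ are indeed the maximal synchronizing subsets required in Proposition~\ref{prop:unique-equipartition}. The blocks have maximal weight among synchronizing sets, so they are inclusion-maximal, and they partition $V(G)$ as required. If one prefers a direct argument avoiding \cite[Lemma~12]{GuPri}, one can proceed as follows. For any letter $a$, the partition into preimages $\{B\cdot w^{-1}\ldots\}$ gives other kernel-type equipartitions. Concretely, the sets $\{q : q\cdot a \in B_i\}$, intersected appropriately, give the kernel of $aw$, which is again a partition into maximal-weight synchronizing sets (again by the weight argument of Friedman). By uniqueness, the kernel of $aw$ equals $\rho=\ker w$. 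Hence $p\,\rho\,q$ implies $p\cdot aw=q\cdot aw$, that is, $(p\cdot a)\,\rho\,(q\cdot a)$. I expect this step to be the main obstacle: one has to ensure that $\ker(aw)$ really has blocks of the same maximal weight. This follows from $\pi$ being a left eigenvector, via the standard averaging $\sum_a \mathrm{Wg}(S\cdot a^{-1})=k\,\mathrm{Wg}(S)$ combined with maximality.

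Once $\rho\in\Cong(\chi(G))$ is established, Lemma~\ref{lem: lump=congr} shows that $\rho$ is lumpable for $G$. Since $2\le m<|V(G)|$ — because $\chi(G)$ has a synchronizing pair unless it is permutative — $\rho$ is a nontrivial congruence, contradicting non-lumpability. The remaining case is when $\chi(G)$ is permutative, so that $\rho$ is the discrete partition. Then all letters are permutations, and $G$ is Eulerian, so $\pi=\mathbf 1$. Any coarsening then gives further equipartitions: $\{V\}$ is trivial, and the discrete partition is itself an equipartition. To resolve this case, note that the orbit partition of the permutation group generated by the letters is lumpable, since it is closed under every letter. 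Strong connectivity makes it universal, so this case yields nothing directly. Instead, observe that for $|V|=n\ge 2$, $\mathbf 1$ has the discrete equipartition, and if $n$ is composite it also has another one, contradicting uniqueness. If $n$ is prime, then the assumption of non-lumpability combined with the Eulerian property has to be used, for instance via Proposition~\ref{lem:prime-eulerian-dichotomy}, to rule out a permutative coloring. I expect this corner case to require care, and possibly the convention that the discrete partition counts as the nontrivial equipartition.
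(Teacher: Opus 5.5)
Your main line is the paper's argument. Friedman's partition $\rho=\ker w$ is an equipartition of $\pi$. Uniqueness makes it a congruence of $\chi(G)$ (Proposition~\ref{prop:unique-equipartition}, i.e.\ \cite[Lemma~12]{GuPri}). Lemma~\ref{lem: lump=congr} then makes it lumpable. Your direct replacement for the citation is correct, and simpler than you fear. Since $V\cdot aw\subseteq V\cdot w$, the kernel of $aw$ has at most $m$ blocks. Each block is synchronized by $aw$, so it has weight at most the maximal value $N$, while the total weight is $mN$. Hence $\ker(aw)$ has exactly $m$ blocks of weight $N$, and no averaging is needed.

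\textbf{The gap.} The real problem is the permutative case you flagged, and it cannot be closed, because the statement is false there.
\begin{itemize}
\item If $G$ is Eulerian and $|V(G)|$ is prime, then $\pi=\mathbf 1$ and its only equipartition is the discrete one. So the hypothesis holds whether or not the discrete partition is counted as nontrivial; the convention you mention does not rescue the case.
\item Yet such a $G$ always has a permutative coloring: a $1$-factorization of the associated $k$-regular bipartite multigraph, which exists by K\"onig's theorem (cf.\ Lemma~\ref{lem:bip-eul-correspondence}). A permutation automaton on at least two states is not synchronizing.
\item So Proposition~\ref{lem:prime-eulerian-dichotomy} cannot rule these colorings out. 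It says they are exactly the non-synchronizing ones.
\item Non-lumpability does not help either. Take vertices $0,1,2$, a loop at each vertex, and the triangle $0\to1\to2\to0$. This $2$-out graph is Eulerian and strongly connected. It is non-lumpable: in $\{\{i,i+1\},\{i+2\}\}$ (indices mod $3$), vertex $i$ sends two edges into $\{i,i+1\}$ but $i+1$ sends only one. Labelling the loops $a$ and the triangle $b$ gives a permutation automaton, so this graph is a counterexample.
\end{itemize}

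The paper's proof has the same hole. In this case $\rho=\Delta_G$, which is lumpable for every graph, so ``$\rho$ is lumpable'' contradicts nothing.

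\textbf{A repair.} The statement needs an extra hypothesis, for example that $G$ is not Eulerian (equivalently $\pi\neq\mathbf 1$). Then no coloring is permutative, since a permutative coloring forces in-degree $k$ everywhere. Also $\Delta_G$ is not an equipartition of $\pi$. Hence $2\le m<|V(G)|$, and your argument, like the paper's, goes through.
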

\begin{proof}
If $\pi$ admits no nontrivial equipartition, then $G$ is totally synchronizing by~\cite[Theorem~8]{GuPri}.
Assume that $\pi$ admits a unique nontrivial equipartition.
Suppose, by contradiction, that $G$ is not totally synchronizing. Then there exists a
non-synchronizing coloring $\chi$ of $G$. Let $\rho$ be the partition of $V(G)$ into maximal
synchronizing subsets of the automaton $\chi(G)$.
Since $\chi(G)$ is not synchronizing, $\rho$ is nontrivial. By Theorem~\ref{thm:friedman-weight},
$\rho$ is an equipartition of $\pi$. Hence $\rho$ is the unique nontrivial equipartition of $\pi$.
By Proposition~\ref{prop:unique-equipartition}, applied to $\chi$ and $\rho$, the partition
$\rho$ is lumpable for $G$. This contradicts the assumption that $G$ is non-lumpable.
Therefore every coloring of $G$ is synchronizing.
\end{proof}
We verified by exhaustive computation that all three underlying digraphs in Figure~\ref{fig:nonlumpable-examples} are non-lumpable.
The left example is nevertheless totally synchronizing: its integer-normalized Perron--Frobenius eigenvector admits a unique
nontrivial equipartition, hence Theorem~\ref{theo: t-s for unique partition} applies.
In contrast, the right (Eulerian) and bottom (non-Eulerian) examples admit non-synchronizing colorings, and therefore are not totally synchronizing.
Taken together, these examples show that non-lumpability, although a rather strong structural constraint, does not imply total synchronization, and thus motivate the following question.

\begin{open}\label{open:char-tot-simple-tot-sync}
Characterize totally simple strongly connected $k$-out graphs that are totally synchronizing.
\end{open}
\section{Complexity results}\label{sec: complexity}
In this section we study computational problems related to total synchronization and lumpability.
We prove that deciding whether a primitive $k$-out graph admits a non-synchronizing coloring is NP-complete, resolving an open problem of Gusev and Szyku{\l}a~\cite[Problem~1]{GusevSzykula15}.
We also prove NP-completeness for the existence of a nontrivial Eulerian lumping, and give a reduction from counting labeled $1$-factorizations to counting non-synchronizing colorings.

For decision problems with integer inputs, \emph{strong NP-hardness} means that NP-hardness persists when numbers are encoded in unary.
A problem is \emph{strongly NP-complete} if it is in NP and strongly NP-hard; see \cite[Ch.~4]{GareyJohnson}.
\begin{theorem}\label{thm:exists-m-strong-npc}
\textsc{Exists-Equipartition} is strongly NP-complete:

\medskip
\noindent\textbf{Instance:} positive integers $a_1,\dots,a_n$;

\noindent\textbf{Question:} is the vector $(a_1,\ldots, a_n)$ equipartitionable into $m\ge 2$ components?
\end{theorem}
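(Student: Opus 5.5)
Membership in NP is immediate. A certificate is a labeling $c:[n]\to[m]$ with $2\le m\le n$ that assigns each index to a block, and we check in polynomial time that every block is nonempty and all block sums coincide. Since the certificate has size $O(n\log n)$, independent of the magnitudes of the $a_i$, membership holds under any encoding.

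For strong NP-hardness I would reduce from \textsc{3-Partition}, which is strongly NP-complete \cite[Ch.~4]{GareyJohnson}. An instance consists of $3q$ positive integers $b_1,\dots,b_{3q}$ with $\sum b_i=qB$ and $B/4<b_i<B/2$, and asks whether $[3q]$ splits into $q$ triples each summing to $B$. The output vector is built in three steps, which I will carry out in this order.

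\textbf{Step 1: fix the number of blocks.} I append one ``heavy'' element $h$ and aim for equipartitions that must consist of exactly two blocks. Setting $h:=qB$ gives total weight $2qB$. In an equipartition into $m$ blocks each block has weight $2qB/m$. The block containing $h$ has weight at least $qB$, so $m\le 2$, hence $m=2$. That block is then exactly $\{h\}$, and the remaining elements form the other block, so this vector is \emph{always} equipartitionable. This shows the naive padding trivializes the problem, and it must be repaired.

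\textbf{Step 2: force $m=q$.} The correct idea is to add $q$ ``marker'' elements $M_1=\dots=M_q=T$, with $T$ large, for example $T=q^2B$, and to rescale. Each intended block is $\{M_j\}\cup$ a triple, of weight $T+B$, and the total is $q(T+B)$. Now take any equipartition into $m$ blocks of weight $W=q(T+B)/m$. The small elements sum to $qB<T$. If $m>q$, then $W<T+B$; moreover $m\le n$ and the total has only the limited divisors we control, and I would pick $T$ so that $W\ge T$ forces each block to contain a marker. The case $m<q$ must also be excluded. A block with two markers has weight at least $2T$, while a marker-free block has weight at most $qB<T$. Since every block has the same weight $W$, the blocks must either all contain exactly $r$ markers or all contain none, and the second option is impossible because markers exist. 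So every block contains exactly $r$ markers, giving $m=q/r$ and $W=r(T+B)$.

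\textbf{Step 3: rule out the coarser partitions ($r>1$).} This is the main obstacle, because coarser equipartitions ($r>1$) could exist even when no 3-partition exists. I would first try making the markers pairwise distinct, $M_j=T+j\cdot D$ with $D>qB$, together with an auxiliary element that breaks divisibility. If this proves awkward, I would instead take $q$ prime after padding the \textsc{3-Partition} instance with dummy triples $(B/3\pm\cdot)$ that scale cleanly; then $r\in\{1,q\}$. The case $r=q$ is the trivial one-block partition, excluded by the requirement $m\ge2$. With $r=1$, every block consists of one marker plus small elements summing to $B$, and the bounds $B/4<b_i<B/2$ force each of these to be a triple. Hence equipartitionability is equivalent to the existence of a 3-partition. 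All numbers remain polynomially bounded in the unary size of the input, which yields strong NP-hardness. Verifying that the prime-padding preserves the answer is the step I expect to need the most care.
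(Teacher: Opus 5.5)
Your architecture matches the paper's: markers heavier than all non-marker mass force every block to carry the same number $r\ge 1$ of markers. A prime number of markers then forces $r=1$, and the bounds $B/4<b_i<B/2$ make each block a triple. For the first step, the correct justification is that an $r$-marker block has weight in $[rT,\,rT+qB]$, and these intervals are pairwise disjoint since $T>qB$.

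The genuine gap is in how you make the number of markers prime. Padding the \textsc{3-Partition} instance with dummy triples whose entries lie in $(B/4,B/2)$ does not preserve the answer, because dummies can combine with original elements. Take $B=36$ and $(b_i)=(10,10,10,14,14,14)$, so $q=2$. This is a NO-instance, since the possible triple sums are $30,34,38,42$. After adding the dummy triple $(12,12,12)$ you get $q=3$ prime and the 3-partition $\{10,14,12\}$ taken three times. Your output vector would then be equipartitionable although the source instance is NO. Your other option fails in the opposite direction: markers $T+jD$ with $D>qB$ make the intended blocks unequal. You also never bound the padding; Bertrand's postulate is what keeps it polynomial.

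The missing idea is to pad in the output vector, using elements that cannot share a block with anything. Take a prime $p$ with $q<p<2q$, $p$ markers of value $T=pB+1$, and $p-q$ fillers each equal to $B$. The total is $p(B+T)$, and the intended partition works: each triple, or each single filler, goes with one marker. Conversely, your marker argument gives exactly $p$ blocks. Each has one marker and a non-marker part of weight exactly $B$, since the non-marker total is $pB<T$. A filler exhausts this budget and so sits alone. Hence the remaining $q$ blocks consist only of original $b_i$'s and are triples. This is precisely the paper's construction. The fillers need not obey the \textsc{3-Partition} bounds, because they never enter a \textsc{3-Partition} instance.
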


\begin{proof}
Membership in NP is immediate: a certificate is $(m,S_1,\dots,S_m)$, and one checks in polynomial time
that the $S_j$ form a partition and that all block sums are equal.

For strong NP-hardness we reduce from restricted \textsc{3-Partition}, which is strongly NP-complete
\cite[SP15]{GareyJohnson}. Thus we are given integers $(b_1,\dots,b_{3m};B)$ such that
\[
\sum_{i=1}^{3m} b_i = mB
\qquad\text{and}\qquad
\frac{B}{4}<b_i<\frac{B}{2}\ \ \forall i,
\]
and we ask whether $\{1,\dots,3m\}$ can be partitioned into $m$ triples each summing to $B$.

Choose a prime number $p$ with $m<p<2m$, which exists by the Bertrand--Chebyshev theorem
\cite{ApostolANT}. (Here $m$ is determined by the input size, since the instance contains
exactly $3m$ integers.) Define the marker value
\[
Q := pB + 1,
\]
and construct the multiset
\[
\mathcal A \;:=\; \{b_1,\dots,b_{3m}\}\ \cup\ \{\underbrace{B,\dots,B}_{p-m\ \text{copies}}\}\ \cup\
\{\underbrace{Q,\dots,Q}_{p\ \text{copies}}\}.
\]
Its total sum is
\[
T \;=\; mB + (p-m)B + pQ \;=\; pB + pQ \;=\; p(B+Q).
\]

We claim that the \textsc{3-Partition} instance is a YES-instance if and only if $\mathcal A$ admits an
equal-sum partition into some number of blocks $m''\ge 2$.

If the \textsc{3-Partition} instance is YES, partition the $b_i$ into $m$ triples $T_1,\dots,T_m$ each summing to $B$.
Then form $p$ blocks of sum $B+Q$ by taking
\[
T_r\cup\{Q\}\quad(r=1,\dots,m),
\qquad\text{and}\qquad
\{B\}\cup\{Q\}\quad\text{for the remaining }(p-m)\text{ blocks}.
\]
Hence $\mathcal A$ is a YES-instance of \textsc{Exists-Equipartition}.

Conversely, suppose $\mathcal A$ admits an equal-sum partition into $m''\ge2$ blocks, each of common sum $N$.
Write the elements of $\mathcal A$ as three groups:
\[
\underbrace{b_1,\dots,b_{3m}}_{\text{original}},\qquad
\underbrace{B,\dots,B}_{p-m\ \text{fillers}},\qquad
\underbrace{Q,\dots,Q}_{p\ \text{markers}}.
\]
The total contribution of the \emph{non-marker} elements (i.e.\ original plus fillers) is
\[
\sum_{i=1}^{3m} b_i \;+\; (p-m)\,B \;=\; mB + (p-m)B \;=\; pB.
\]
Now fix any block in the partition and let it contain $r$ markers. The remaining (non-marker) elements
in that block have sum exactly $N-rQ$, and since all non-marker elements in the \emph{entire instance}
sum to $pB$, we always have
\[
0 \le N-rQ \le pB.
\]
In particular, if two blocks contained different numbers of markers, say $r$ and $r+1$, then the sums of
their non-marker parts would differ by exactly $Q$: $(N-rQ) - (N-(r+1)Q) = Q.$
But each of these non-marker sums lies in $[0,pB]$, and $Q=pB+1$, so such a difference is impossible.
Hence every block contains the same number $r$ of markers, and therefore $m''r=p$, that is, $m''=p$, $r=1$. Hence, we have  $N=T/p=B+Q$. Removing the unique marker from each block leaves $p$ blocks whose sums are all $B$. Each filler element equals $B$, so it must appear alone in its post-marker block (otherwise the sum would exceed $B$).
Thus the remaining $m$ post-marker blocks consist only of the original numbers $b_i$ and each has sum $B$.
By the bounds $\frac{B}{4}<b_i<\frac{B}{2}$, each such block contains exactly three elements, yielding a valid
\textsc{3-Partition} solution. Finally, the reduction adds $O(p)=O(m)$ extra integers, and the largest new integer is $Q=pB+1$ with $p<2m$,
so all numbers produced are polynomially bounded in the size of the restricted \textsc{3-Partition} instance.
Since restricted \textsc{3-Partition} is strongly NP-complete , this proves strong NP-hardness.
Therefore \textsc{Exists-Equipartition} is strongly NP-complete.
\end{proof}
Let $\mathbf w=(w_1,\dots,w_n)\in\mathbb N^n_{>0}$, and set $W:=\sum_{j=1}^n w_j$. Define a graph $G(\mathbf w)=(V,E,s,t)$ by setting $V=[n]=\{1,\dots,n\}$ and
\[
E=\{e_{i,j,r}: i,j\in[n],\ 1\le r\le w_j\},
\]
with $s(e_{i,j,r})=i$ and $t(e_{i,j,r})=j$. Thus, for each ordered pair $(i,j)$ there are exactly $w_j$ parallel edges from $i$ to $j$. In particular, every vertex has out-degree $W$, so $G(\mathbf w)$ is a $W$-out graph. Moreover, since $w_j>0$ for every $j$, there is at least one edge from every vertex to every vertex, and therefore $G(\mathbf w)$ is primitive. We call $G(\mathbf w)$ the \emph{maximally lumpable} digraph associated to $\mathbf w$; this construction already appears in \cite[Theorem~8]{GuPri}.
\begin{lemma}\label{lem:every-partition-lumpable}
Every partition $\mathcal C=\{C_1,\dots,C_m\}$ of $V=[n]$ is lumpable for $G(\mathbf w)$. Moreover, $\mathbf w$ is a left Perron--Frobenius eigenvector of $G(\mathbf w)$.
\end{lemma}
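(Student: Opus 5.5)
The plan is to verify both claims by computing the relevant edge multiplicities directly; no earlier structural result is needed beyond the definitions.

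For lumpability, fix a partition $\mathcal C=\{C_1,\dots,C_m\}$ of $V=[n]$. Take a block $C_j$ and any vertex $u\in V$, and count the edges from $u$ into $C_j$. By construction there are exactly $w_x$ parallel edges from $u$ to each vertex $x$, whatever $u$ is. Hence
\[
|\delta_u(C_j)|=\sum_{x\in C_j}|\delta_u(x)|=\sum_{x\in C_j} w_x=\mathrm{Wg}_{\mathbf w}(C_j),
\]
and this quantity does not depend on $u$ at all. In particular it is constant for $u$ ranging over any block $C_i$, which is exactly the definition of a lumpable partition. This covers every partition, including the discrete and the universal one.

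For the eigenvector claim, let $A$ be the adjacency matrix of $G(\mathbf w)$. Every entry satisfies $A(i,j)=w_j$, so $A=\mathbf 1\mathbf w^\top$. Then
\[
\mathbf w^\top A=(\mathbf w^\top\mathbf 1)\,\mathbf w^\top=W\,\mathbf w^\top,
\]
so $\mathbf w$ is a positive left eigenvector for the eigenvalue $W$. Since $G(\mathbf w)$ is $W$-out, $W$ is the spectral radius. The graph is also strongly connected, because every ordered pair of vertices is joined by an edge, so Perron--Frobenius uniqueness identifies $\mathbf w$ as a (positive multiple of the) left Perron--Frobenius eigenvector.

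One point needs a remark. The integer-normalized representative is $\mathbf w/\gcd(w_1,\dots,w_n)$. This is harmless for later use, since equipartitionability is invariant under positive scaling. There is no real obstacle in either step.
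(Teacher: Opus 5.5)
Your proposal is correct and follows essentially the same route as the paper: the direct count $|\delta_u(C_j)|=\sum_{x\in C_j}w_x$, independent of $u$, gives lumpability, and $\mathbf w^\top A=W\mathbf w^\top$ together with Perron--Frobenius gives the eigenvector claim. Your remark that the integer-normalized representative is $\mathbf w/\gcd(w_1,\dots,w_n)$ is an accurate extra precision; it is harmless, since the lemma only asserts that $\mathbf w$ is \emph{a} left Perron--Frobenius eigenvector.
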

\begin{proof}
Fix blocks $C_a,C_b\in\mathcal C$ and vertices $u,u'\in C_a$. By construction, \(
|\delta_u(C_b)|=\sum_{j\in C_b} w_j=|\delta_{u'}(C_b)|\).
Hence $\mathcal C$ is lumpable. Now let $A$ be the adjacency matrix of $G(\mathbf w)$, so $A_{ij}=w_j$ for all $i,j$, and set $W:=\sum_{i=1}^n w_i$. Then for each $j$,
\[
(\mathbf w^\top A)_j=\sum_{i=1}^n w_iA_{ij}=\sum_{i=1}^n w_i\,w_j=Ww_j.
\]
Thus $\mathbf w^\top A=W\mathbf w^\top$. Since $A\ge 0$ and $\mathbf w>0$, it follows by Perron--Frobenius theorem that $W$ is the Perron--Frobenius eigenvalue and $\mathbf w$ is a corresponding left Perron--Frobenius eigenvector.
\end{proof}

\begin{lemma}\label{lem:eulerian-for-Gw-equivalent-to-partitionable}
Let $\mathbf w=(w_1,\dots,w_n)\in\mathbb N^n$ and let $G(\mathbf w)=(V,E,s,t)$ be the maximally lumpable digraph associated to $\mathbf w$, with $V=[n]$. Let $\mathcal C=\{C_1,\dots,C_m\}$, with $m\ge 2$, be a partition of $V$, and let $\rho_{\mathcal C}$ be the associated congruence. Then $G(\mathbf w)/\rho_{\mathcal C}$ is nontrivial and Eulerian if and only if $\mathcal C$ is an equipartition of $\mathbf w$. In particular, $G(\mathbf w)$ admits a nontrivial Eulerian quotient if and only if $\mathbf w$ is equipartitionable.
\end{lemma}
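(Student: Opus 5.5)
The argument is a direct computation of the quotient adjacency matrix, combined with the in-degree characterization of Eulerian $k$-out graphs. By Lemma~\ref{lem:every-partition-lumpable}, every partition $\mathcal C$ of $[n]$ is lumpable, so $\rho_{\mathcal C}$ is a congruence and $\widehat G:=G(\mathbf w)/\rho_{\mathcal C}$ is a well-defined $W$-out graph. Since $m\ge 2$, it has at least two vertices and is therefore nontrivial. It remains to decide when it is Eulerian.

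\textbf{Step 1: the quotient adjacency matrix.} For blocks $C_a,C_b$, every vertex $u\in C_a$ sends exactly $|\delta_u(C_b)|=\sum_{j\in C_b}w_j=:\mathrm{Wg}_{\mathbf w}(C_b)$ edges into $C_b$. Hence $\widehat G$ has exactly $\mathrm{Wg}_{\mathbf w}(C_b)$ parallel edges from $[C_a]$ to $[C_b]$, and this number is independent of $a$.

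\textbf{Step 2: the in-degrees.} Summing over the $m$ source blocks, the in-degree of the quotient vertex $[C_b]$ is $m\,\mathrm{Wg}_{\mathbf w}(C_b)$. The out-degree of every quotient vertex is $W$. Thus $\widehat G$ is Eulerian exactly when $m\,\mathrm{Wg}_{\mathbf w}(C_b)=W$ for all $b$. This says that all blocks have the common weight $W/m$, i.e.\ that $\mathcal C$ is an equipartition of $\mathbf w$.

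Alternatively, one can argue through eigenvectors. By Lemma~\ref{lem:pf-lump-part} together with Lemma~\ref{lem:every-partition-lumpable}, the left Perron--Frobenius eigenvector of $\widehat G$ is proportional to the block sums $(\mathrm{Wg}_{\mathbf w}(C_b))_b$. A $k$-out graph is Eulerian if and only if $\mathbf 1$ is a left eigenvector for the eigenvalue $k$, as noted in the proof of Proposition~\ref{prop:unique-equipartition}. So again Eulerian is equivalent to equal block sums. The direct degree count is cleaner, and it is the route I would write down.

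\textbf{Step 3: the ``in particular'' clause.} A nontrivial Eulerian quotient is $G(\mathbf w)/\rho_{\mathcal C}$ for some partition $\mathcal C$ with $m\ge2$ blocks. By the equivalence just proved, such a quotient exists if and only if some partition with $m\ge 2$ blocks is an equipartition, which is precisely the definition of $\mathbf w$ being equipartitionable.

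The argument has no genuine obstacle. The only point needing care is the convention that ``nontrivial'' means the congruence is not universal, i.e.\ $m\ge 2$; the discrete partition is allowed. This matches the definition of an equipartition, which also requires $m\ge 2$.
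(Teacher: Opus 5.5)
Your proof is correct and follows essentially the same route as the paper: every quotient vertex sends $\mathrm{Wg}_{\mathbf w}(C_b)$ edges to $[C_b]$ regardless of the source block, so $[C_b]$ has in-degree $m\,\mathrm{Wg}_{\mathbf w}(C_b)$ against out-degree $W$, and the quotient is Eulerian iff every block weight equals $W/m$. Your remark that ``nontrivial'' here only requires $m\ge 2$, with the discrete partition allowed, matches how the paper's proof implicitly reads the statement.
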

\begin{proof}
Set $W:=\sum_{v=1}^n w_v$. By Lemma~\ref{lem:every-partition-lumpable}, the partition $\mathcal C$ is lumpable, so the quotient $G(\mathbf w)/\rho_{\mathcal C}$ is well defined. Fix blocks $C_i,C_j\in\mathcal C$ and a vertex $u\in C_i$. By construction, for each $v\in[n]$ there are exactly $w_v$ edges from $u$ to $v$, hence $|\delta_u(C_j)|=\sum_{v\in C_j} w_v$. Therefore the number of edges from the quotient vertex $C_i$ to the quotient vertex $C_j$ is $\sum_{v\in C_j} w_v$, independently of $i$.

It follows that every quotient vertex has  $W$, while the in-degree of $C_i$ is $m\sum_{v\in C_i} w_v$. Hence $G(\mathbf w)/\rho_{\mathcal C}$ is Eulerian if and only if $W=m\sum_{v\in C_i} w_v$ for every $i$, equivalently $\sum_{v\in C_i} w_v=W/m$ for every $i$. This is exactly the condition that $\mathcal C$ be an equipartition of $\mathbf w$.
\end{proof}
\begin{proposition}\label{prop:not-totally-synchronizing}
Let $\mathbf w\in\mathbb N_{>0}^{n}$. Then $G(\mathbf w)$ is not totally synchronizing if and only if $\mathbf w$ is equipartitionable.
\end{proposition}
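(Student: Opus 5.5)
We prove both directions separately, relying on Theorem~\ref{thm:friedman-weight} for one direction and on the lifting Lemma~\ref{lem:lifting-coloring} for the other.

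\emph{If $G(\mathbf w)$ is not totally synchronizing, then $\mathbf w$ is equipartitionable.} Since $G(\mathbf w)$ is complete with positive multiplicities, it is strongly connected. By Lemma~\ref{lem:every-partition-lumpable}, $\mathbf w$ is a positive left Perron--Frobenius eigenvector, so Theorem~\ref{thm:friedman-weight} applies with this $\mathbf w$. (The integer-normalized eigenvector is $\mathbf w/\gcd(\mathbf w)$, and equipartitionability is invariant under positive scaling.) Take a non-synchronizing coloring $\chi$. Theorem~\ref{thm:friedman-weight} gives a partition of $V$ into synchronizing subsets of equal $\mathbf w$-weight. Since $\chi(G(\mathbf w))$ is not synchronizing, no block equals $V$, so there are $m\ge 2$ blocks. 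This partition is an equipartition of $\mathbf w$.

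\emph{If $\mathbf w$ is equipartitionable, then $G(\mathbf w)$ is not totally synchronizing.} Let $\mathcal C=\{C_1,\dots,C_m\}$, $m\ge2$, be an equipartition. By Lemma~\ref{lem:eulerian-for-Gw-equivalent-to-partitionable}, the quotient $\Gamma:=G(\mathbf w)/\rho_{\mathcal C}$ is a nontrivial Eulerian $W$-out graph: each pair of quotient vertices is joined by exactly $W/m$ parallel edges. We choose a coloring of $\Gamma$ in which every letter acts as a permutation. For instance, partition $[W]$ into $m$ sets $L_0,\dots,L_{m-1}$ of size $W/m$, and send letters of $L_r$ from $C_i$ to $C_{i+r \bmod m}$. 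Each letter then acts as a cyclic shift, and the edge counts match those of $\Gamma$. This coloring is not synchronizing, since every word acts bijectively on $m\ge2$ states.

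By Lemma~\ref{lem:lifting-coloring}, this coloring lifts to a coloring $\widetilde\chi$ of $G(\mathbf w)$ with $\rho_{\mathcal C}\in\Cong(\widetilde\chi(G(\mathbf w)))$, and the induced quotient automaton is the permutation automaton above. If some word $u$ reset $\widetilde\chi(G(\mathbf w))$, its image would reset the quotient, because the quotient map commutes with the action. This is impossible, so $\widetilde\chi$ is non-synchronizing.

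The only delicate point is the scaling issue in the first direction, namely that Theorem~\ref{thm:friedman-weight} may be applied with $\mathbf w$ itself rather than the normalized vector. The theorem is already stated for any positive left Perron--Frobenius eigenvector, so this is harmless. The cyclic-shift coloring in the second direction is routine; alternatively, one can invoke \cite[Lemma~1]{Kari2003} on the Eulerian quotient to obtain a non-synchronizing coloring of $\Gamma$ and lift it in the same way.
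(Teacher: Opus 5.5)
Your proof is correct and follows the paper's argument. Theorem~\ref{thm:friedman-weight}, applied directly with $\mathbf w$, gives the forward direction, and lifting a non-synchronizing coloring of the nontrivial Eulerian quotient via Lemma~\ref{lem:lifting-coloring} gives the converse. The only difference is that you build the non-synchronizing quotient coloring explicitly as cyclic shifts, using that every ordered pair of classes carries exactly $W/m$ parallel edges, whereas the paper cites \cite[Lemma~1]{Kari2003}; this makes that step self-contained without changing the route.
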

\begin{proof}
Assume first that $\mathbf w$ admits an equipartition $\mathcal C$. By Lemma~\ref{lem:eulerian-for-Gw-equivalent-to-partitionable}, the quotient graph $G(\mathbf w)/\rho_{\mathcal C}$ is Eulerian and nontrivial. Hence, by \cite[Lemma~1]{Kari2003}, it admits a non-synchronizing coloring $\chi$. By Lemma~\ref{lem:lifting-coloring}, $\chi$ lifts to a coloring $\widetilde\chi$ of $G(\mathbf w)$ such that $\rho_{\mathcal C}\in\Cong(\widetilde\chi(G(\mathbf w)))$ and the quotient automaton $\widetilde\chi(G(\mathbf w))/\rho_{\mathcal C}$ is exactly $\chi(G(\mathbf w)/\rho_{\mathcal C})$. Therefore $\widetilde\chi(G(\mathbf w))$ is not synchronizing, since otherwise its quotient would be synchronizing as well. Thus $G(\mathbf w)$ is not totally synchronizing. Conversely, suppose that $G(\mathbf w)$ is not totally synchronizing, and let $\chi$ be a coloring such that $\chi(G(\mathbf w))$ is not synchronizing. Since $G(\mathbf w)$ is primitive, and thus, strongly connected, Theorem~\ref{thm:friedman-weight} applies, and by Lemma~\ref{lem:every-partition-lumpable} the vector $\mathbf w$ is a left Perron--Frobenius eigenvector of $G(\mathbf w)$. Hence there exists a partition of $V(G(\mathbf w))$ into synchronizing subsets of equal maximal $\mathbf w$-weight. Because $\chi(G(\mathbf w))$ is not synchronizing, this partition is nontrivial, say $S_1,\dots,S_m$ with $m\ge 2$. Therefore $\mathbf w$ is equipartitionable.
\end{proof}
The following theorem shows that deciding whether a $k$-out graph $G$ has a non-synchronizing coloring is NP-complete, thereby solving \cite[Problem~1]{GusevSzykula15}.
\begin{theorem}\label{thm:NTS-NPcomplete}
\textsc{Not-Totally-Synchronizing} is NP-complete:

\smallskip
\noindent\textbf{Instance:} a primitive $k$-out graph $G$;

\noindent\textbf{Question:} does there exist a coloring $\chi$ such that the colored automaton $\chi(G)$ is not synchronizing?
\end{theorem}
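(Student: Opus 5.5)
Membership in NP and NP-hardness are handled separately.

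\emph{Membership in NP.} The certificate is a coloring $\chi$ of $G$, which has size polynomial in $|E(G)|$. We must check in polynomial time that $\chi(G)$ is \emph{not} synchronizing. Synchronizability of a deterministic complete automaton is decidable in polynomial time by the classical pair criterion: $\chi(G)$ is synchronizing if and only if every pair $\{p,q\}$ of states is synchronizable. This is checked by a backward breadth-first search on the pair automaton, which has $O(n^2)$ states, in time $O(kn^2)$. The verifier accepts exactly when some pair is not synchronizable. This places \textsc{Not-Totally-Synchronizing} in NP.

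\emph{NP-hardness.} We reduce from \textsc{Exists-Equipartition}, which is strongly NP-complete by Theorem~\ref{thm:exists-m-strong-npc}. Given positive integers $a_1,\dots,a_n$, we output the maximally lumpable digraph $G(\mathbf w)$ with $\mathbf w=(a_1,\dots,a_n)$. It is a primitive $W$-out graph, $W=\sum_j a_j$, since it has an edge between every ordered pair of vertices. By Proposition~\ref{prop:not-totally-synchronizing}, $G(\mathbf w)$ admits a non-synchronizing coloring if and only if $\mathbf w$ is equipartitionable. So the map is a correct many-one reduction, and only its running time needs to be checked.

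\emph{Main obstacle: size of the construction.} The graph $G(\mathbf w)$ has $n$ vertices and $nW$ edges, written explicitly as parallel edges. This is polynomial only when the $a_i$ are polynomially bounded in the input size, i.e.\ given in unary. Here strong NP-hardness is essential: \textsc{Exists-Equipartition} stays NP-hard when the integers are written in unary. Indeed, the reduction of Theorem~\ref{thm:exists-m-strong-npc} produces numbers bounded by $Q=pB+1=O(mB)$, and restricted \textsc{3-Partition} is itself strongly NP-hard. Starting from unary-encoded instances, $W$ is polynomial in the instance size, so $G(\mathbf w)$ is built in polynomial time. If the graph is instead encoded with edge multiplicities written in binary, the same argument works even more directly. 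In either encoding, NP-hardness follows, which completes the plan.
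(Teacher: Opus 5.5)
Your proof is correct and is essentially the paper's argument: membership in NP via a coloring certificate checked with the pair-automaton criterion, and hardness by applying Proposition~\ref{prop:not-totally-synchronizing} to the maximally lumpable graph $G(\mathbf w)$. Strong NP-hardness of \textsc{Exists-Equipartition} (Theorem~\ref{thm:exists-m-strong-npc}) is what lets $G(\mathbf w)$, with its $nW$ explicit edges, be built in polynomial time.

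One caveat concerns your closing remark. If edge multiplicities were written in binary, hardness would transfer, but your membership argument would not, because a coloring then has exponential size. So NP-completeness is established only for the explicit edge-list encoding, which is the intended one.
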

\begin{proof}
Membership in NP is immediate: a certificate is a coloring $\chi$, and one can check in polynomial time whether $\chi(G)$ is non-synchronizing, for instance via the standard reachability test on the pair automaton.
For NP-hardness, we give a polynomial-time many-one reduction from \textsc{Exists-Equipartition}. Since \textsc{Exists-Equipartition} is strongly NP-complete, it remains NP-complete when the input vector $\mathbf w=(a_1,\dots,a_n)$ is given in unary. From such an instance we can therefore construct in polynomial time the maximally lumpable graph $G(\mathbf w)$. By construction, $G(\mathbf w)$ is primitive. By Proposition~\ref{prop:not-totally-synchronizing}, the graph $G(\mathbf w)$ is not totally synchronizing if and only if $\mathbf w$ is equipartitionable. Therefore $G(\mathbf w)$ is a YES-instance of \textsc{Not-Totally-Synchronizing} if and only if $\mathbf w$ is a YES-instance of \textsc{Exists-Equipartition}. This proves NP-hardness, and hence NP-completeness.
\end{proof}
In the previous reduction the out-degree of $G(\mathbf w)$ is $k=\sum_i w_i$, hence it is not fixed. Thus the theorem resolves the general decision problem, while the fixed-out-degree case remains open. 
\begin{open}\label{open:nts-fixed-k}
Is \textsc{Not-Totally-Synchronizing} still NP-complete when the out-degree $k$ is fixed, for instance for $k=2$?
\end{open}
Beyond the decision problem, it is natural to ask for quantitative information on the number of synchronizing and non-synchronizing colorings of a primitive $k$-out graph. This point of view is closely related to the synchronizing ratio studied by Gusev and Szyku{\l}a, and leads naturally to the corresponding counting problem. For a $k$-out graph $G$, recall that $\mathcal C(G)$ denotes the set of all colorings, and define
\[
\#\mathrm{NSC}(G):=\bigl|\{\chi\in\mathcal C(G): \chi(G)\text{ is non-synchronizing}\}\bigr|,
\qquad
\#\mathrm{Sync}(G):=|\mathcal C(G)|-\#\mathrm{NSC}(G).
\]
Since $|\mathcal C(G)|=(k!)^{|V(G)|}$, exact counting of $\#\mathrm{NSC}(G)$ is equivalent to exact counting of $\#\mathrm{Sync}(G)$.

\begin{conjecture}[\cite{GusevCIRM2015}]\label{conj:count-nsc}
The counting problem $G\mapsto \#\mathrm{NSC}(G)$ is \#P-complete on the class of primitive $k$-out graphs. Moreover, this remains \#P-complete for fixed out-degree $k$, already for $k=2$.
\end{conjecture}
We were not able to prove Conjecture~\ref{conj:count-nsc}, but we can relate $\#\mathrm{NSC}$ to a classical counting problem on regular bipartite simple graphs. In a $k$-regular bipartite simple graph, a decomposition of the edge set into $k$ perfect matchings is the same object as a proper edge-$k$-coloring; such decompositions are also
known as $1$-factorizations. Exact counting of $1$-factorizations is widely believed to be computationally hard, and the precise complexity
status of counting edge-$k$-colorings of bipartite graphs (equivalently, $1$-factorizations in the $k$-regular case) is currently open~\cite{HongMiklos2023}. Our contribution here is a
tight reduction showing that, on prime-size strongly connected Eulerian graphs, counting non-synchronizing colorings
is essentially the same as counting permutation colorings, and the latter coincides with counting labeled
$1$-factorizations of the associated bipartite graph.

\medskip
Let $B=(U,W,E)$ be a $k$-regular bipartite (undirected) multigraph with $|U|=|W|$. A \emph{labeled $1$-factorization}
is an ordered $k$-tuple $(M_1,\dots,M_k)$ of pairwise edge-disjoint perfect matchings whose union is $E$.
Let $\#1\mathrm{Fact}_{\mathrm{lab}}(B)$ be the number of labeled $1$-factorizations of $B$.
\begin{lemma}\label{lem:bip-eul-correspondence}
Let $B=(U,W,E_B)$ be a $k$-regular bipartite multigraph, with $U=\{u_1,\dots,u_m\}$ and $W=\{w_1,\dots,w_m\}$. Construct the $k$-out directed multigraph $H_B=(V,E,s,t)$ with $V=\{v_1,\dots,v_m\}$ as follows: for every bipartite edge $e\in E_B$ with endpoints $u_i\in U$ and $w_j\in W$ (with multiplicities), add a directed edge $\phi(e)\in E$ and set $s(\phi(e))=v_i$ and $t(\phi(e))=v_j$. Then $H_B$ is $k$-out Eulerian and
\[
\#\mathrm{Perm}(H_B)=\#1\mathrm{Fact}_{\mathrm{lab}}(B),
\]
where $\#\mathrm{Perm}(H_B)$ denotes the number of permutation colorings of $H_B$, i.e.\ colorings $\chi:E\to[k]$ such that $\chi(H_B)$ acts like a permutation.
\end{lemma}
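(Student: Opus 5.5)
The plan is to verify the two claims separately: first the degree conditions, then an explicit bijection between permutation colorings of $H_B$ and labeled $1$-factorizations of $B$, built from the edge bijection $\phi:E_B\to E$.

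\emph{Degree conditions.} By construction, $\phi$ is a bijection. An edge $e\in E_B$ incident to $u_i$ becomes an edge leaving $v_i$, and an edge incident to $w_j$ becomes an edge entering $v_j$. Since $B$ is $k$-regular, $u_i$ has degree $k$ and $w_j$ has degree $k$. Hence $|\delta_{v_i}(V)|=k$ and $|\delta_V(v_j)|=k$ for every $i,j$, so $H_B$ is $k$-out and Eulerian.

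\emph{The bijection.} Given a labeled $1$-factorization $(M_1,\dots,M_k)$, define $\chi:E\to[k]$ by $\chi(\phi(e))=a$ if and only if $e\in M_a$. This is well defined because the $M_a$ partition $E_B$.

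We first check that $\chi$ is an automaton coloring. Each $M_a$ is a perfect matching, so every $u_i$ is covered by exactly one edge of $M_a$. Hence every $v_i$ has exactly one outgoing edge of color $a$, i.e.\ $\chi$ restricted to $\delta_{v_i}(V)$ is a bijection onto $[k]$.

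We then check that every letter acts as a permutation. Every $w_j$ is covered by exactly one edge of $M_a$, so $v_j$ has exactly one incoming edge of color $a$. Thus the map $v\mapsto \delta_\chi(v,a)$ is injective, hence a permutation of $V$, and $\chi$ is a permutation coloring.

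Conversely, given a permutation coloring $\chi$, set $M_a:=\phi^{-1}(\chi^{-1}(a))$. These sets are pairwise disjoint and their union is $E_B$. We check that each $M_a$ is a perfect matching:
\begin{itemize}
\item each $u_i$ lies in exactly one edge of $M_a$, because $v_i$ has exactly one outgoing $a$-edge;
\item each $w_j$ lies in exactly one edge of $M_a$, because the letter $a$ acts bijectively, so exactly one $a$-edge ends at $v_j$.
\end{itemize}
Hence $(M_1,\dots,M_k)$ is a labeled $1$-factorization.

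The two constructions are mutually inverse, since both are given by transporting labels along $\phi$. Therefore $\#\mathrm{Perm}(H_B)=\#1\mathrm{Fact}_{\mathrm{lab}}(B)$.

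\emph{Multigraph subtlety.} Parallel edges of $B$ correspond, via $\phi$, to distinct parallel edges of $H_B$. Both sides count labelings of individual edges, not of vertex pairs, so multiplicities are matched exactly on both sides.

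The only point that needs care is the equivalence between a letter acting as a permutation and each vertex having exactly one incoming edge of that letter. This holds by finiteness: a map from $V$ to $V$ whose fibres all have size exactly one is a bijection, and conversely.
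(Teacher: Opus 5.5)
Your proposal is correct and follows essentially the same route as the paper: degree counting from $k$-regularity, then the label-transport bijection $\chi(\phi(e))=a \iff e\in M_a$, checking one outgoing and one incoming $a$-edge at each vertex in both directions. Your explicit remark that a letter acts as a permutation exactly when every vertex has one incoming edge of that letter makes precise a step the paper leaves implicit.
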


\begin{proof}
Since $B$ is $k$-regular, each $u_i$ is incident with exactly $k$ edges of $E_B$. By construction, every edge of $E_B$ incident to $u_i$ gives rise to exactly one edge of $E$ with source $v_i$, hence $|\delta_{v_i}(V)|=k$. Similarly, each $w_j$ is incident with exactly $k$ edges of $E_B$, and each such edge gives rise to exactly one edge of $E$ with target $v_j$, hence $|\delta_V(v_j)|=k$. Therefore $H_B$ is Eulerian.
Now fix a labeled $1$-factorization $(M_1,\dots,M_k)$ of $B$, and define a coloring $\chi:E\to[k]$ by setting $\chi(\phi(e))=\sigma$ if and only if $e\in M_\sigma$. For each $\sigma\in[k]$ and each $i\in[m]$, since $M_\sigma$ is a perfect matching, there is exactly one edge of $M_\sigma$ incident to $u_i$, hence exactly one edge in $\delta_{v_i}(V)$ colored $\sigma$. Likewise, there is exactly one edge of $M_\sigma$ incident to $w_i$, hence exactly one edge in $\delta_V(v_i)$ colored $\sigma$. Thus $\chi$ is a permutation coloring of $H_B$. Conversely, let $\chi:E\to[k]$ be a permutation coloring of $H_B$, and for each $\sigma\in[k]$ define $M_\sigma:=\{e\in E_B:\chi(\phi(e))=\sigma\}$. The condition on $\delta_{v_i}(V)$ implies that $M_\sigma$ contains exactly one edge incident to each $u_i$, and the condition on $\delta_V(v_i)$ implies that $M_\sigma$ contains exactly one edge incident to each $w_i$. Thus $M_\sigma$ is a perfect matching of $B$. Moreover, the sets $M_1,\dots,M_k$ are pairwise disjoint and their union is $E_B$, so $(M_1,\dots,M_k)$ is a labeled $1$-factorization. The two constructions are inverse to each other, establishing a bijection between permutation colorings of $H_B$ and labeled $1$-factorizations of $B$, and therefore $\#\mathrm{Perm}(H_B)=\#1\mathrm{Fact}_{\mathrm{lab}}(B)$.
\end{proof}
\begin{proposition}\label{prop:encode-1fact-into-nsc}
Fix $k\ge 3$. Assume that computing $\#1\mathrm{Fact}_{\mathrm{lab}}(B)$ is \#P-hard already on the class of $k$-regular bipartite multigraphs $B=(U,W,E)$ such that $|U|=|W|=p$ is prime and the associated directed multigraph $H_B$ from Lemma~\ref{lem:bip-eul-correspondence} is strongly connected. Then computing
\[
G\longmapsto \#\mathrm{NSC}(G)
\]
is \#P-hard on the class of strongly connected Eulerian $k$-out graphs with a prime number of vertices.
Consequently, under this assumption, the problem is \#P-complete on that class.
\end{proposition}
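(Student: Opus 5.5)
The reduction is parsimonious up to a polynomial-time computable correction. Given an instance $B$ in the assumed hard class, we form the directed multigraph $H_B$ of Lemma~\ref{lem:bip-eul-correspondence}. This takes linear time. By that lemma and the assumptions on $B$, $H_B$ is a strongly connected Eulerian $k$-out graph on $p$ vertices with $p$ prime. Moreover, $\#\mathrm{Perm}(H_B)=\#1\mathrm{Fact}_{\mathrm{lab}}(B)$. It therefore suffices to show that $\#\mathrm{NSC}(H_B)=\#\mathrm{Perm}(H_B)$, so that a single oracle call for $\#\mathrm{NSC}$ on $H_B$ returns the number of labeled $1$-factorizations of $B$.

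The identity $\#\mathrm{NSC}(H_B)=\#\mathrm{Perm}(H_B)$ follows directly from Proposition~\ref{lem:prime-eulerian-dichotomy}. Since $H_B$ is strongly connected, Eulerian, and has a prime number of vertices, a coloring $\chi$ yields a non-synchronizing automaton $\chi(H_B)$ if and only if every letter acts as a permutation. The set of non-synchronizing colorings is therefore exactly the set of permutation colorings, and both counts agree. Note that $|\mathcal C(H_B)|=(k!)^p$ is never needed, since we count $\#\mathrm{NSC}$ directly. One point must be checked: the notion of coloring of a multigraph must be the same in both counts, namely maps $\chi:E\to[k]$ bijective on each out-star, with parallel edges treated as distinct. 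This is the convention used in both Lemma~\ref{lem:bip-eul-correspondence} and the definition of $\mathcal C(G)$, so the counts match as integers.

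For \#P-completeness it remains to place the problem in \#P. We take a nondeterministic machine that guesses a coloring $\chi$ of $G$, which is of polynomial size since $G$ is given with its edge list. The machine then verifies in polynomial time that $\chi(G)$ is non-synchronizing, using the classical pair-automaton criterion: an automaton is synchronizing if and only if every pair of states can be merged, which is a reachability check on $O(|V|^2)$ pairs. Its number of accepting paths is $\#\mathrm{NSC}(G)$. Combined with the \#P-hardness transferred through the parsimonious reduction $B\mapsto H_B$, this gives \#P-completeness on the stated class.

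The only step with real content is the dichotomy, and it is already provided by Proposition~\ref{lem:prime-eulerian-dichotomy}. The main care point is therefore bookkeeping: the hypotheses on $B$ (primality of $p$ and strong connectivity of $H_B$) must be exactly those needed to apply that proposition. The restriction $k\ge 3$ plays no role in the argument itself; it only reflects the assumed hardness hypothesis.
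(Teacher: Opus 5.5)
Your proposal is correct and follows the paper's proof essentially step for step. You build $H_B$, use Lemma~\ref{lem:bip-eul-correspondence} to get $\#\mathrm{Perm}(H_B)=\#1\mathrm{Fact}_{\mathrm{lab}}(B)$, invoke Proposition~\ref{lem:prime-eulerian-dichotomy} to get $\#\mathrm{NSC}(H_B)=\#\mathrm{Perm}(H_B)$, and place the problem in \#P via a polynomial-time non-synchronization check. One small wording point: the reduction is exactly parsimonious, so no ``correction'' term is needed.
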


\begin{proof}
Membership in \#P is immediate, since for a given $k$-out graph $G$ and a coloring $\chi\in\mathcal C(G)$ one can check in polynomial time whether $\chi(G)$ is non-synchronizing. Now let $B=(U,W,E)$ be an instance satisfying the hypothesis, and construct $H_B$ as in Lemma~\ref{lem:bip-eul-correspondence}. By assumption, $H_B$ is strongly connected, and $|V(H_B)|=|U|=p$ is prime. Moreover, Lemma~\ref{lem:bip-eul-correspondence} gives $\#\mathrm{Perm}(H_B)=\#1\mathrm{Fact}_{\mathrm{lab}}(B)$. Since $H_B$ is strongly connected, Eulerian, and has a prime number of vertices, Proposition~\ref{lem:prime-eulerian-dichotomy} implies that for every coloring $\chi\in\mathcal C(H_B)$, the automaton $\chi(H_B)$ is non-synchronizing if and only if it is permutative. Hence $\#\mathrm{NSC}(H_B)=\#\mathrm{Perm}(H_B)=\#1\mathrm{Fact}_{\mathrm{lab}}(B)$. Thus computing $\#\mathrm{NSC}$ is \#P-hard already on the restricted class of strongly connected Eulerian $k$-out graphs with a prime number of vertices.
Since $\#\mathrm{NSC}$ belongs to \#P, the conditional \#P-completeness statement follows.
\end{proof}

\subsection{Complexity results for quotients of $k$-out graphs}
While lumpability and Markov chain aggregation have a long structural literature, the existence problem over all partitions is rarely treated from a worst-case complexity viewpoint. As a result of the previous techniques we have the following result. 
\begin{theorem}\label{thm:eulerian-lumping-npcomplete}\textsc{Exists-Eulerian-Lumping} is NP-complete:

\textbf{Instance:} a primitive $k$-out graph $G$.

\textbf{Question:} does there exist a nontrivial lumpable partition $\rho$ of $V(G)$ such that the quotient $k$-out graph
$G/\rho$ is Eulerian? 
\end{theorem}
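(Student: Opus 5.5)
Membership in NP and NP-hardness are handled separately, the hardness part reusing the maximally lumpable graphs $G(\mathbf w)$ and the strong NP-completeness of \textsc{Exists-Equipartition} (Theorem~\ref{thm:exists-m-strong-npc}).

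\emph{Membership in NP.} A certificate is a partition $\rho=\{C_1,\dots,C_m\}$ of $V(G)$ with $2\le m<|V(G)|$. Checking that $\rho$ is nontrivial is immediate. To check lumpability, compute $|\delta_u(C_j)|$ for every vertex $u$ and block $C_j$ and compare these values inside each block; this takes time polynomial in $|E(G)|$. If $\rho$ is lumpable, the quotient $G/\rho$ has $|\delta_u(C_j)|$ edges from $C_i$ to $C_j$ for any $u\in C_i$. Eulerianity then reduces to checking that every quotient vertex has in-degree $k$. Since $k$ is the out-degree and the input lists all $k|V|$ edges, all these computations are polynomial in the input size.

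\emph{NP-hardness.} I reduce from \textsc{Exists-Equipartition}, which remains NP-complete with unary input because it is strongly NP-complete. Given $\mathbf w=(a_1,\dots,a_n)$ in unary, build $G(\mathbf w)$. It has $n$ vertices and $nW$ edges with $W=\sum_i a_i$, so the construction is polynomial in the unary input size, and $G(\mathbf w)$ is primitive. By Lemma~\ref{lem:every-partition-lumpable}, every partition of $[n]$ is lumpable for $G(\mathbf w)$. By Lemma~\ref{lem:eulerian-for-Gw-equivalent-to-partitionable}, a partition with $m\ge2$ blocks gives an Eulerian quotient exactly when it is an equipartition of $\mathbf w$. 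Hence $G(\mathbf w)$ is a YES-instance of \textsc{Exists-Eulerian-Lumping} if and only if $\mathbf w$ is equipartitionable.

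\emph{Main obstacle: the discrete partition.} The word ``nontrivial'' excludes the discrete partition into singletons as well as $\nabla$. The discrete partition is an equipartition exactly when all $a_i$ are equal, and then it yields the Eulerian graph $G(\mathbf w)$ itself. So I must rule out $\mathbf w$ being equipartitionable only via singletons. I would handle this in one of two ways.
\begin{enumerate}
\item Preprocess the instance: if all $a_i$ are equal and $n$ is prime, the only equipartition is the discrete one. This case can be decided directly and mapped to a fixed NO-instance or YES-instance as appropriate.
\item Observe that in the restricted \textsc{3-Partition} reduction the markers $Q$ differ from the other values. The instances produced there are therefore never constant, so any equipartition of them is automatically a nontrivial partition.
\end{enumerate}
Option (2) is cleaner, so I would compose the two reductions and point out that the singleton case cannot arise. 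This makes the correspondence between equipartitions of $\mathbf w$ and nontrivial Eulerian lumpings of $G(\mathbf w)$ exact.
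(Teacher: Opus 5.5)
Your proof follows the paper's proof. Membership uses the same certificate check, and hardness uses the same reduction from \textsc{Exists-Equipartition} through $G(\mathbf w)$ via Lemmas~\ref{lem:every-partition-lumpable} and~\ref{lem:eulerian-for-Gw-equivalent-to-partitionable}.

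Your treatment of the discrete partition goes beyond the paper, and you are right to insist on it; the paper's proof glosses over this point. The theorem, and the paper's own certificate with $2\le m\le |V(G)|-1$, exclude the discrete partition. Lemma~\ref{lem:eulerian-for-Gw-equivalent-to-partitionable}, however, only assumes $m\ge 2$. So the paper's final ``if and only if'' fails for, e.g., $\mathbf w=(1,1,1)$. This vector is equipartitionable only via singletons, while every $2$-block quotient of $G(\mathbf w)$ has a vertex of in-degree $4\neq 3$.

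Your option (2) closes this gap cleanly. The instances built in the proof of Theorem~\ref{thm:exists-m-strong-npc} contain both $Q=pB+1$ and values $b_i<Q$, so none of their equipartitions is discrete. The composite map is polynomial because all numbers produced are polynomially bounded.

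Option (1) also works, but you should add why the constant-prime case is the only bad one. A non-constant $\mathbf w$ has no discrete equipartition. A constant $\mathbf w$ with $n$ composite can be grouped into blocks whose size is a proper divisor of $n$. You should also note that the constant-prime case is always a YES-instance, so it should be sent to a fixed YES-instance such as $G((1,1,1,1))$.
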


\begin{proof}
A certificate is a partition $\rho=\{C_1,\dots,C_m\}$ with $2\le m\le |V(G)|-1$. In polynomial time one checks lumpability of $\rho$ (i.e.\ for all blocks $C_a,C_b$ and all $u,u'\in C_a$, the number of edges from $u$ to $C_b$ equals the number of edges from $u'$ to $C_b$). One also checks in polynomial time that $G/\rho$ is Eulerian by comparing in-degree and out-degree of each quotient vertex. For the NP-hardness we give a many-one reduction from \textsc{Exists-Equipartition} (Theorem~\ref{thm:exists-m-strong-npc}). Given $\mathbf w=(w_1,\dots,w_n)\in\mathbb N_{>0}^n$, construct in polynomial time the maximally lumpable digraph
$G(\mathbf w)$. By Lemma~\ref{lem:every-partition-lumpable}, \emph{every} partition $\mathcal C$ of $V(G(\mathbf w))=[n]$ is lumpable,
so the quotient $G(\mathbf w)/\rho_{\mathcal C}$ is always well-defined. By Lemma~\ref{lem:eulerian-for-Gw-equivalent-to-partitionable}, we have that checking whether $G(\mathbf w)/\rho_{\mathcal C}$ is an Eulerian non-trivial $k$-out graph is equivalent to checking whether $\mathbf w$ is equipartitionable. Therefore, there exists a nontrivial lumpable partition $\rho$ with Eulerian quotient if and only if $\mathbf w$ is equipartitionable. This proves NP-hardness.
\end{proof}
Equivalently, in Markov chain terminology, deciding whether there exists a nontrivial lumpable partition $\rho$ such that the lumped chain $P^\rho$ is doubly stochastic is NP-complete. Also in this case holds the same open problem to evaluate the computational complexity for a fixed $k$. 
\begin{remark}
While Theorem~\ref{thm:eulerian-lumping-npcomplete} yields NP-completeness for the existence of an Eulerian lumpable quotient,
the complexity of deciding whether a primitive $k$-out graph admits \emph{any} nontrivial lumpable quotient (without the
Eulerian constraint) seems to be open.
\end{remark}
A partition $\rho$ of $V(G)$ is lumpable if and only if the quotient $G/\rho$ is a well-defined $k$-out graph; equivalently, lumpability is the same as the existence of a surjective morphism of $k$-out graphs
$f:G\to H$ with kernel $\rho$ (so $H\simeq G/\rho$).
Hence asking whether $G$ admits a nontrivial lumpable partition different from the discrete and universal ones is
equivalent to asking whether $G$ has a \emph{nontrivial image} in the category of $k$-out graphs, i.e.\ whether there exists
a surjective morphism $f:G\to H$ with
\[
2\le |V(H)|\le |V(G)|-1.
\]
Theorem~\ref{thm:eulerian-lumping-npcomplete} settles a constrained version of this question: it is NP-complete to decide
whether $G$ has such a nontrivial image $H$ that is moreover Eulerian. This leaves open the problem of framing the worst-case computational complexity of deciding whether a given primitive $k$-out graph has any nontrivial image at all. In particular, by Theorem~\ref{theo: lump=to simple}, it remains open to classify the complexity of recognizing totally simple graphs.
\begin{open}
Is recognizing non-totally-simple $k$-out graphs NP-complete?
Equivalently, is deciding whether a primitive $k$-out graph admits a nontrivial lumpable partition NP-complete?
\end{open}
Moreover, the undirected analogue, deciding whether a connected $k$-regular simple graph admits a nontrivial equitable partition, has been studied recently under the more general notion of \emph{weight-equitable} partitions; see \cite{AbiadHojnyZeijlemakerWEP}.
If $X$ is $k$-regular simple graph and $\overleftrightarrow{X}$ denotes its bidirected orientation (replacing each edge $\{u,v\}$ with the pair $u\to v$ and $v\to u$), then equitable partitions of $X$ are in bijection with lumpable partitions of the $k$-out digraph $\overleftrightarrow{X}$.
In particular, the existence of a nontrivial equitable partition in $X$ is polynomial-time equivalent to the existence of a nontrivial lumpable partition in the restricted class of bidirected $k$-out digraphs.

\section*{Acknowledgements}
This paper is dedicated to the memory of the late Elena Pribavkina, whose work on synchronizing automata, the Černý conjecture, and totally synchronizing digraphs influenced the present study.
\\
The authors are members of the National Research Group GNSAGA
(Gruppo Nazionale per le Strutture Algebriche, Geometriche e le loro
Applicazioni) of INdAM.

\printbibliography

@article{AlRo,
author = {Almeida, J. and Rodaro, E.},
title = {Semisimple Synchronizing Automata and the Wedderburn-Artin Theory},
journal = {International Journal of Foundations of Computer Science},
volume = {27},
number = {02},
pages = {127-145},
year = {2016},
doi = {10.1142/S0129054116400037}
}

@article{BehJoh,
author = {Behague, N. and Johnson, R.},
year = {2022},
pages = {},
title = {Synchronizing Times for $k$-sets in Automata},
volume = {29},
journal = {Electronic Journal of Combinatorics},
doi = {10.37236/9819}
}

@article{BeBePe,
author = {B\'{e}al, M.-P. and Berlinkov, M. V. and Perrin, D.},
title = {A quadratic upper bound on the size of a synchronizing word  in one-cluster automata},
journal = {International Journal of Foundations of Computer Science},
volume = {22},
number = {02},
pages = {277-288},
year = {2011},
doi = {10.1142/S0129054111008039}
}

@article{Perrin_unamb_coded_shifts,
title = {Unambiguously coded shifts},
journal = {European Journal of Combinatorics},
volume = {119},
pages = {103812},
year = {2024},
note = {Dedicated to the memory of Pierre Rosenstiehl},
issn = {0195-6698},
doi = {https://doi.org/10.1016/j.ejc.2023.103812},
author = {Béal, M.-P. and Perrin, D. and Restivo, A.}
}

@article{Ce64,
author = {Černý, J.},
journal = {Matematicko-fyzikálny časopis},
language = {slo},
number = {3},
pages = {208-216},
publisher = {Mathematical Institute of the Slovak Academy of Sciences},
title = {Poznámka k homogénnym experimentom s konečnými automatmi},
volume = {14},
year = {1964},
}

@article{Don,
  title={The \v{C}ern{\'y} Conjecture and 1-Contracting Automata},
  author={Don, H.},
  journal={Electronic Journal of Combinatorics},
  year={2015},
  volume={23},
  pages={3}
}

@article{Dubuc,
	author = {Dubuc, L.},
	title = {Sur les automates circulaires et la conjecture de Černý},
	DOI= "10.1051/ita/1998321-300211",
	journal = {RAIRO-Theor. Inf. Appl.},
	year = 1998,
	volume = 32,
	number ="1-3",
	pages = "21-34",
}

@article{Epp,
author = {Eppstein, D.},
title = {Reset Sequences for Monotonic Automata},
journal = {SIAM Journal on Computing},
volume = {19},
number = {3},
pages = {500-510},
year = {1990},
doi = {10.1137/0219033}
}

@article{Kari,
title = {Synchronizing finite automata on Eulerian digraphs},
journal = {Theoretical Computer Science},
volume = {295},
number = {1},
pages = {223-232},
year = {2003},
note = {Mathematical Foundations of Computer Science},
issn = {0304-3975},
doi = {https://doi.org/10.1016/S0304-3975(02)00405-X},
author = {Kari, J.}
}

@article{Ro18Adv,
 author = {Rodaro, E.},
 title = {Strongly connected synchronizing automata and the language of minimal reset words},
 journal = {Advances in Applied Mathematics},
 %journal = {Adv. Appl. Math.},
 issn = {0196-8858},
 volume = {99},
 pages = {158--173},
 year = {2018},
 language = {English},
 doi = {10.1016/j.aam.2018.04.006}
}

@article{Steinb,
 author = {Steinberg, B.},
 title = {The {\v{C}}ern{\'y} conjecture for one-cluster automata with prime length cycle},
 journal = {Theoretical Computer Science},
 %journal = {Theor. Comput. Sci.},
 issn = {0304-3975},
 volume = {412},
 number = {39},
 pages = {5487--5491},
 year = {2011},
 language = {English},
 doi = {10.1016/j.tcs.2011.06.012}
}

@article{SteinbEJC,
 author = {Steinberg, B.},
 title = {A theory of transformation monoids: combinatorics and representation theory},
 journal = {Electronic Journal of Combinatorics},
 %journal = {Electron. J. Comb.},
 issn = {1077-8926},
 volume = {17},
 number = {1},
 pages = {research paper r164, 56},
 year = {2010},
 language = {English}
}

@article{Trah,
 author = {Trahtman, A. N.},
 title = {The {\v{C}}ern{\'y} conjecture for aperiodic automata},
 journal = {Discrete Mathematics and Theoretical Computer Science. DMTCS},
 %journal = {Discrete Math. Theor. Comput. Sci.},
 issn = {1365-8050},
 volume = {9},
 number = {2},
 pages = {3--10},
 year = {2007},
 language = {English}
}

@incollection{Vo_CIAA07,
 author = {Volkov, M. V.},
 title = {Synchronizing automata preserving a chain of partial orders},
 booktitle = {Implementation and application of automata. 12th international conference, CIAA 2007, Prague, Czech Republic, July 16--18, 2007. Revised selected papers},
 isbn = {978-3-540-76335-2},
 pages = {27--37},
 year = {2007},
 publisher = {Berlin: Springer},
 language = {English},
 doi = {10.1007/978-3-540-76336-9_5}
}

@misc{survey_volkov_2025,
      title={List of Results on the \v{C}ern\'y Conjecture and Reset Thresholds for Synchronizing Automata}, 
      author={Volkov, M. V.},
      year={2025},
      eprint={2508.15655},
      archivePrefix={arXiv},
      primaryClass={cs.FL},
      url={https://arxiv.org/abs/2508.15655}, 
}

@article{AbiadHojnyZeijlemakerWEP,
  author  = {Abiad, A. and Hojny, C. and Zeijlemaker, S.},
  title   = {Characterizing and computing weight-equitable partitions of graphs},
  journal = {Linear Algebra and its Applications},
  volume  = {645},
  pages   = {30--51},
  year    = {2022},
  doi     = {10.1016/j.laa.2022.03.003}
}

@article{adler1977,
  author  = {Adler, R. L. and Goodwyn, L. W. and Weiss, B.},
  title   = {Equivalence of topological Markov shifts},
  journal = {Israel Journal of Mathematics},
  volume  = {27},
  number  = {1},
  pages   = {49--63},
  year    = {1977}
}

@book{ApostolANT,
  author    = {Apostol, T. M.},
  title     = {Introduction to Analytic Number Theory},
  series    = {Undergraduate Texts in Mathematics},
  publisher = {Springer-Verlag},
  address   = {New York},
  year      = {1976}
}

@article{Buchholz1994,
  author  = {Buchholz, P.},
  title   = {Exact and ordinary lumpability in finite Markov chains},
  journal = {Journal of Applied Probability},
  volume  = {31},
  number  = {1},
  pages   = {59--75},
  year    = {1994}
}

@article{DerisaviHermannsSanders2003,
  author  = {Derisavi, S. and Hermanns, H. and Sanders, W. H.},
  title   = {Optimal state-space lumping in Markov chains},
  journal = {Information Processing Letters},
  volume  = {87},
  number  = {6},
  pages   = {309--315},
  year    = {2003},
  doi     = {10.1016/S0020-0190(03)00343-0}
}

@article{Fr1990,
  author  = {Friedman, J.},
  title   = {On the Road Coloring Problem},
  journal = {Proceedings of the American Mathematical Society},
  volume  = {110},
  number  = {4},
  pages   = {1133--1135},
  year    = {1990}
}

@book{GareyJohnson,
  author    = {Garey, M. R. and Johnson, D. S.},
  title     = {Computers and Intractability: A Guide to the Theory of NP-Completeness},
  publisher = {W. H. Freeman},
  year      = {1979}
}

@book{GrossTuckerTGT,
  author    = {Gross, J. L. and Tucker, T. W.},
  title     = {Topological Graph Theory},
  publisher = {Wiley-Interscience},
  year      = {1987}
}

@inproceedings{GuPri,
  author    = {Gusev, V. V. and Pribavkina, E. V.},
  title     = {On Synchronizing Colorings and the Eigenvectors of Digraphs},
  booktitle = {41st International Symposium on Mathematical Foundations of Computer Science (MFCS 2016)},
  series    = {Leibniz International Proceedings in Informatics (LIPIcs)},
  volume    = {58},
  pages     = {48:1--48:14},
  publisher = {Schloss Dagstuhl},
  year      = {2016}
}

@misc{GusevCIRM2015,
  author       = {Gusev, V. V.},
  title        = {On synchronizing colorings of digraphs},
  howpublished = {Talk/slides, CIRM (Centre International de Rencontres Math\'ematiques), ``Rencontres'' 1502},
  year         = {2015},
  url          = {https://www.cirm-math.fr/ProgWeebly/Renc1502/Gusev.pdf}
}

@inproceedings{GusevSzykula15,
  author    = {Gusev, V. V. and Szyku{\l}a, M.},
  title     = {On the number of synchronizing colorings of digraphs},
  booktitle = {Implementation and Application of Automata (CIAA 2015)},
  editor    = {Drewes, F.},
  series    = {Lecture Notes in Computer Science},
  volume    = {9223},
  pages     = {127--139},
  publisher = {Springer},
  year      = {2015}
}

@article{HongMiklos2023,
  author  = {Hong, L. and Mikl\'os, I.},
  title   = {A Markov chain on the solution space of edge colorings of bipartite graphs},
  journal = {Discrete Applied Mathematics},
  volume  = {332},
  pages   = {7--22},
  year    = {2023}
}

@article{Kari2003,
  author  = {Kari, J.},
  title   = {Synchronizing finite automata on Eulerian digraphs},
  journal = {Theoretical Computer Science},
  volume  = {295},
  number  = {1--3},
  pages   = {223--232},
  year    = {2003},
  doi     = {10.1016/S0304-3975(02)00405-X}
}

@book{KemenySnellFMC,
  author    = {Kemeny, J. G. and Snell, J. L.},
  title     = {Finite Markov Chains},
  publisher = {D. Van Nostrand Company},
  year      = {1960}
}

@book{LM,
  author    = {Lind, D. and Marcus, B.},
  title     = {An Introduction to Symbolic Dynamics and Coding},
  publisher = {Cambridge University Press},
  edition   = {2},
  year      = {2021}
}

@article{Marusic1981,
  author  = {Maru{\v{s}}i{\v{c}}, D.},
  title   = {On vertex symmetric digraphs},
  journal = {Discrete Mathematics},
  volume  = {36},
  number  = {1},
  pages   = {69--81},
  year    = {1981}
}

@misc{PribavkinaSandGAL2019,
  author       = {Pribavkina, E.},
  title        = {On totally synchronizing digraphs},
  howpublished = {Talk at SandGAL 2019: Semigroups and Groups, Automata, Logics, Politecnico di Milano, Cremona, Italy},
  year         = {2019}
}

@book{Seneta,
  author    = {Seneta, E.},
  title     = {Non-negative Matrices and Markov Chains},
  series    = {Springer Series in Statistics},
  publisher = {Springer},
  edition   = {2},
  year      = {2006}
}

@article{trahtman2009,
  author  = {Trahtman, A. N.},
  title   = {The road coloring problem},
  journal = {Israel Journal of Mathematics},
  volume  = {172},
  pages   = {51--60},
  year    = {2009},
  doi     = {10.1007/s11856-009-0062-5}
}

@inproceedings{volkov2008,
  author    = {M.V.~Volkov},
  title     = {Synchronizing automata and the \v{C}ern\'y conjecture},
  booktitle = {Language and Automata Theory and Applications (LATA 2008)},
  series    = {Lecture Notes in Computer Science},
  volume    = {5196},
  pages     = {11--27},
  publisher = {Springer},
  year      = {2008},
  doi       = {10.1007/978-3-540-88282-4_4}
}

@article{AraujoEtAlPLMS2016,
  author  = {J.~Ara{\'u}jo and W.~Bentz and P.J.~Cameron and G.~Royle and A.~Schaefer},
  title   = {Primitive groups, graph endomorphisms and synchronization},
  journal = {Proceedings of the London Mathematical Society},
  series  = {3},
  volume  = {113},
  number  = {6},
  pages   = {829--867},
  year    = {2016},
  doi     = {10.1112/plms/pdw040}
}

@article{AraujoCameronJCTB2014,
  author  = {J.~Ara{\'u}jo and P.J.~Cameron},
  title   = {Primitive groups synchronize non-uniform maps of extreme ranks},
  journal = {Journal of Combinatorial Theory, Series B},
  volume  = {106},
  pages   = {98--114},
  year    = {2014},
  doi     = {10.1016/j.jctb.2014.01.006}
}

@article{AraujoCameronSteinberg2017,
  author  = {J.~Ara{\'u}jo and P.J.~Cameron and B.~Steinberg},
  title   = {Between primitive and $2$-transitive: Synchronization and its friends},
  journal = {EMS Surveys in Mathematical Sciences},
  volume  = {4},
  number  = {2},
  pages   = {101--184},
  year    = {2017},
  doi     = {10.4171/EMSS/4-2-1}
}

\end{document}